\newtheorem{theorem}{Theorem}
\newtheorem{corollary}{Corollary}
\newtheorem*{corollary*}{Corollary}
\newtheorem*{theorem*}{Theorem}
\newtheorem*{conjecture*}{Conjecture}
\newtheorem{proposition}[theorem]{Proposition}
\newtheorem*{proposition*}{Proposition}
\newtheorem*{example*}{Example}
\newtheorem{question}{Question}
\newtheorem{lemma}[theorem]{Lemma}
\newtheorem*{lemma*}{Lemma}
\newtheorem{definition}{Definition}
\def\Z{\mathbb Z}
\def\V{\mathcal{V}}
\def\W{\mathcal{W}}
\def\X{\mathcal{X}}
\def\Y{\mathcal{Y}}
\def\Z{\mathcal{Z}}
\def\E{\mathcal{E}}
\def\MM{\mathbb{M}}
\DeclareMathOperator{\Tr}{Tr}
\DeclareMathOperator{\Hess}{Hess}
\newcommand{\todo}[1]{\textbf{\textcolor{red}{[#1]}}}
\title[K\"ahler-Ricci Flow and Anti-Bisectional Curvature]{K\"ahler-Ricci Flow preserves negative anti-bisectional curvature}
\author[Iowa State University]{Gabriel Khan}
\email{gkhan@iastate.edu}
\author[Chongqing Normal University]{Fangyang Zheng}
\email{franciszheng@yahoo.com}
\begin{document}

\maketitle

\begin{abstract}
   In recent work (Pure Appl. Anal. 2 (2020), 397-426 \cite{KhanZhang}), the first named author and J. Zhang found a connection between the regularity theory of optimal transport and the curvature of K\"ahler manifolds. In particular, we showed that the Ma-Trudinger-Wang (MTW) tensor for a cost function $c(x,y)=\Psi(x-y)$ can be understood as the anti-bisectional curvature of an associated K\"ahler metric defined on a tube domain. Here, the anti-bisectional curvature is defined as $R(\X, \overline \Y,\X,\overline \Y) $ where $\X$ and $\Y$ are polarized $(1,0)$ vectors and $R$ is the curvature tensor. The correspondence between the anti-bisectional curvature and the MTW tensor provides a meaningful sense in which the anti-bisectional curvature can have a sign (i.e., be positive or negative).
    
    In this paper, we study the behavior of the anti-bisectional curvature under K\"ahler-Ricci flow. Somewhat unexpectedly, we find that \textit{non-positive} anti-bisectional curvature is preserved under the flow. In complex dimension two, we also show that \textit{non-negative} orthogonal anti-bisectional curvature (i.e., the MTW(0) condition) is preserved under the flow. We provide several applications of these results --- in complex geometry, optimal transport, and affine geometry.

\end{abstract}

\section{Introduction}

For the past four decades, Hamilton's Ricci flow \cite{Hamiltonthreemanifolds} has played a central role in geometric analysis. Starting with an initial Riemannian manifold $(M,g)$, this flow evolves the metric according to its Ricci curvature:
\begin{equation}\label{Ricciflow}
    \frac{\partial g}{\partial t}  = - 2 \, \textrm{Ric}(g).
\end{equation}
This acts as a reaction-diffusion equation for the curvature and can be used to deform a Riemannian metric towards a canonical geometry. 

When the underlying manifold is K\"ahler, the Ricci flow preserves the complex structure and the K\"ahlerity of the metric \cite{CaoDeformation}. Furthermore, it is possible to write the flow in terms of the K\"ahler potentials $\Psi(z,t)$ in a holomorphic coordinate chart $\{z^i \}_{i=1}^n$. Doing so, the potential evolves according to the parabolic complex Monge-Amp\'ere equation 
\begin{equation} \label{KahlerRicciflow}
\frac{\partial \Psi}{\partial t}=2\log \left( \operatorname{det}\left [ \frac{\partial^{2} \Psi}{\partial z^{i} \partial \bar{z}^{j}}\right] \right).
\end{equation}
For K\"ahler manifolds, this allows us to study the Ricci flow, which is originally a weakly parabolic system of equations, in terms of a single fully nonlinear parabolic equation.

In this paper, we study the K\"ahler-Ricci flow on tube domains (i.e. domains of the form $(\Omega + \sqrt{-1} \mathbb{R}^n) \subset \mathbb{C}^n$ where $\Omega$ is a convex domain in $\mathbb{R}^n)$, or more generally on the tangent bundles of Hessian manifolds.
Our focus is on the behavior of the ``anti-bisectional curvature," which is defined as
\begin{equation}
    \mathfrak{A}(\V,\W)= R(\V,\overline{\W},\V,\overline{\W})
\end{equation}
for $(1,0)$-vectors $\V$ and $\W$.

By considering metrics which are translation symmetric in the imaginary directions and restricting our attention to polarized holomorphic vectors, there is a meaningful sense in which the anti-bisectional curvature can be positive or negative. Here, we say that a $(1,0)$-vector $\V$ is \textit{polarized} if $ \V = v^i \frac{\partial}{\partial z^i}, $
where the coefficients $v^i$ are real numbers that depend only on the point $x \in \Omega$ (in the holomorphic coordinates $ \{ z^i = x^i + \sqrt{-1} y^i \}_{i=1}^n$ with $x \in \Omega$ and  $y \in \mathbb{R}^n$). For a more precise definition of polarized vectors and signed anti-bisectional curvature, see Subsection \ref{Anti-bisectionalcurvature}. We find that \textit{negativity} of the anti-bisectional curvature is preserved under K\"ahler-Ricci flow. With the exception of negative Gaussian curvature for Riemann surfaces, every other curvature condition which was previously known to be preserved under Ricci flow is a positivity condition, so this result is somewhat unexpected. 

\begin{theorem} \label{Main theorem}
Let $(T \Omega, \omega_0)$ be a complete K\"ahler manifold with bounded curvature and non-positive anti-bisectional curvature. Consider the K\"ahler-Ricci flow $(T \Omega,\omega_t)$ with initial metric $\omega_0$. For all time that the flow exists, the anti-bisectional curvature remains non-positive.
\end{theorem}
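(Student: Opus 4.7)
The plan is to apply Hamilton's parabolic tensor maximum principle to the anti-bisectional curvature along the K\"ahler-Ricci flow. As a preliminary step, I would verify that translation invariance in the imaginary directions (the tube/Hessian structure) is preserved along the flow, since both the initial data and the evolution equation \eqref{KahlerRicciflow} respect this symmetry. Consequently, the notion of a polarized $(1,0)$-vector persists for all time, and $\mathfrak{A}(\V,\W)$ remains a well-defined real scalar on the polarized sub-bundle.

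Starting from the Hamilton--Cao evolution equation for the full curvature tensor under K\"ahler-Ricci flow and contracting with $v^i\overline{w^j} v^k \overline{w^l}$ (where $v^i,w^i\in\mathbb{R}$) produces an equation of the schematic form
\begin{equation*}
    \left(\frac{\partial}{\partial t} - \Delta\right) \mathfrak{A}(\V, \W) = Q(\V, \W),
\end{equation*}
where $Q$ is a quadratic algebraic expression in the curvature tensor. I would then argue by contradiction, supposing there is a first time $t_0$ and a point $p$ at which $\mathfrak{A}(\V_0,\W_0)=0$ for some polarized $\V_0,\W_0$. After extending these to local polarized fields with vanishing covariant derivatives at $p$, and using the standard perturbation trick of replacing $\mathfrak{A}$ by $\mathfrak{A} - \epsilon e^{Kt}$ together with a Shi-type cutoff argument that exploits the bounded-curvature hypothesis to handle the noncompact setting, one arrives at the inequality $Q(\V_0,\W_0)\geq 0$ at $(p,t_0)$.

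The heart of the proof, and the step I expect to be the main obstacle, is the converse algebraic inequality: whenever $\mathfrak{A}(\V',\W')\leq 0$ on all polarized pairs, one must show $Q\leq 0$ at any polarized pair where $\mathfrak{A}$ vanishes. The natural strategy is to rearrange the contracted quadratic terms such as $R_{i\bar j p\bar q}R_{q\bar p k\bar l}\,v^i w^j v^k w^l$ into linear combinations of products $\mathfrak{A}(\V',\W')\,\mathfrak{A}(\V'',\W'')$ over suitable polarized pairs, plus manifestly non-positive perfect-square terms that vanish at the touching point. The tube/Hessian symmetry is crucial here: in polarized real coordinates all Christoffel symbols and curvature components are real, so the ``bad'' bisectional-type cross terms that obstruct such a decomposition in the general K\"ahler setting should either cancel or collapse onto polarized contractions. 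Once this algebraic step is established, combining $Q\leq 0$ with the maximum-principle inequality $Q\geq 0$ forces equality at the touching point; letting $\epsilon \to 0$ in the perturbation argument then yields Theorem~\ref{Main theorem}.
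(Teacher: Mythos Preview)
Your overall architecture matches the paper's: Uhlenbeck's trick, Shi's maximum principle for complete bounded-curvature flows, and reduction to an algebraic null-vector condition for the reaction term $Q$. You have also correctly located the difficulty. However, your proposed mechanism for the algebraic step---rearranging $Q$ as a linear combination of products $\mathfrak{A}(\V',\W')\mathfrak{A}(\V'',\W'')$ plus perfect squares---is not how the argument actually closes, and it is not clear it can be made to work in that form. After contraction one has, in a polarized unitary frame,
\[
\tfrac12 Q(\V,\overline{\W},\V,\overline{\W}) \;=\; 2\sum_{p,q} A_{\V\overline{\W}\E_p\overline{\E_q}}A_{\V\overline{\W}\E_q\overline{\E_p}} \;-\; \sum_{p,q} A_{\V\overline{\E_p}\V\overline{\E_q}}A_{\E_p\overline{\W}\E_q\overline{\W}}.
\]
The second sum is $\Tr(M_1 M_2)$ with $M_1,M_2\ge 0$, hence non-negative; but the first sum is $\Tr(N^2)$ for a matrix $N$ that is \emph{not} symmetric and has no sign, so it is not a sum of squares and cannot be discarded. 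The hypothesis $\mathfrak{A}\le 0$ alone gives no direct control on these off-diagonal components $A_{\V\overline{\W}\E_p\overline{\E_q}}$.

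The missing idea is that one must exploit not just the sign of $\mathfrak{A}$ but the fact that $(\V,\W)$ is a \emph{maximum} of $\mathfrak{A}$ over polarized pairs. The second variation $\frac{d^2}{ds^2}\big|_{s=0}\mathfrak{A}(\V+s\Y,\W+s\X)\le 0$ yields that the $2n\times 2n$ block matrix $\begin{pmatrix} M_1 & N \\ N^T & M_2\end{pmatrix}$ is positive semidefinite. One then needs the (non-obvious) matrix lemma, due to Hamilton and used by Cao, that semidefiniteness of this block matrix forces $\Tr(M_1 M_2 - N^2)\ge 0$; this is proved by conjugating by the symplectic matrix $J$ and observing that $\Tr(G_1 J G_1 J^T)\ge 0$. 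Only with this lemma does $Q\le 0$ follow. Your outline should therefore replace the vague ``rearrangement'' strategy with: (i) first- and second-variation identities at the touching pair, and (ii) the block-matrix trace inequality.
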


We also study the orthogonal anti-bisectional curvature (i.e., the antibisectional curvature restricted to orthogonal polarized vectors) and show that in complex dimension two, \textit{positivity} of this quantity is also preserved under the flow.

\begin{theorem}\label{SurfaceswithNOAB}
Suppose that $ \Omega \subset \mathbb{R}^2 $ is a convex domain and $\Psi :\Omega \to \mathbb{R}$ is a strongly convex function so that the associated K\"ahler manifold $(T \Omega, \omega_0)$ 
\begin{enumerate}
    \item is complete,
    \item has bounded curvature, and
    \item has non-negative orthogonal anti-bisectional curvature.
\end{enumerate}
 Then the orthogonal anti-bisectional curvature remains non-negative along K\"ahler-Ricci flow. 
\end{theorem}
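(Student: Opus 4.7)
The plan is to apply Hamilton's tensor maximum principle at the first spacetime point where the orthogonal anti-bisectional curvature could touch zero. As in the proof of Theorem \ref{Main theorem}, the imaginary-translation invariance of $\omega_0$ on the tube domain is preserved under the K\"ahler-Ricci flow, so polarized vectors, and hence $\mathfrak{A}$, remain well-defined for every $t$ on which the flow is defined. Combined with the bounded-curvature and completeness hypotheses, which permit Shi-type derivative estimates and the standard tensor maximum principle on complete non-compact manifolds, this lets me argue by contradiction: suppose there is a first time $t_0 > 0$ and a point $x_0 \in T\Omega$ where some orthonormal polarized pair $(\V_1, \V_2)$ satisfies $\mathfrak{A}(\V_1, \V_2) = R_{1\bar{2}1\bar{2}} = 0$, while $\mathfrak{A} \geq 0$ on every orthonormal polarized pair at every earlier time.

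Next I would extend $\V_1, \V_2$ to an orthonormal polarized frame in a spacetime neighborhood of $(x_0, t_0)$ that is covariantly constant at that point, and contract the standard K\"ahler--Ricci flow evolution equation
\begin{equation*}
(\partial_t - \Delta) R_{i\bar{j}k\bar{l}} = R_{i\bar{j}p\bar{q}} R_{q\bar{p}k\bar{l}} - R_{i\bar{p}k\bar{q}} R_{p\bar{j}q\bar{l}} + R_{i\bar{l}p\bar{q}} R_{q\bar{p}k\bar{j}} - \tfrac{1}{2}\bigl(R_{i\bar{p}} R_{p\bar{j}k\bar{l}} + R_{p\bar{j}} R_{i\bar{p}k\bar{l}} + R_{k\bar{p}} R_{i\bar{j}p\bar{l}} + R_{p\bar{l}} R_{i\bar{j}k\bar{p}}\bigr)
\end{equation*}
with $V_1^i \overline{V_2}^j V_1^k \overline{V_2}^l$. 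At $(x_0, t_0)$, the spatial minimality and the choice of extension give $\Delta \mathfrak{A} \geq 0$ and $\partial_t \mathfrak{A} \leq 0$, so preservation reduces to showing that the resulting right-hand side --- a quadratic expression in the components of $R$ evaluated in the frame $\{\V_1, \V_2\}$ --- is non-negative.

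The decisive step is the algebraic analysis of this quadratic form in complex dimension two. In the orthonormal polarized frame $\{\V_1, \V_2\}$, the independent curvature components (modulo symmetries) are the holomorphic sectional curvatures $R_{1\bar{1}1\bar{1}}, R_{2\bar{2}2\bar{2}}$, the bisectional $R_{1\bar{1}2\bar{2}}$, the orthogonal antibisectional $R_{1\bar{2}1\bar{2}}$, and the mixed $R_{1\bar{1}1\bar{2}}, R_{2\bar{2}2\bar{1}}$. The K\"ahler first Bianchi identity gives $R_{1\bar{2}2\bar{1}} = R_{1\bar{1}2\bar{2}}$, and the additional symmetry $R_{i\bar{j}k\bar{l}} = R_{i\bar{k}j\bar{l}}$ inherited from $\Psi = \Psi(x)$ collapses still more. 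Substituting these identities together with $R_{1\bar{2}1\bar{2}} = 0$ and the first-order minimality conditions obtained from rotating $(\V_1, \V_2)$ within the polarized $2$-plane, I expect the right-hand side to reorganize into a manifestly non-negative expression --- conjecturally a sum of squares of the mixed components $R_{1\bar{1}1\bar{2}}$ and $R_{2\bar{2}2\bar{1}}$ plus a non-negative multiple of $(R_{1\bar{1}2\bar{2}})^2$.

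The main obstacle is precisely this reorganization: the task is to show that no indefinite cross-term survives the substitutions, and this is exactly where the restriction to complex dimension two is essential. In dimension two, the orthogonal complement of a polarized vector inside the polarized subspace is one-dimensional, so the rotational minimality conditions supply exactly enough linear identities to eliminate the potentially dangerous off-diagonal components. In higher complex dimensions the reaction would additionally involve components such as $R_{1\bar{2}1\bar{3}}$ and $R_{1\bar{2}3\bar{4}}$, which cannot be controlled from the vanishing of $R_{1\bar{2}1\bar{2}}$ alone; this is the structural reason Theorem \ref{SurfaceswithNOAB} is restricted to complex surfaces.
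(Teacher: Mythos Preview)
Your approach is correct and essentially the same as the paper's: both reduce to verifying the null-vector condition for the reaction term at a spacetime point where the orthogonal anti-bisectional curvature first touches zero on some orthonormal polarized pair. The paper applies Uhlenbeck's trick from the outset so that the Ricci terms disappear and one is left with showing $Q(A)_{1\bar 2 1\bar 2}\ge 0$ for the pure quadratic $Q$ of \eqref{DefinitionofQ}; your ``covariantly constant at the point'' extension plays the same role but you would still need to say how the frame evolves in time to absorb the Ricci terms.

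The step you flag as the ``main obstacle'' is the entire content of the paper's argument, and it is short enough that you should simply carry it out rather than leave it as an expectation. In the polarized unitary frame with $\V=\E_1$, $\W=\E_2$, the first-order minimality condition comes from differentiating
\[
s\longmapsto A\bigl((e_1+se_2)^{pol.},\overline{(e_2-se_1)^{pol.}},(e_1+se_2)^{pol.},\overline{(e_2-se_1)^{pol.}}\bigr)
\]
at $s=0$ (note $e_1+se_2\perp e_2-se_1$), yielding the single identity $A_{1\bar 2 1\bar 1}=A_{1\bar 2 2\bar 2}$. Expanding $Q(A)_{1\bar 2 1\bar 2}=4A_{1\bar 2 p\bar q}A_{q\bar p 1\bar 2}-2A_{1\bar p 1\bar q}A_{p\bar 2 q\bar 2}$ over $p,q\in\{1,2\}$, every term containing $A_{1\bar 2 1\bar 2}=0$ drops out, and the identity above collapses the remainder to exactly $4(A_{1\bar 1 1\bar 2})^2\ge 0$. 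So your conjectured reorganization is correct, but the answer is a single perfect square; no $(R_{1\bar 1 2\bar 2})^2$ term appears, and the extra Hessian symmetry $R_{i\bar j k\bar\ell}=R_{i\bar k j\bar\ell}$ you invoke is not needed. The single rotational first-variation identity does all the work, and its failure to supply enough relations in $n\ge 3$ is precisely why the paper notes that $Q$ then acquires negative-definite pieces.
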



\subsection{Applications in complex geometry}

For tube domains, Theorem \ref{Main theorem} provides a new way to study the relationship between the holomorphic sectional curvature and Ricci curvature, which is an important question in complex geometry. Recently, Wu and Yau \cite{WuYau} showed that on any K\"ahler manifold $(\mathbb{M},\omega_0)$ with negatively pinched holomorphic sectional curvature, there exists a K\"ahler-Einstein metric of negative scalar curvature which is uniformly equivalent to the original metric $\omega_0$. However, the following question remains open.

\begin{question}
When is it the case that a K\"ahler manifold admits a K\"ahler-Einstein metric with negative holomorphic sectional curvature?
\end{question}


For general K\"ahler manifolds, this is a very difficult problem. Establishing the existence of a K\"ahler-Einstein metric is already a non-trivial question (see, e.g., \cite{YauCalabi, YauRicci}), and the existence theorems do not provide control over the sign of the holomorphic sectional curvature. 

Although negative anti-bisectional curvature does not imply that the holomorphic sectional curvature is negative for all $(1,0)$-vectors, it does imply that the holomorphic sectional curvature of polarized vectors remains non-positive under the flow. Combining Theorem \ref{Main theorem} with work of Tong studying the convergence of negatively pinched metrics \cite{Tong}, we show the following result.

\begin{corollary} \label{NegativeantibisectionalKahlerEinstein}
Suppose that $T \Omega$ admits a complete K\"ahler metric $\omega_0$ whose holomorphic sectional curvature is negatively pinched and whose anti-bisectional curvature is non-positive. Then the complete K\"ahler-Einstein metric on $T \Omega$ with scalar curvature $-2$ has non-positive anti-bisectional curvature.
\end{corollary}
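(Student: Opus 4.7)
The natural strategy is to combine Theorem \ref{Main theorem} with Tong's convergence result \cite{Tong} via the K\"ahler-Ricci flow, deducing the sign of $\mathfrak{A}$ on $\omega_{KE}$ as the pointwise limit of its sign along the flow. First, I would run the (unnormalized) K\"ahler-Ricci flow $\omega_t$ on $T\Omega$ starting from $\omega_0$. The negative pinching of the holomorphic sectional curvature implies the curvature of $\omega_0$ is bounded, so all hypotheses of Theorem \ref{Main theorem} are in force, and the anti-bisectional curvature of $\omega_t$ remains non-positive throughout the maximal interval of existence.

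Next, I would invoke Tong's theorem: because $\omega_0$ has negatively pinched holomorphic sectional curvature, the flow exists for all time and, after an appropriate rescaling $\tilde{\omega}_t = \lambda(t)\,\omega_t$ (equivalently, after passing to the normalized K\"ahler-Ricci flow), $\tilde{\omega}_t$ converges smoothly on compact subsets to the unique complete K\"ahler-Einstein metric $\omega_{KE}$ of scalar curvature $-2$ on $T\Omega$. Homothetic rescaling by a positive constant preserves the sign of the $(0,4)$-curvature tensor, hence preserves non-positivity of $\mathfrak{A}$. Thus each $\tilde{\omega}_t$ has non-positive anti-bisectional curvature, and smooth convergence $\tilde{\omega}_t \to \omega_{KE}$ yields pointwise convergence of the curvature tensor. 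Since non-positivity of $\mathfrak{A}(\V,\W)$ for every pair of polarized vectors is a closed condition, it passes to the limit and $\omega_{KE}$ has non-positive anti-bisectional curvature.

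\textbf{Main obstacle.} The nontrivial part is verifying that Tong's convergence theorem genuinely applies in the non-compact tube-domain setting (rather than the compact or bounded-geometry settings often assumed) and that the convergence is strong enough --- at least $C^2$ on compact subsets --- for the curvature tensor to converge pointwise. A secondary, more routine, concern is confirming that the bounded-curvature hypothesis of Theorem \ref{Main theorem} persists along the flow; this should follow from standard Shi-type estimates once the initial curvature is bounded, but needs to be checked in the complete non-compact setting. Finally, one must be mindful that the anti-bisectional curvature is defined using polarized $(1,0)$-vectors, so the closedness argument at the limit should be carried out using the natural polarized frame on $T\Omega$, which is preserved by the translation symmetry that is inherited throughout the flow.
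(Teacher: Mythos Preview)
Your proposal is correct and matches the paper's approach exactly: the paper states that Corollary \ref{NegativeantibisectionalKahlerEinstein} ``follows immediately'' from combining Theorem \ref{Main theorem} with Tong's convergence theorem, and you have filled in precisely the details (rescaling invariance, closedness under limits, preservation of the polarization) that make this immediate deduction rigorous. The concerns you raise as obstacles are handled in the paper's background sections --- Tong's theorem is stated for complete (not necessarily compact) K\"ahler manifolds, and the Chen--Zhu uniqueness result ensures the $\mathbb{R}^n$-symmetry (hence the polarized structure) persists along the flow.
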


When $\Omega$ is a bounded and strongly convex domain, it is possible to sharpen this estimate and show that the polarized holomorphic sectional curvature of the K\"ahler-Einstein metric is negatively pinched.

\begin{corollary} \label{NegativecostcurvedKahlerEinstein}
Suppose that $\Omega$ is a smooth, bounded, and strongly convex domain and that $(T \Omega, \omega_0)$ is a complete K\"ahler metric with negative anti-bisectional curvature and negatively pinched holomorphic sectional curvature. Then the complete K\"ahler-Einstein metric on $T \Omega$ has negative cost-curvature. 
\end{corollary}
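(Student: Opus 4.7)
The plan is to combine Corollary \ref{NegativeantibisectionalKahlerEinstein} with a strong maximum principle applied to the evolution equation of the anti-bisectional curvature along the normalized K\"ahler-Ricci flow. Corollary \ref{NegativeantibisectionalKahlerEinstein} already delivers $\mathfrak{A} \le 0$ on the K\"ahler-Einstein limit $\omega_\infty$; the work here is to upgrade non-positivity to uniform strict negativity $\mathfrak{A} \le -c < 0$, which is what ``negative cost-curvature'' requires.

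First I would re-run the construction from Corollary \ref{NegativeantibisectionalKahlerEinstein}. Starting from the given $\omega_0$ (which now has \emph{strictly} negative anti-bisectional curvature and negatively pinched holomorphic sectional curvature), the normalized K\"ahler-Ricci flow exists for all time and converges smoothly to the complete K\"ahler-Einstein metric $\omega_\infty$; convergence is supplied by Tong's work, which applies thanks to the negative pinching of the holomorphic sectional curvature on the tube $T\Omega$ over the bounded, strongly convex base $\Omega$. Theorem \ref{Main theorem} then yields $\mathfrak{A} \le 0$ along the whole flow, and hence on $\omega_\infty$.

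To pass from $\le 0$ to pointwise $< 0$, I would apply a strong maximum principle to the evolution of $\mathfrak{A}(\V, \W)$. Along the flow this quantity satisfies a parabolic equation of the schematic form $(\partial_t - \Delta)\mathfrak{A}(\V, \W) = Q(R)$, where $Q(R)$ is the quadratic reaction term whose sign underpins Theorem \ref{Main theorem}: it is non-positive at any polarized pair where $\mathfrak{A}$ touches zero while $\mathfrak{A} \le 0$ everywhere. Consequently, if $\mathfrak{A}(\V, \W)$ were to vanish at an interior point $(p, t_1)$ for some polarized pair $(\V, \W)$, the strong maximum principle would propagate this zero in a parabolic neighborhood, ultimately contradicting the strict negativity of $\mathfrak{A}$ at time $t=0$.

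Finally, to upgrade pointwise strict negativity to the uniform pinching required for negative cost-curvature, I would use boundedness and strong convexity of $\Omega$. The translation symmetry of $\omega_\infty$ in the imaginary directions reduces the analysis to $\Omega$ itself, and strong convexity yields sufficiently sharp Cheng--Yau-type boundary asymptotics for $\omega_\infty$ to provide the compactness needed to turn pointwise strict negativity on $\Omega$ into a uniform bound $\mathfrak{A} \le -c < 0$. The main obstacle I anticipate is exactly this non-compactness: making both the parabolic strong maximum principle and the uniform pinching rigorous on $T\Omega$ will require carefully chosen barrier functions or cut-offs, designed using the Cheng--Yau asymptotics near $\partial\Omega$ together with the exponential decay of higher curvature derivatives coming from Tong's convergence.
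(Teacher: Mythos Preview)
Your proposal rests on a misreading of Definition~\ref{Negative cost-curvature}. Negative cost-curvature is \emph{not} the assertion that $\mathfrak{A}(\V,\W)\le -c<0$ for all polarized pairs; it is the conjunction of (i) non-positive anti-bisectional curvature and (ii) a uniform negative upper bound $H(\X)\le K\|\X\|^4$ only for \emph{polarized} holomorphic vectors $\X$ (i.e., negatively pinched polarized holomorphic sectional curvature). Condition (i) is already supplied by Corollary~\ref{NegativeantibisectionalKahlerEinstein}; the new content is (ii). In fact, the paper explains just before Definition~\ref{Negative cost-curvature} that strongly negative anti-bisectional curvature in your sense is unknown in dimension $>1$ and that the orthogonal anti-bisectional curvature of the K\"ahler--Einstein metric attenuates near the strictly pseudoconvex boundary (the metric becomes asymptotically complex hyperbolic there). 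So the goal you set yourself is the wrong one, and for the orthogonal part it is likely false.

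Once the target is corrected to ``polarized $H$ is negatively pinched,'' your parabolic strong-maximum-principle strategy is also not the route the paper takes, and your acknowledged difficulty with non-compactness is real. The paper instead argues directly on the stationary K\"ahler--Einstein metric. Near the boundary it invokes the asymptotic description of the K\"ahler--Einstein metric on strictly pseudoconvex domains (van Coevering), which gives negatively pinched holomorphic sectional curvature outside a relatively compact $\Omega^\lambda\subset\Omega$. Inside $T\Omega^\lambda$ one supposes for contradiction that $H(\E_1)=0$ at some point for a polarized unit vector $\E_1$. Since $\omega_{K.E.}$ is fixed by the flow, $\partial_t \widetilde H(\E_1)=0$; feeding this into the curvature evolution and using the sharper inequality \eqref{strongernegativityofQ} (i.e., \eqref{Quadraticdominance}) forces $\widetilde R(\E_1,\overline{\E_1},\E_i,\overline{\E_j})=0$ for all $i,j$, hence $\operatorname{Ric}(\E_1,\overline{\E_1})=0$ via Lemma~\ref{BergersRicciformula}, contradicting the Einstein condition with negative scalar curvature. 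The compactness you need is thus obtained from the boundary asymptotics, and the ``strong maximum principle'' step is replaced by an algebraic rigidity argument at a single point of the Einstein metric.
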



Negative cost-curvature is a stronger version of non-positive anti-bisectional curvature. The etymology of this definition comes from optimal transport, and is described in Subsection \ref{Optimaltransportbackground}.

\begin{definition}[Negative cost-curvature]\label{Negative cost-curvature}
A metric on a tube domain is negatively cost-curved if its anti-bisectional curvature is non-positive and its holomorphic sectional curvature satisfies $H(\X) \leq K \|\X\|^4$ for some $K < 0$ whenever $\X$ is a polarized holomorphic vector.
\end{definition}

For compact Hessian manifolds, it is possible to prove a version of Corollary \ref{NegativecostcurvedKahlerEinstein} under the assumption that the first affine Chern class is negative, without assuming that the holomorphic sectional curvature of the tangent bundle is negatively pinched (see Definition 6 \cite{PuechmorelTo} for a definition of affine Chern classes).

\begin{corollary}\label{CompactHessianEinstein}
Let $(M, g_0, D)$ be a compact Hessian manifold. Suppose that the associated K\"ahler metric $\omega_0$ on $T M$ has non-positive anti-bisectional curvature and that the first affine Chern class of $M$ is negative. Then, under the normalized K\"ahler-Ricci flow, $T M$ converges to a K\"ahler-Einstein metric which is negatively cost-curved.
\end{corollary}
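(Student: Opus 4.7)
The plan is to reduce to a problem on the compact base manifold $M$, invoke a Hessian analog of Cao's convergence theorem, apply Theorem \ref{Main theorem} to pass non-positive anti-bisectional curvature to the limit, and then upgrade non-positivity to strict negative cost-curvature via a maximum principle argument on the K\"ahler-Einstein limit.

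First I would observe that since $\omega_0$ on $TM$ is translation-invariant in the fiber directions (coming from the Hessian structure on $M$) and equation \eqref{KahlerRicciflow} preserves this invariance, the normalized K\"ahler-Ricci flow descends to a flow of Hessian metrics on the compact base $M$, i.e.\ to a real Monge--Amp\`ere type flow for the Hessian potentials. Under the assumption that the first affine Chern class is negative, I would appeal to the Hessian analog of Cao's convergence theorem (in the spirit of Puechmorel--To \cite{PuechmorelTo}) to obtain long-time existence and smooth convergence of the normalized flow to a limiting K\"ahler-Einstein metric $\omega_\infty$ on $TM$ with $\textrm{Ric}(\omega_\infty) = -\omega_\infty$.

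Next, applying Theorem \ref{Main theorem} at each finite time and passing to the limit via the smooth convergence, I would conclude that $\omega_\infty$ has non-positive anti-bisectional curvature; in particular, the polarized holomorphic sectional curvature $H(\X)$ is non-positive. To upgrade this to the strict bound $H(\X) \leq K\|\X\|^4$ for some uniform $K<0$ required by Definition \ref{Negative cost-curvature}, I would use that the translation invariance of $\omega_\infty$ makes the bundle of unit polarized vectors essentially compact over $M$, so by continuity and homogeneity it suffices to rule out $H(\X_0) = 0$ at a single unit polarized $\X_0$. If such $\X_0$ existed, it would be a maximum of a non-positive function; a strong maximum principle argument applied to the elliptic equation satisfied by $H$ on the Einstein limit, using the preserved non-positive anti-bisectional curvature to obtain the correct sign, would force $H$ to vanish along polarized vectors in a neighborhood, ultimately contradicting the negativity of the Einstein constant of $\omega_\infty$ (as the scalar curvature is proportional to an average of bisectional curvatures).

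The main obstacle is this last step, namely making the strong maximum principle argument rigorous. The equation governing $H$ on a K\"ahler-Einstein manifold couples non-trivially to the full curvature tensor, so one must carefully exploit the preserved non-positivity of the entire anti-bisectional curvature (not merely of $H$) to secure the correct sign in the second-order term. If the elliptic approach proves too rigid, an alternative I would pursue is to run the maximum principle parabolically along the flow: track the evolution of $\sup_{\|\X\|=1} H(\X)$ over polarized unit vectors and argue that, once the flow is sufficiently close to the Einstein limit, this supremum becomes strictly negative and remains bounded away from zero, so that $\omega_\infty$ inherits the strict upper bound characterizing negative cost-curvature.
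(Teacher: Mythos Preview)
Your overall structure matches the paper: reduce to the Hesse--Koszul flow on the compact base, invoke Puechmorel--T\^o \cite{PuechmorelTo} for long-time existence and smooth convergence to the K\"ahler--Einstein limit $\omega_\infty$, apply Theorem~\ref{Main theorem} along the flow to pass non-positive anti-bisectional curvature to $\omega_\infty$, and then argue by contradiction at a putative maximum $H(\E_1)=0$ using compactness of $M$.

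The divergence is in the final step. You propose a strong maximum principle on the Einstein limit to propagate $H\equiv 0$ over a region and then contradict the negative Einstein constant via averaging. The paper instead works purely at the single point. Since $\omega_\infty$ is a fixed point of the flow, $\partial_t \widetilde H(\E_1)=0$; feeding this into the curvature evolution \eqref{Curvatureevolutionsimple} and using the sharpened null-vector inequality \eqref{strongernegativityofQ} (equivalently \eqref{Quadraticdominance}) proved inside Theorem~\ref{Main theorem}, one obtains $\widetilde R_{1\overline p 1\overline q}\widetilde R_{p\overline 1 q\overline 1}\le 0$. A second application of \eqref{Quadraticdominance}, together with the extra symmetry $\widetilde R(\E_1,\overline\E_1,\E_i,\overline\E_j)=\widetilde R(\E_1,\overline\E_1,\E_j,\overline\E_i)$ specific to the K\"ahler--Sasaki setting, forces every $\widetilde R(\E_1,\overline\E_1,\E_i,\overline\E_j)$ to vanish at that point. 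Then Lemma~\ref{BergersRicciformula} gives $\mathrm{Ric}(\E_1,\overline\E_1)=0$, contradicting the Einstein condition. No propagation is required, and the hard work was already done in the proof of Theorem~\ref{Main theorem}.

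Your elliptic or parabolic maximum-principle route could plausibly be pushed through, but it is heavier and you have correctly flagged it as the obstacle: the evolution of $H$ couples to the full curvature tensor, and it is precisely the inequality \eqref{Quadraticdominance} plus the Hessian symmetry that supply the needed sign control. The paper's pointwise argument bypasses the propagation issue entirely.
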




\subsubsection{Instantaneous control of the metric}
Even without the assumption that the holomorphic sectional curvature of $T \Omega$ is negatively pinched, negative anti-bisectional curvature provides strong control over the geometry along the flow. In particular, to bound the curvature of the positive time metrics when the initial metric has negative anti-bisectional curvature, it is sufficient to bound a single quantity $\mathfrak{O}$, which we call the orthogonal anti-bisectional trace curvature (see Definition \ref{OABTC} for a formal definition).

\begin{corollary}\label{uniformcontrol}
Suppose that $(T \Omega, \omega_0)$ is a K\"ahler metric  whose scalar curvature is bounded below by a constant $-K$ and whose anti-bisectional curvature is non-positive.
Consider the positive time metric  $(T \Omega, \omega_{t_0})$
and $\E = \{ \E_i \}_{i=1}^n$ be a polarized unitary frame at time $t_0$ and $\X$ be a polarized holomorphic unit vector with respect to $\omega_{t_0}$. Then there exists a constant $C(n)$ depending only on the dimension $n$ so that along K\"ahler-Ricci flow, the polarized holomorphic sectional curvature satisfies the estimate
\begin{equation} \label{Uniformcontrolinequality}
    0 \geq H(\X) \geq - C(n) \left( \frac{n}{t+ \frac{n}{K}}+ 2 \mathfrak{O}(\E) \right). 
\end{equation} 

\end{corollary}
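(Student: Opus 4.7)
The plan is to combine three ingredients: preservation of non-positive anti-bisectional curvature from Theorem \ref{Main theorem} for the upper bound, a Hamilton-type maximum principle on the scalar curvature for the $n/(t+n/K)$ term, and a polarized algebraic identity for the dependence on $\mathfrak{O}(\E)$. By Theorem \ref{Main theorem} non-positivity of the anti-bisectional curvature is preserved along the flow, so at time $t_0$ one has $H(\X)=\mathfrak{A}(\X,\X)\le 0$ for every polarized $\X$; this supplies the upper bound in \eqref{Uniformcontrolinequality} for free.

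Next, I would run Hamilton's maximum-principle argument on the scalar curvature. Along K\"ahler-Ricci flow,
$$\partial_t R=\Delta R+2|\operatorname{Ric}|^2\ge \Delta R+\frac{R^2}{n}$$
by Cauchy-Schwarz in complex dimension $n$. Comparing the spatial minimum with the ODE $\dot f=f^2/n$, $f(0)=-K$, gives the pointwise estimate $R(x,t)\ge -n/(t+n/K)$.

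For the algebraic step I would exploit the Hessian structure of the K\"ahler metric on $T\Omega$: the curvature tensor $R_{i\bar jk\bar l}$ is totally symmetric in its four real indices, so on polarized vectors bisectional and anti-bisectional curvatures coincide. In the polarized unitary frame $\E$ at time $t_0$ this yields
$$R=\sum_i H(\E_i)+2\sum_{i<j}R(\E_i,\overline{\E_j},\E_i,\overline{\E_j})=\sum_i H(\E_i)-2\mathfrak{O}(\E),$$
with $\mathfrak{O}(\E)\ge 0$ (each off-diagonal term being non-positive). Combined with $R\ge -n/(t+n/K)$ and $H(\E_i)\le 0$, this bounds the diagonal entries by $-H(\E_i)\le n/(t+n/K)-2\mathfrak{O}(\E)$ and the bisectional entries by $\mathfrak{O}(\E)$. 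For a general polarized unit $\X=\sum v^i\E_i$, I would expand $H(\X)$ as a totally symmetric real quartic form in $v$ and bound each fully mixed component $R_{i\bar jk\bar l}$ by applying non-positivity of that form to test vectors of the type $\E_i\pm s\E_j\pm s'\E_k$, extracting Cauchy-Schwarz-type bounds in terms of the diagonal and bisectional data already controlled. Collecting these component bounds inside the quartic expansion supplies the dimensional constant $C(n)$ and yields \eqref{Uniformcontrolinequality}.

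The hard part will be this last algebraic manipulation: estimating the genuinely off-diagonal quartic components (those involving three or four distinct indices) purely in terms of $n/(t+n/K)$ and $\mathfrak{O}(\E)$. The diagonal and bisectional bounds follow directly from the scalar identity above, but the fully mixed components require systematic use of the quartic non-positivity together with the total symmetry of the Hessian curvature tensor.
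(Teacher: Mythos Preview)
Your outline has the right architecture---Theorem \ref{Main theorem} for the upper bound and a scalar-curvature lower bound along the flow for the $n/(t+n/K)$ term---but the passage from $S$ and $\mathfrak{O}$ to $H(\X)$ follows a different route than the paper's. The paper does not fix a frame and chase components; instead it proves a \emph{modified Berger lemma} (Lemma \ref{QuantitativeBergers}): integrating $H(\Z)$ over the real sphere $S^{n-1}$ of polarized unit vectors yields, up to a dimensional constant, exactly $S+2\mathfrak{O}$, and then a trigonometric-polynomial argument on two-planes through the minimizing direction shows that the minimum polarized holomorphic sectional curvature is bounded below by a dimensional multiple of this average. Your proposal---using the total symmetry of the Hessian curvature to write a scalar identity in the frame $\E$ and then bounding each mixed component $R_{i\bar jk\bar\ell}$ via non-positivity on test vectors $\E_i\pm s\E_j\pm s'\E_k$---is viable (it is essentially the machinery of Appendix \ref{PHSCandO}), but it is more laborious and produces a worse constant; the averaging-plus-cone argument sidesteps the quartic bookkeeping you correctly flag as the hard part.

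Two smaller corrections. In the paper's convention $\mathfrak{O}=\sum_{i\neq j}R(\E_i,\overline{\E_j},\E_i,\overline{\E_j})$, so under non-positive anti-bisectional curvature one has $\mathfrak{O}\le 0$, not $\ge 0$, and the frame identity (using $R_{i\bar i j\bar j}=R_{i\bar j i\bar j}$ from total symmetry) reads $S=\sum_i H(\E_i)+\mathfrak{O}$, with coefficient $+1$ rather than $-2$. Also, since $T\Omega$ is complete and non-compact, the ODE comparison for the scalar curvature does not follow from the compact maximum principle alone; the paper invokes Chen's estimate \cite{Chen} for this step.
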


Note that for any $K \geq 0$, we have the uniform bound
\begin{equation}  - \frac{C(n)}{t+\frac{n}{K}} <  - \frac{C(n)}{t}. \end{equation} 

As such, the previous corollary provides a bound on the polarized holomorphic sectional curvature which is \textit{independent} of the initial scalar curvature and only depends on $\mathfrak{O}(\E)$ at time $t_0$. It is possible to show that for metrics with negative anti-bisectional curvature, the curvature tensor can be bounded from above and below in terms of the polarized holomorphic sectional curvature and the orthogonal anti-bisectional trace curvature (see Appendix \ref{PHSCandO} for details). Thus, this corollary shows that in order to control the geometry of the metric at a time $t_0>0$, we need only control $\mathfrak{O}$.

To prove Corollary \ref{uniformcontrol}, we establish a quantitative version of Berger's formula for the scalar curvature \cite{Berger}. This estimate has other applications in complex geometry, so we mention it now.

\begin{proposition}
Suppose $\mathbb{M}^n$ is a K\"ahler manifold with non-positive holomorphic sectional curvature. Then there exists a constant $C_n$ depending only on the dimension so that for all unit $(1,0)$ tangent vectors $\X$, the scalar curvature $S$ satisfies the inequality
\begin{equation}
S \leq C_n H(\X). \end{equation}

For metrics with non-negative holomorphic sectional curvature, we obtain the same result with the final inequality reversed.
\end{proposition}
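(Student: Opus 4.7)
The plan is to combine Berger's integral formula for the scalar curvature with the observation that $H$, viewed as a function on the unit $(1,0)$-sphere in $T_p \mathbb{M}$, is the restriction of a polynomial of fixed degree, so its pointwise values are controlled by its $L^1$-norm.

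First I would invoke Berger's averaging formula: at any point $p \in \mathbb{M}^n$ there is a positive dimensional constant $c_n$ with
\begin{equation*}
    \int_{|\X|=1} H(\X) \, d\sigma(\X) = c_n\, S(p),
\end{equation*}
where $d\sigma$ is the standard measure on the unit sphere of $(1,0)$-vectors in $T_p\mathbb{M}$. Under the hypothesis $H \leq 0$ this yields $S \leq 0$ and $\int_{|\X|=1} |H(\X)| \, d\sigma = c_n |S|$.

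Second, in any unitary frame $\{e_i\}$ at $p$, writing $\X = x^i e_i$, one has $H(\X) = R_{i\bar j k \bar l}\, x^i \bar x^j x^k \bar x^l$, so $H$ restricted to $S^{2n-1}$ is a real polynomial of total degree four in the $2n$ real coordinates of $\X$. Because the space of such polynomials is finite-dimensional with dimension depending only on $n$, all norms on it are equivalent; in particular there is a constant $K_n$, depending only on $n$, for which
\begin{equation*}
    \max_{|\X|=1} |H(\X)| \leq K_n \int_{|\X|=1} |H(\X)| \, d\sigma.
\end{equation*}

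Combining the two displays gives $|H(\X)| \leq K_n c_n |S|$ for every unit $(1,0)$-vector $\X$; since $H(\X)$ and $S$ are both non-positive this rearranges to $S \leq (K_n c_n)^{-1} H(\X)$, establishing the claim with $C_n = (K_n c_n)^{-1}$. The case of non-negative $H$ is entirely parallel, producing $S \geq C_n H(\X)$. The only subtle point is that $K_n$ must be extracted from norm equivalence on the \emph{abstract} finite-dimensional space of degree-four polynomials on $S^{2n-1}$ so that it depends only on $n$ rather than on the particular curvature tensor; once this is in place, the proof is essentially just Berger's formula plus finite-dimensional linear algebra.
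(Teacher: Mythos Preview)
Your argument is correct and is a genuinely different, cleaner route than the paper's. You use Berger's averaging formula together with the abstract fact that on the finite-dimensional space of real polynomials of degree four restricted to $S^{2n-1}$, the $L^\infty$ and $L^1$ norms are equivalent with a constant depending only on $n$. This immediately gives $\max_{|\X|=1}|H(\X)| \le K_n c_n |S|$, and the sign hypothesis turns that into the desired inequality.

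The paper instead works by hand in each $2$-plane. Starting from a minimizer $\X$ of $H$ and a perpendicular $\X^\perp$, it expands $f(\theta)=H(\cos\theta\,\X+\sin\theta\,\X^\perp)$ as a trigonometric polynomial $a_0+a_1\cos 2\theta+a_2\sin 2\theta+a_3\cos 4\theta+a_4\sin 4\theta$, uses the first- and second-derivative conditions at the minimum together with $f(0)\le f(\pi/2)$ to constrain the coefficients, and deduces that $f(\theta)\le H(\X)/5$ for all $|\theta|<\pi/12$. Since the set of unit vectors making angle at most $\pi/12$ with $\X$ has a fixed positive measure in $S^{2n-1}$, integrating Berger's formula over this region yields the inequality.

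What each approach buys: yours is shorter and conceptually transparent, and it makes clear that nothing beyond finite-dimensionality is needed. The paper's approach is more explicit --- it produces concrete numerical bounds like the factor $1/5$ and the angular aperture $\pi/12$, so in principle the constant $C_n$ could be tracked explicitly (though the paper does not do so). Your norm-equivalence constant $K_n$ exists but is not made explicit.
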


\subsection{Surfaces with non-negative orthogonal anti-bisectional curvature and optimal transport}

Apart from its applications in complex geometry, anti-bisectional curvature plays an important role in optimal transport. In particular, given a K\"ahler metric on a tube domain whose orthogonal anti-bisectional curvature is \textit{non-negative}, we can define an associated cost function which satisfies the MTW(0) condition (see Subsection \ref{Optimaltransportbackground} for more details). This condition plays an important role in the regularity theory of optimal transport, in that it precludes any local obstructions to the smoothness of transport maps.

For this reason, it is worthwhile to determine whether the non-negativity of orthogonal anti-bisectional curvature is preserved under the flow. Theorem \ref{SurfaceswithNOAB} shows that in complex dimension two (but in no other dimension), this is indeed the case. Due to the correspondence between the MTW tensor and the orthogonal anti-bisectional curvature, this result allows us to generalize the MTW(0) condition for cost functions of the form $c(x,y) = \Psi(x-y)$ where $\Psi$ is a convex function on an open domain in $\mathbb{R}^2$.

\begin{definition}[KR weak regularity] \label{KRWeakregularity}
A cost function $c(x,y)=\Psi_0(x-y)$ is KR weakly regular if there exists a solution to the parabolic flow
\begin{equation} \label{Psicostflow}
  \begin{cases}  \frac{ \partial} {\partial t} \Psi(x,t) = 2 \log \left( \det[ \Hess \Psi(x,t)]  \right)\\
 \Psi(x,0) = \Psi_0(x) \end{cases} 
\end{equation} 
 which has non-negative orthogonal anti-bisectional curvature for positive time.
\end{definition}

Using this flow, we prove an interior H\"older estimate for optimal transport which only relies on a $W^{2,p}$ estimate for $\Psi_0$ (see Corollary \ref{Weak continuity} for a precise statement). 
At present, this result is limited in that to define the above flow we must consider complete K\"ahler metrics. One natural question is whether these results can be extended to incomplete K\"ahler metrics, which often appear in optimal transport.


\begin{question}
Can we find suitable boundary conditions so that the above flow is well defined and Theorem \ref{SurfaceswithNOAB} holds when $\Psi_0$ does not induce a complete metric?
\end{question}

\subsection{Examples}

At first, one might worry that negative anti-bisectional curvature is an overly strong assumption and that the results of the previous sections may be vacuous. To show that this is not the case, in Section \ref{examples}, we provide several example of metrics which satisfy this assumption.
Of particular interest is the following example.

\begin{example*}
Let $\mathbb{B}$ be the unit ball in $\mathbb{R}^n$ and consider the tube domain $T \mathbb{B}$. Let $\omega_0$ be the K\"ahler metric induced by the potential
\begin{equation} \Psi(z) = -\log \left( 1-\sum_{i=1}^n x_i^2 \right),\end{equation} 
where $z^i = x^i +\sqrt{-1} y^i$ and $\Psi$ is independent of the fiber directions $y$. 
\end{example*}

This is a negative cost curved metric on $T \mathbb{B}$ with negative holomorphic sectional curvature. Appealing to Corollary \ref{NegativecostcurvedKahlerEinstein}, this shows that the K\"ahler-Einstein metric on $T \mathbb{B}$ (which was originally studied by Calabi \cite{Calabi}) also has negative cost-curvature.

\section{Background}

We now provide background material on Hessian manifolds, anti-bisectional curvature, optimal transport, and K\"ahler-Ricci flow. This section is not intended to be a complete reference; we focus on results that are needed in this paper. However, we have included references which provide a more thorough background for the interested reader.

\subsection{Background on K\"ahler-Sasaki metrics}
\label{KahlerSasakibackground}

In this paper, we will primarily study tube domains. However, our results can be generalized to the tangent bundles of compact or parallelizable Hessian manifolds (we reserve $\Omega$ for convex domains, so denote Hessian manifolds by $M$). We refer to such metrics as \textit{K\"ahler-Sasaki metrics}. For a Riemannian manifold $(M,g)$ with an affine connection $D$, the \textit{Sasaki metrics} is an associated almost-Hermitian structures on the tangent bundle $T M$  (see \cite{Dombrowski, Satoh} for a more complete reference). In this paper, we restrict our attention to the case where $(M,g,D)$ is a Hessian manifold, in which case the Sasaki metric will be K\"ahler, hence the term \textit{K\"ahler-Sasaki}.

\begin{definition}[Hessian manifolds] \label{Hessianmanifolds}
A Riemannian manifold $(M, g)$ with an affine connection $D$ is said to be a Hessian manifold if
\begin{enumerate}
    \item $D$ is a flat (curvature- and torsion-free) connection,
    \item  such that around every point $x \in M$, there is an open neighborbood $U_x \subset M$,
    \item and a function $\Psi: U_x \to \mathbb{R}$, such that
    \begin{equation}\label{Hessian potential}
        g = D^2 \, \Psi. 
    \end{equation} 
\end{enumerate}
\end{definition}

At first, it might seem unnecessary to consider convex domains as Hessian manifolds. However, there is one conception advantage to using this approach; by considering a convex domain $\Omega$ as a Hessian manifold, we can change coordinates in an affine way. Doing so will play an important role in the proof of Theorem \ref{Main theorem}, so we take some time to discuss this more general setting.

To define a K\"ahler metric on the tangent bundle of a Hessian manifold, we observe that the connection $D$ induces an atlas $\mathcal{A}$ of coordinates charts whose transition maps are affine functions. We consider such a chart (denoted $x$) and construct coordinates on the tangent bundle $T M$ by $(x,y)$, where the fiber coordinates $y$ are defined as $y^j(\mathcal{X}) = \mathcal{X}^j$ for a vector $\mathcal{X}=\mathcal{X}^i \partial_{x^i}$. Finally, we consider the complex coordinates $z = x + \sqrt{-1} y$. This coordinate chart is defined on $T U_x$. Furthermore, it is possible to construct such coordinates in a neighborhood of every point in $M$. Since the transition maps of the charts in $\mathcal{A}$ are affine, the transition maps of their associated charts on $T M$ are holomorphic. As such, the tangent bundle of an affine manifold is a complex manifold.

To define a K\"ahler metric on the tangent bundle, we lift the Hessian potential $\Psi$ to the tangent bundle $T M$ using the \textit{horizontal lift}, defined as
\begin{equation} \label{horizontal lift}
    \Psi^h(z) = \Psi(x)
\end{equation} 
for $z = x + \sqrt{-1} y$.
 It is worth noting that a K\"ahler-Sasaki metric  is complete if and only if the underlying Hessian manifold $(M,g,D)$ is \cite{Molitor}. 

Given a convex domain $\Omega$ in Euclidean space, it is possible to construct a Hessian metric simply by choosing a convex potential function and using the connection induced by differentiation in coordinates. 
In this case, the tangent bundle of $\Omega$ is simply the tube domain $T \Omega$. Furthermore, the K\"ahler metric has a natural $\mathbb{R}^n$ symmetry induced by translations in the $y$-coordinate.



 \subsubsection{Lifting vectors}
 \label{liftingvectors}
It is possible to lift tangent vectors of $\Omega$ to tangent vectors of the tube domain $T \Omega$. Conceptually, this is quite simple (at least for tube domains), but it is essential to the notion of signed anti-bisectional curvature. As such, we take some time to define the concept carefully. Before we do so, we remark on some notational conventions.

To discuss lifting vectors, we must consider the tangent bundle of $\Omega$ separately from the tube domain $T \Omega$. To denote the tangent bundle of $\Omega$, we use the notation $T^{real} \Omega$. We will generally denote elements of $T^{real} \Omega$ with lower case letters (e.g. ``$v$'') and tangent vectors on $ T \Omega$ (i.e., elements of $T T \Omega$) with uppercase calligraphic letters (e.g. ``$\V$''). When it is clear from context, we drop the notation for which base point we are using.
In other words, $v \in T^{real} \Omega$ is used as shorthand for $v \in T^{real}_x \Omega$. Furthermore, in order to avoid over-using the letter $T$, we will often simply use the letter $\mathbb{M}$ to refer to K\"ahler manifolds, K\"ahler-Sasaki or otherwise.

It is possible to lift vectors from $T^{real} \Omega$ (or $T^{real} M$ for a general Hessian metric) to $T \mathbb{M}$ in several different ways. In this paper, we will only consider a single lift, which takes a tangent vector 
\begin{equation}
v =\sum_i  v^i \frac{\partial}{\partial {x^i}} \in  T^{real}_{x_0} \Omega \end{equation} to the $(1,0)$ vector 
\begin{equation} \V = v^{pol.} := \sum_i v^i \frac{\partial}{\partial {z^i}} \in T_{(x_0,y)}^{(1,0)} \mathbb{M}.
\end{equation}
As before, $\{ z^i\}_{i=1}^n$ are the holomorphic coordinates $x^i + \sqrt{-1} y^i$. In this formula, we are free to pick the fiber coordinate $y$ at will. Since the metric is translation symmetric in the fibers, this choice will not affect the geometry. We say that the lifted vectors are polarized, since their components are restricted to an $n$-dimensional subspace of $T \mathbb{M}$. 
Furthermore, we can use the space of polarized (1,0) vectors to make sense of the notion of positive or negative anti-bisectional curvature.

\subsection{Anti-bisectional curvature}
\label{Anti-bisectionalcurvature}
 Given two $(1,0)$ vectors $\V$ and $\W$, the anti-bisectional curvature is defined to be
\begin{equation}
     \mathfrak{A}(\V,\W)= R(\V,\overline \W, \V, \overline \W).
\end{equation}

At first, it does not seem meaningful to say that the anti-bisectional curvature is either positive or negative. While the quantity $R(\V, \overline \W, \V, \overline \W)$ is well defined for any two $(1,0)$-vectors $\V$ and $\W$, if we simply consider the vectors $\sqrt{-1} \V$ and $\W$, we have $R(\V, \overline \V, \V, \overline W) = -R(\sqrt{-1} \V, \overline \W, \sqrt{-1} \V, \overline \W)$.

However, for K\"ahler-Sasaki metrics, by restricting our attention to polarized vectors, it is meaningful to talk about this quantity being positive or negative.

\begin{definition}[Non-positive anti-bisectional curvature] \label{antibisectional}
The K\"ahler metric $\mathfrak{h}_\Psi$ has non-positive orthogonal anti-bisectional curvature, if $\mathfrak{A} (\V,\W)\leq 0$ for any $\V= v^{pol.}$, $\W=w^{pol.}$ with $v,w \in T^{real} \Omega$. 
\end{definition}

By taking $v = w$, we can see that
 negative anti-bisectional curvature implies negative holomorphic sectional curvature for vectors which are \textit{polarized}. However, negative anti-bisectional curvature does not imply negative holomorphic sectional curvature for arbitrary holomorphic vectors. It also does not imply negative Ricci curvature (see Subsection \ref{AntibisectionalandRicci} for some consequences of this).

\begin{definition}[Non-negative orthogonal anti-bisectional curvature] \label{NOAB}
The K\"ahler metric $\mathfrak{h}_\Psi$ has non-negative orthogonal anti-bisectional curvature (abbreviated (NOAB)), if $\mathfrak{A} (\V,\W)\geq 0$ for any $\V=v^{pol.}$, $\W = w^{pol.}$ with $v,w \in T^{real} \Omega$ such that $g(v,w)= 0$.
\end{definition}

We also define the orthogonal anti-bisectional trace curvature to be the trace of the orthogonal anti-bisectional curvature.

\begin{definition}[Orthogonal anti-bisectional trace curvature] \label{OABTC}
Let $\{ \E_i \} \subset T_z T \Omega $ be a unitary frame of polarized holomorphic vectors. The  orthogonal anti-bisectional trace curvature is defined to be the quantity
\begin{equation}
    \mathfrak{O} = \sum_{i \neq j} R(\E_i, \overline{ \E_j}, \E_i, \overline{ \E_j}) . 
\end{equation}

\end{definition}

The orthogonal anti-bisectional trace curvature generally depends on the choice of polarized unitary frame (which corresponds to an orthonormal frame of $T^{real} \Omega$). As such, we can interpret the orthogonal anti-bisectional trace curvature as a map from the orthonormal frame bundle of $\Omega$ to $\mathbb{R}$. In this paper, we will often fix such a frame and consider this as a scalar function.
 Non-negative orthogonal anti-bisectional curvature does not imply any sign for the holomorphic sectional curvature or the scalar curvature.


\subsubsection{The curvature of tube domains}

Heuristically, we expect invariant metrics on tube domains to be negatively curved. This heuristic can be made precise in various ways. For instance, when $\Omega$ is a smooth strongly convex domain that does not contain any line, $T \Omega$ is biholomorphic to a smooth bounded strongly pseudoconvex domain. As such, by a result of Klembeck \cite{Klembeck}, it admits a complete metrics of strongly negative holomorphic sectional curvature. Furthermore, we can use results of Cheng and Yau \cite{ChengYau} (or alternatively the recent work of Wu and Yau \cite{WuYau}) to show that such tube domains admit K\"ahler-Einstein metrics with negative scalar curvature.

With these results in mind, it is reasonable to try to find complete metrics of negative anti-bisectional curvature (in that $R(\V,\overline \W ,\V ,\overline W)<0$ for all $\V=v^{pol.}, \W=w^{pol.}$) or negative cost-curvature. 
However, when the complex dimension is greater than one, we are not aware of any examples with \textit{strongly negative} anti-bisectional curvature  (in that \begin{equation} R(\V,\overline \W ,\V ,\overline W) < -\kappa |V|^2 |W|^2 \end{equation} for some $\kappa>0$). In particular, any complex space form has vanishing orthogonal anti-bisectional curvature. Furthermore, for a smooth and strictly pseudo-convex tube domain, the K\"ahler-Einstein metric converges to a complex hyperbolic metric near its boundary, which implies that its orthogonal anti-bisectional curvature attenuates. As such, negative cost-curvature is the correct way to strengthen the hypothesis of negative anti-bisectional curvature so that the assumption does not become vacuous.

\subsection{Optimal transport and the role of anti-bisectional curvature}

\label{Optimaltransportbackground}

Optimal transport studies the most economical way to transport resources. This problem was originally studied in 1781 \cite{Monge} and combines aspects from analysis, probability and geometry. For two comprehensive references on the subject, we refer the reader \cite{Santambrogio} and \cite{OTON}.
An understanding of this subject is not necessary to understand the main results of this paper, so this subsection may be skipped by readers whose primary interest is complex geometry. However, the regularity theory of optimal transport provides the original motivation for studying anti-bisectional curvature and we will use several results about optimal transport in Subsection \ref{NOABApplications}.

The Monge problem of optimal transport is the following.
 We are given an initial probability space $(X, \mu)$, a target probability space $(Y,\nu)$ and a cost function $c: X \times Y \to \mathbb{R}$, which provides the cost of transporting a unit of mass from a point $x \in X$ to a point $y \in Y$.
We then seek to find a $\mu$-measurable map (i.e., a transport plan) $T:X \to Y$ such that such that the total cost is minimized.
 \begin{equation} \label{Mongeproblem}
      \int_X c(x,T(x)) d \mu = \inf_{S_\sharp \mu=\nu} \int_X c(x,S(x)) d \mu 
 \end{equation}
Here, the notation $T_\sharp \mu = \nu$ means that
$\mu[T^{-1}(Y)] = \nu[Y]$ for each Borel set $Y \subset{\mathbb{R}^n}$.

This is a special case of the Monge-Kantorovich problem of optimal transport, which tries to minimize the integral
\begin{equation} \label{Kantorovichproblem}
\min _{\pi \in \Pi\left(\mu, \nu \right)} \int_{X \times Y} c(x, y) d \pi(x, y),
\end{equation}
where $\Pi\left(\mu, \nu \right)$ denotes the space of all couplings between $\mu$ and $\nu$. The Monge-Kantorovich problem can be solved in great generality and has a dual formulation as a linear program. We will make use of this generalization in Corollary \ref{Weak continuity}, so we will mention it now mention it now.

Under fairly general conditions, the solution to the Monge-Kantorovich problem will be unique and also be a solution to the Monge problem. For instance, this is the case whenever
\begin{enumerate}
    \item  $X$ and $Y$ are domains of $\mathbb{R}^n$,
    \item  $\mu$ and $\nu$ are absolutely continuous with respect to the Lebesgue measure, and
    \item $c$ satisfies some smoothness and non-degeneracy assumptions.
\end{enumerate}
When $c$ and the measures are sufficiently smooth, the transport map $T$ is given by the $c$-subdifferential of a potential function  $u: X \to \mathbb{R}$, which weakly solves the Monge-Amp\`ere type equation:
\begin{equation} \label{MongeAmpere}
\det(\nabla^2 u + A(x, \nabla u)) = B(x,\nabla u). 
\end{equation} 

Here, $A$ is a matrix-valued function which depends on the cost function and $B$ is a scalar valued function which depends on the cost function and the two probability measures. Furthermore, the boundary conditions force the image of the $c$-subdifferential of $u$ on $X$ to be $Y$. For a more complete overview on the existence theory of the Monge problem, we refer the reader to Chapter 10 of Villani's text \cite{OTON} (see also Brenier \cite{YB} and Gangbo-McCann \cite{GOT} for the original results).

Once the existence of an optimal transport map has been established, it is natural to ask is whether the transport map is continuous, in that nearby points in initial configuration $(X, \mu)$ get sent to nearby points in the target $(Y,\nu)$. This problem was first studied in the special case $c(x,y) = |x-y|^2/2$ (see, e.g., \cite{LC} \cite{Delanoe} \cite{Urbas}).

For general cost functions, the interior regularity of $u$ was studied in a landmark 2005 paper of Ma, Trudinger and Wang \cite{MTW}, and the global regularity was addressed by Trudinger and Wang several years later \cite{TW}. In particular, Ma, Trudinger and Wang showed that for smooth cost functions, in order to obtain a $C^2$-estimate on $u$ (and thus a Lipschitz estimate on $T$), it is necessary to make the following assumption.

\begin{definition}[MTW(0) Condition]\label{MTW0definition}
Let $\xi \in \mathbb{R}^n$ be a vector and $\eta \in (\mathbb{R}^n)^\ast$ be a covector with $ \eta(\xi)=0$. We say a cost function satisfies the MTW(0) condition if the following inequality holds
\begin{equation}\label{MTW0}
\mathfrak{S}(\xi,\eta) := \sum_{i,j,k,l,p,q,r,s} (c_{ij,p}c^{p,q}c_{q,rs}-c_{ij,rs})c^{r,k}c^{s,l} \xi^i \xi^j \eta^k \eta^l \geq 0.
\end{equation}

Here, we use the notation $c_{I,J} = \frac{\partial^{|I|+|J|}}{ \partial x^I \partial y^J}$
for any pair of multi-indices $I,J$ and use the notation $c^{i,j}$ to denote the matrix inverse of the mixed Hessian $c_{i,j}$.
\end{definition}

The quantity $\mathfrak{S}$ is known as the MTW tensor and plays a central role in the regularity theory of optimal transport. More specifically, this assumption prevents local obstructions for smoothness of the transport map. Loeper  found a geometric perspective for this condition and showed that for costs which do not satisfy MTW(0), it is possible to find smooth measures\footnote{There is also a global obstruction to regularity, which is avoided whenever the spaces $X$ and $Y$ are \textit{relatively $c$-convex} (\cite{MTW} Definition 2.3). We will use this assumption in Corollary \ref{Weak continuity}, but will not need it anywhere else in this paper.} for which the solution to the Monge optimal transport problem is \textit{discontinuous}
\cite{Loeper}. As such, cost functions which satisfy the MTW(0) condition are said to be \textit{weakly regular}. Furthermore, cost functions which satisfy the inequality while dropping the orthogonality of $ \eta$ and $\xi$ are said to have non-negative cost-sectional curvature. 
For costs of the form $c(x,y)= \Psi(x-y)$ (henceforth \textit{$\Psi$-costs}), the relationship between the $\mathfrak{S}$ and the anti-bisectional curvature was observed by the first author and Zhang.

\begin{theorem*}[\cite{KhanZhang} Theorem 6]
Let $c$ be a cost function of the form $c(x,y)= \Psi(x-y)$ for some convex function $\Psi: \Omega \to \mathbb{R}$. 
Then the MTW tensor of $c$ is proportional to the anti-bisectional curvature of the tube domain $T \Omega$ whose K\"ahler potential is $\Psi^h$.
\end{theorem*}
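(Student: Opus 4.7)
The plan is to compute both sides explicitly in natural coordinates and then observe that under a suitable identification of the MTW covector $\eta$ with a polarized vector on $T\Omega$ via the Hessian $\Psi^{-1}$, the two quantities agree up to an explicit constant factor.

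First I would set up the K\"ahler geometry. In the holomorphic coordinates $z^i = x^i + \sqrt{-1} y^i$, the horizontal lift $\Psi^h(z) = \Psi(x)$ depends only on the real part, so
\begin{equation*}
  g_{i\bar{j}} = \frac{\partial^2 \Psi^h}{\partial z^i \partial \bar{z}^j} = \tfrac{1}{4}\Psi_{ij}(x),
  \qquad g^{p\bar{q}} = 4\Psi^{pq}(x),
\end{equation*}
and similarly
$\partial_k g_{i\bar{j}} = \tfrac{1}{8}\Psi_{ijk}$ and $\partial_k\partial_{\bar{l}} g_{i\bar{j}} = \tfrac{1}{16}\Psi_{ijkl}$.
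Substituting into the standard K\"ahler formula $R_{i\bar{j}k\bar{l}} = -\partial_k\partial_{\bar{l}} g_{i\bar{j}} + g^{p\bar{q}}\, \partial_k g_{i\bar{q}}\, \partial_{\bar{l}} g_{p\bar{j}}$ yields the clean expression
\begin{equation*}
  R_{i\bar{j}k\bar{l}} = \tfrac{1}{16}\bigl(\Psi^{pq}\Psi_{ikp}\Psi_{jlq} - \Psi_{ijkl}\bigr).
\end{equation*}
Contracting against polarized vectors $\V = v^i \partial_{z^i}$, $\W = w^j \partial_{z^j}$ (whose components $v,w$ are real) gives $\mathfrak{A}(\V,\W) = \tfrac{1}{16}(\Psi^{pq}\Psi_{ikp}\Psi_{jlq} - \Psi_{ijkl}) v^i v^k w^j w^l$.

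Next I would compute the MTW tensor for $c(x,y) = \Psi(x-y)$, being careful with the signs produced by each $y$-derivative:
$c_{ij,p} = -\Psi_{ijp}$, $c_{q,rs} = -\Psi_{qrs}$, $c_{ij,rs} = \Psi_{ijrs}$, $c_{p,q} = -\Psi_{pq}$, and hence $c^{p,q} = -\Psi^{pq}$, $c^{r,k} = -\Psi^{rk}$. Plugging in and cancelling signs,
\begin{equation*}
  \mathfrak{S}(\xi,\eta) = \bigl(\Psi^{pq}\Psi_{ijp}\Psi_{qrs} - \Psi_{ijrs}\bigr)\Psi^{rk}\Psi^{sl}\,\xi^i\xi^j\eta^k\eta^l.
\end{equation*}
I would then define the polarized vector associated to $\eta$ by raising with the inverse Hessian, $w^r := \Psi^{rk}\eta_k$, and set $v := \xi$. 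The key observation is that with this identification and the K\"ahler symmetries of $\Psi_{ijkl}$ and $\Psi_{ikp}\Psi_{jlq}$ under the swap $(j,l)\leftrightarrow(r,s)$, the bracket in $\mathfrak{S}$ becomes precisely the bracket appearing in $R_{i\bar j k \bar l}v^i v^k w^j w^l$. Thus
\begin{equation*}
  \mathfrak{S}(\xi,\eta) \;=\; 16\,\mathfrak{A}(\V,\W).
\end{equation*}
As a sanity check on the geometric side, $g(\V,\W) = \tfrac{1}{4}\Psi_{ij}v^i w^j = \tfrac{1}{4}\Psi_{ij}\xi^i\Psi^{jk}\eta_k = \tfrac{1}{4}\eta(\xi)$, so the MTW orthogonality $\eta(\xi)=0$ corresponds exactly to $\V\perp \W$ in $(T\Omega,g)$, confirming that the MTW(0) condition matches non-negative orthogonal anti-bisectional curvature.

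I don't anticipate a serious obstacle; the proof is essentially a coordinate calculation. The only care needed is bookkeeping of the signs arising from the differences of $y$-derivatives versus $x$-derivatives in the cost $\Psi(x-y)$, and recognising the \emph{right} raising operation (by $\Psi^{-1}$, i.e.\ by $4g^{-1}$) that identifies the covector $\eta$ with a polarized $(1,0)$-vector. Once that identification is made, the proportionality of the two tensors, and the correspondence between the two notions of orthogonality, fall out of the symmetry of the fourth derivatives of $\Psi$.
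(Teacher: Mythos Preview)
The paper does not actually prove this statement; it is quoted verbatim from \cite{KhanZhang} as background, so there is no in-paper argument to compare against. Your direct coordinate computation is the natural (and presumably the original) route, and the final identities
\[
R_{i\bar jk\bar l}=\tfrac{1}{16}\bigl(\Psi^{pq}\Psi_{ikp}\Psi_{jlq}-\Psi_{ijkl}\bigr),
\qquad
\mathfrak{S}(\xi,\eta)=16\,\mathfrak{A}(\V,\W),
\qquad
h(\V,\overline{\W})=\tfrac14\,\eta(\xi)
\]
are all correct.

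One small slip to fix: in your list of cost derivatives you write $c_{q,rs}=-\Psi_{qrs}$, but since $c_{q,rs}$ carries \emph{two} $y$-derivatives the sign is $+\Psi_{qrs}$. With the correct sign the product $c_{ij,p}c^{p,q}c_{q,rs}=(-\Psi_{ijp})(-\Psi^{pq})(\Psi_{qrs})=\Psi^{pq}\Psi_{ijp}\Psi_{qrs}$ indeed gives your displayed formula for $\mathfrak{S}$, so the error is confined to that intermediate line and does not propagate. Otherwise the argument is complete.
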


In view of this result, we see that Theorem \ref{SurfaceswithNOAB} is relevant to optimal transport. In particular, for KR weakly regular two-variable $\Psi$-costs whose convex potential induces a complete Hessian metric, we can flow in time to find new costs which are weakly regular.

\subsubsection{Parabolic Flows and Optimal Transport}
The solution to the Monge problem involves solving an elliptic equation, so most of the research has focused on elliptic equations. However, there have been several works which have studied parabolic flows in the context of optimal transport. In particular, Kitagawa introduced a parabolic flow of potentials which converges to the solution of a Monge problem \cite{KitagawaParabolic}. For the cost function $c(x,y) = \frac{1}{2}\|x-y\|^2$ (better known as the squared-distance cost), the highest order term of this flow corresponds to K\"ahler-Ricci flow if we lift the potentials to obtain a flow on the tube domain $T \Omega$. There are extra drift and potential terms to ensure that the flow converges to the Monge solution and non-linear boundary conditions that serve to preserve the mass. However, the formal correspondence to K\"ahler-Ricci flow is very suggestive. 

There is one important conceptual difference between the flow in Kitagawa's work and the one we consider here. In the former, the potential evolves to converge to the solution of an optimal transport problem. In this paper, the flow instead deforms the \emph{cost function}.

Using Ricci flow to evolve the cost function has also been studied \cite{McCannTopping, Topping}. However, these works take a somewhat different perspective in that the cost function is either the square-distance cost $c(x,y) = d(x,y)^2/2$ or Perelman's reduced length functional $\mathcal{L}$ \cite{Perelmanentropy}. Further more, the Ricci flow is used to evolve the Riemannian metric, so the evolution of the cost function is implicit. In this paper, we instead consider costs which are induced by a convex potential and the flow directly evolves the cost function.

 From an optimal transport perspective, this might seem unusual at first. However, there are reasons to think that this deformation is useful. First, K\"ahler-Ricci flow tends to make the geometry of a space more homogeneous. As such, if we consider a two dimensional $\Psi$-cost which is KR weakly regular but has points where the MTW tensor is strongly positive, after a short period of time we can expect the MTW tensor to be strictly positive. 
Furthermore, the Ricci potential (i.e., $\log \left( \det[ \Hess \Psi] \right))$ has an optimal transport interpretation, in that it gives a quantitative measure for how strongly the cost $c(x,y)= \Psi(x-y)$ is twisted (see \cite{OTON} Chapter 12 for a in-depth discussion of twisting for cost functions).


\subsubsection{Completeness of $\Psi$-costs}

At present, there is a limitation in this theory, in that K\"ahler-Ricci flow is only uniquely defined for $\Psi$-costs where $(\mathbb{M},\omega_\Psi)$ is complete as a K\"ahler manifold. We refer to such $\Psi$-costs as \emph{complete}. Completeness is a natural assumption in K\"ahler geometry. However, it is not as important in optimal transport. As such, there are many $\Psi$-costs which are weakly regular but not complete. Notable among these is the cost 
\begin{equation} \label{logcost}
    c(x,y)=\log \left(1 + \sum_{i=1}^n \exp(x_i-y_i)\right).
\end{equation}
This cost function appears in mathematical finance \cite{PalWong}. From a geometric perspective, it induces a metric of constant positive holomorphic sectional curvature which is not complete. This implies that the cost \eqref{logcost} is weakly regular and has non-negative cost curvature. However, since the associated K\"ahler metric is not complete, there are many possible Ricci flows with this initial condition, many of which will not preserve the weak regularity. It is of interest to find appropriate boundary conditions for incomplete metrics so that Theorem \ref{SurfaceswithNOAB} continues to hold.

\subsection{Background on K\"ahler-Ricci flow}
\label{BackgroundonKRflow}

We now provide some background on K\"ahler-Ricci flow. In this section, we will focus only on the results that we will need for this paper. For a much more complete overview, we refer to the book by Boucksom et. al. \cite{Boucksom}.
For complete K\"ahler manifolds, the flow is given by the parabolic flow
\begin{equation}\label{KRflowwithinitialconditions}
 \begin{cases} 
\frac{\partial}{\partial t} \omega(t) =-2\operatorname{Ric}(\omega(t))-\lambda \omega(t) \\
\omega(0) =\omega_{0}.
\end{cases} 
\end{equation} 

Here, $\lambda$ is a parameter which serves to normalize the flow. When $\lambda$ is zero, this is the unnormalized K\"ahler-Ricci flow. We will also consider the case $\lambda = 2$, which we refer to as the normalized K\"ahler-Ricci flow.
Our main focus in this paper are curvature conditions which are preserved under the flow.

\begin{definition}[Preserved curvature conditions]
A curvature condition $\kappa$ is preserved under K\"ahler-Ricci flow if whenever $\omega_0$ satisfies $\kappa$, the metrics $\omega_t$ also satisfy $\kappa$ for $t>0$. 
\end{definition}


 For compact manifolds, Hamilton proved a tensor maximum principle which states that a curvature condition is preserved under the flow whenever it satisfies a null-vector condition \cite{HamiltonFourmanifolds}.
As such, to find invariant curvature conditions we must understand how the curvature evolves under the K\"ahler-Ricci flow. To do so, we consider holomorphic coordinates $\{z^i \}_{i=1}^n$ and write out the curvature tensor as
 \begin{equation}
     R_{i \overline j k \overline \ell}  =  R\left( \frac{\partial}{ \partial z^i}, \frac{\partial}{ \partial \overline z^j},\frac{\partial}{ \partial z^k},\frac{\partial}{ \partial \overline z^\ell} \right).
 \end{equation} 
 
 Computing the time-evolution of this tensor, we find that
\begin{eqnarray}
\frac{ \partial} {\partial t} R_{i \overline j k \overline \ell} & = & \Delta R_{i \overline j k \overline \ell} - \lambda R_{i \overline j k \overline \ell} \nonumber \\
& &+ 2 R_{i \overline j p \overline r} R_{s \overline q k \overline \ell} h^{\overline p  q} h^{r \overline s} + 2 R_{i \overline \ell p \overline r} R_{s \overline q k \overline j} h^{\overline p  q} h^{r \overline s}-  2 R_{i \overline q k \overline r} R_{p \overline j s \overline \ell} h^{\overline p  q} h^{r \overline s} \label{TermsinQ} \\
& & - \left( \textrm{Ric}_{i \overline q} R_{p \overline j k \overline \ell} h^{\overline p  q} + \textrm{Ric}_{p \overline j} R_{i \overline q k \overline \ell} h^{\overline p  q}+ \textrm{Ric}_{k \overline q} R_{i \overline j p \overline \ell} h^{\overline p  q}+R_{p \overline \ell} \textrm{Ric}_{i \overline j k \overline q} h^{\overline p  q} \right) \label{Uhlenbeckterms} .
\end{eqnarray}

From this, we can see that K\"ahler-Ricci flow behaves like a non-linear reaction-diffusion equation for the curvature. The Laplacian term acts to diffuse the curvature throughout the space and the rest of the terms are reaction terms.

The term involving $\lambda$ comes from the renormalization of the metric and vanishes when $\lambda = 0$. If we consider curvature conditions which are scale-invariant, this term will play no role, so we ignore it in our analysis. From a conceptual perspective, the terms in line \eqref{Uhlenbeckterms} are an artifact of the coordinates, and do not come from any intrinsic geometric evolution. In other words, it is possible to find a gauge transformation which makes these terms vanish (see Equation \eqref{Uhlenbecktrick} for the precise gauge transformation). 

Thus, the crucial reaction terms are those in line \eqref{TermsinQ}. Hamilton's maximum principle states than to find an invariant curvature tensors, we can discard the diffusion terms and study the system of the ordinary differential equations
\begin{equation} \label{curvatureODE}
    \frac{ \partial} {\partial t} R_{i \overline j k \overline \ell} = 2 R_{i \overline j p \overline r} R_{s \overline q k \overline \ell} h^{\overline p  q} h^{r \overline s} + 2 R_{i \overline \ell p \overline r} R_{s \overline q k \overline j} h^{\overline p  q} h^{r \overline s}-  2 R_{i \overline q k \overline r} R_{p \overline j s \overline \ell} h^{\overline p  q} h^{r \overline s},
\end{equation} 
which defines a flow of the space of algebraic curvature tensors. To show that a curvature condition is preserved, we must only verify that these reaction terms are ``inward-pointing" for any algebraic curvature tensor on the boundary of the curvature condition.

The terms in Equation \eqref{curvatureODE} are quadratic in the curvature. Heuristically, they also tend to be positive (see \cite{Wilking} for a Lie-algebraic perspective on this phenomena). For this reason, the curvature conditions which are preserved are generally positivity conditions (see \cite{ Brendle, Cao, GuZhang, HamiltonFourmanifolds, Mok, Nguyen} for examples). In fact, the only previously known example of an invariant negativity condition was negative curvature for Riemann surfaces. For this reason, our main result is somewhat unexpected.



Before moving on, it is worth noting that we are primarily concerned with complete K\"ahler manifolds. As such, we will not be able to use Hamilton's maximum principle directly. Instead, we will use a version which applies to complete manifolds and was originally proven by Shi \cite{Shi}.



\subsubsection{Analytic Preliminaries}
\label{Analyticpreliminaries}
In order to study the behavior of K\"ahler-Ricci flow on tube domains, we must first know that solutions exist and are unique. Fortunately, this problem has been studied extensively in the literature and we will be able to use existing results to establish existence, uniqueness and convergence.

When Hamilton introduced the Ricci flow for compact manifolds, his proof of the existence and uniqueness used the Nash-Moser inverse function theorem. Deturck later found a much simpler proof by conjugating Ricci flow by a time dependent diffeomorphism \cite{Deturck}. However, these results concern compact manifolds, and so do not directly apply to our setting.

For complete manifolds, it is no longer possible to directly use the standard maximum principle, which makes establishing existence and uniqueness much more difficult. The pioneering work for complete non-compact spaces was done by Shi, who established the following result.

\begin{theorem*}[\cite{Shi} Theorems 2.1 and 5.1]
Let $(\mathbb{M}, \omega_0)$ be a complete K\"ahler manifolds with bounded curvature. The K\"ahler-Ricci flow exists for some positive time $T$. Furthermore, it preserves the complex structure and the metrics remain K\"ahler under the flow.
\end{theorem*}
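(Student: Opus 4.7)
The plan is to follow Shi's original strategy, which reduces the non-compact problem to an exhaustion by compact pieces with Dirichlet data and then extracts a convergent subsequence, before separately verifying that the K\"ahler structure is preserved.

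First I would establish short-time existence. Because the standard Ricci flow is only weakly parabolic, I would apply the DeTurck trick, writing the flow as a strictly parabolic system $\partial_t g_{ij} = -2\operatorname{Ric}_{ij} + \nabla_i W_j + \nabla_j W_i$, where $W$ is the vector field obtained by comparing the Levi-Civita connection of $g(t)$ to a fixed background connection. On a compact exhaustion $\{U_k\}$ of $\mathbb{M}$ with smooth boundary, I would solve the DeTurck--Dirichlet problem with initial data $\omega_0|_{U_k}$ and boundary data frozen at $\omega_0$ using standard parabolic theory for strictly parabolic quasilinear systems. The central task is then to produce uniform interior estimates in $k$. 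For this I would invoke Shi-type curvature derivative estimates: using the evolution equation for $|\operatorname{Rm}|^2$ (which is of the schematic form $(\partial_t - \Delta)|\operatorname{Rm}|^2 \leq C|\operatorname{Rm}|^3$) together with a spatial cutoff and a maximum principle, I obtain bounds of the form $|\nabla^m \operatorname{Rm}|(x,t) \leq C_m / t^{m/2}$ depending only on the initial curvature bound. These bounds are independent of the exhausting domain $U_k$, so an Arzel\`a--Ascoli diagonal argument yields a smooth solution on $\mathbb{M} \times [0,T)$ for some $T>0$ depending only on $\sup|\operatorname{Rm}(\omega_0)|$. Finally, undoing the DeTurck diffeomorphism (which itself exists for a short time by an ODE argument with bounded vector field) produces a genuine Ricci flow solution.

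Next I would show that the flow preserves the K\"ahler structure. On any K\"ahler manifold the Ricci tensor is $J$-invariant and of type $(1,1)$, hence the evolution $\partial_t g = -2\operatorname{Ric}$ automatically keeps $g(t)$ Hermitian with respect to the fixed $J$. To see that the K\"ahler form $\omega(t) = g(t)(J\,\cdot\,,\,\cdot\,)$ remains closed, I would compute $\partial_t(\nabla J) = 0$ in the Hermitian connection and conclude that $\nabla^{g(t)} J = 0$ is preserved; equivalently, one checks that $d\omega(t)$ satisfies a linear homogeneous parabolic system with vanishing initial data, so by the maximum principle (applied with a cutoff, using the uniform bounds from the previous paragraph) $d\omega(t) \equiv 0$. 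Alternatively, and more efficiently in the K\"ahler setting, one may recast the flow as the scalar parabolic complex Monge--Amp\`ere equation \eqref{KahlerRicciflow} for a potential $\Psi(z,t)$; short-time existence for this scalar equation on a complete manifold with bounded geometry, obtained again via Shi's exhaustion plus interior Krylov--Evans and Schauder estimates, automatically yields a solution whose associated metric is K\"ahler.

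The main obstacle is producing the interior $C^k$ estimates on the approximating solutions that are independent of the exhausting domain. The reaction terms in \eqref{TermsinQ}--\eqref{Uhlenbeckterms} are quadratic in curvature, so a naive maximum principle argument on $|\operatorname{Rm}|^2$ only controls curvature for a time inversely proportional to its supremum; one must combine this with cutoff functions whose derivatives are controlled by the initial geometry. A secondary difficulty is that uniqueness on complete non-compact manifolds is genuinely subtle --- the standard Hamilton-type maximum principle does not suffice, and one needs Chen--Zhu's uniqueness theorem (or Kotschwar's energy method) to see that the limit solution is well-defined independent of the exhaustion. With these ingredients in place the theorem follows, and in particular justifies invoking Shi's maximum principle for complete manifolds in the sequel.
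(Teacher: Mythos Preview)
The paper does not prove this statement; it is quoted as a black-box result from Shi's 1997 paper (Theorems 2.1 and 5.1 there) and used as an analytic preliminary in Section \ref{Analyticpreliminaries}. There is therefore no ``paper's own proof'' to compare against --- the authors simply invoke Shi's theorem to guarantee short-time existence on complete non-compact K\"ahler manifolds with bounded curvature, and then combine it with the Chen--Zhu uniqueness theorem to conclude that the flow preserves the $\mathbb{R}^n$-symmetry of K\"ahler-Sasaki metrics.

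Your sketch is a fair high-level summary of Shi's original argument: DeTurck trick, exhaustion by compact domains with Dirichlet data, uniform interior Shi-type derivative estimates independent of the exhausting domain, and Arzel\`a--Ascoli extraction. The separate verification that K\"ahlerity is preserved (either via the vanishing of $\nabla J$ or by recasting the flow as a scalar complex Monge--Amp\`ere equation) is also how Shi proceeds. So your proposal is not wrong, but be aware that in the context of this paper you are being asked to reproduce a cited external result rather than something the authors prove themselves.
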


For a tube domain with a complete K\"ahler metric of bounded curvature, this result shows that the K\"ahler-Ricci flow exists. However, a priori one might worry that if one starts with a K\"ahler-Sasaki metric, the solution might fail to be K\"ahler-Sasaki after some time. To address this concern, we can use a result of Chen and Zhu which shows that for metrics of bounded curvature, the flow is unique and that it preserves the isometry group.

\begin{theorem*}[\cite{ChenZhu} Corollary 1.2]
Suppose $(M^n,g_{ij}(x) )$ is a complete Riemannian manifold and suppose $g_{ij}(x,t)$ is a solution to the Ricci flow with bounded curvature on $M^n \times [0,T]$ and with $g_{ij}(x)$ as initial data. If $G$ is the isometry group of $(M^n,g_{ij}(x) )$, then $G$ remains and isometric subgroup of $(M^n,g_{ij}(x,t) )$ for $t \in [0,T]$.
\end{theorem*}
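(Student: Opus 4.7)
The plan is to reduce the statement to the uniqueness of Ricci flow within the class of complete solutions of bounded curvature. Fix any $\phi \in G$, and let $g(t)$ denote the given solution of Ricci flow on $M^n \times [0,T]$ with initial data $g_0$ and $\sup_{M \times [0,T]} |\mathrm{Rm}|_{g(t)} < \infty$. Consider the pulled-back family $\tilde g(t) := \phi^{\ast} g(t)$. First I would verify that $\tilde g(t)$ is again a solution of Ricci flow on $[0,T]$: because $\phi$ is a diffeomorphism and the Ricci tensor is natural under diffeomorphism, $\mathrm{Ric}(\tilde g(t)) = \phi^{\ast} \mathrm{Ric}(g(t))$, and pull-back commutes with $\partial_t$, so $\partial_t \tilde g(t) = -2\, \mathrm{Ric}(\tilde g(t))$. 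Second, since $\phi \in G$ is a global isometry of $g_0$, the initial data match: $\tilde g(0) = \phi^{\ast} g_0 = g_0$. Third, $\tilde g(t)$ has uniformly bounded curvature on $[0,T]$ because $|\mathrm{Rm}_{\tilde g(t)}|_{\tilde g(t)}(x) = |\mathrm{Rm}_{g(t)}|_{g(t)}(\phi(x))$ pointwise.

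The decisive step is then to invoke uniqueness of Ricci flow in the class of complete solutions of bounded curvature, applied to the pair $g(t)$ and $\tilde g(t)$: they share the same initial data and both belong to this class, so $\tilde g(t) = g(t)$ throughout $[0,T]$. Unwinding the definition gives $\phi^{\ast} g(t) = g(t)$, i.e., $\phi$ is an isometry of $g(t)$ for each $t \in [0,T]$. Running this argument uniformly over $\phi \in G$ realizes $G$ as a subgroup of $\mathrm{Isom}(M, g(t))$ for every $t$, which is the assertion.

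The main obstacle is clearly the uniqueness input: on a noncompact complete manifold the standard maximum principle does not directly apply to the curvature evolution, since extrema can escape to infinity, and the Ricci flow is only weakly parabolic, so one cannot appeal to textbook PDE uniqueness either. The approach is to apply DeTurck's trick to the difference of two solutions to convert the weakly parabolic system into a strictly parabolic nonlinear heat equation for a gauge-fixed tensor, and then combine Shi-type a priori derivative estimates on the curvature with carefully constructed cutoff or barrier functions adapted to the global bounded-curvature hypothesis. This yields uniqueness for complete solutions with uniformly bounded curvature, and the formal naturality argument above then upgrades it to the preservation of the full isometry group.
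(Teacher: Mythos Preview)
Your argument is correct and is exactly the standard derivation of this corollary from the uniqueness theorem. Note, however, that the paper does not prove this statement at all: it is quoted verbatim as Corollary~1.2 of Chen--Zhu \cite{ChenZhu} and used as a black box. In Chen--Zhu's paper the corollary is obtained precisely as you describe---pulling back by an isometry produces a second bounded-curvature solution with the same initial data, and their main Theorem~1.1 (uniqueness for complete solutions with bounded curvature) forces the two to coincide. Your last paragraph correctly identifies that the substantive content lies entirely in the uniqueness theorem itself, which requires the DeTurck gauge-fixing combined with a delicate noncompact maximum-principle argument; that is the work Chen--Zhu carry out, and it is not reproduced in the present paper.
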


Taken together, the previous two results show the following.

\begin{corollary} \label{ExistenceUniqueness}
Let $(M,g,D)$ be a complete Hessian manifold whose tangent bundle is trivial. Suppose also that the curvature of $\mathbb{M} = T M$ with the K\"ahler metric $\omega_0$ induced by $g$ and $D$ is bounded.  Then there exists a unique K\"ahler-Ricci flow on $\mathbb{M} = T M$ with initial metric $\omega_0$. Furthermore, this flow preserves the $\mathbb{R}^n$ symmetry and so is induced by a flow of the underlying convex potential $\Psi: \Omega \to \mathbb{R}$.
\end{corollary}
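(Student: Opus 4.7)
The plan is to combine Shi's short-time existence theorem, Chen-Zhu's uniqueness and isometry preservation results, and a structural observation about $\mathbb{R}^n$-invariant K\"ahler metrics on $T M$. First, Shi's theorem applies directly to $(\mathbb{M}, \omega_0) = (T M, \omega_0)$: this metric is complete (K\"ahler-Sasaki completeness is equivalent to Hessian completeness of $M$, as noted in Subsection \ref{KahlerSasakibackground} citing Molitor) and has bounded curvature by hypothesis, so a K\"ahler-Ricci flow $\omega_t$ exists on some interval $[0, T]$ and remains K\"ahler. Uniqueness of $\omega_t$ among complete solutions with bounded curvature follows from the main uniqueness result of Chen-Zhu.

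Next, I would observe that the K\"ahler-Sasaki construction equips $T M$ with a natural $\mathbb{R}^n$-action by biholomorphic isometries of $\omega_0$. Because $M$ is parallelizable and $D$ is flat and torsion-free, there is a global trivialization $T M \cong M \times \mathbb{R}^n$ by parallel sections, in which $\mathbb{R}^n$ acts by fiber translation $y \mapsto y + a$. This action is holomorphic, since the affine transition maps on $M$ complexify to holomorphic transitions on $T M$ that are compatible with the global translation, and it preserves $\omega_0$ because the horizontal lift $\Psi^h(x,y) = \Psi(x)$ is manifestly $y$-independent. Invoking Chen-Zhu's Corollary 1.2, the same $\mathbb{R}^n$ acts by isometries of $\omega_t$ for every $t \in [0, T]$. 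Since $\omega_t$ is also K\"ahler, any local K\"ahler potential may be chosen $y$-independent, giving a family of convex potentials $\Psi_t$ on $M$ (well-defined modulo affine functions, which do not affect the metric), so the flow \eqref{KRflowwithinitialconditions} reduces to the scalar parabolic Monge-Amp\`ere equation \eqref{KahlerRicciflow} for $\Psi_t$.

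The main obstacle is the construction of the global $\mathbb{R}^n$-action: parallelizability of $TM$ together with flatness of $D$ is precisely what enables this, since without it the fiber-translation action would only be defined in local affine charts and would fail to patch to a global isometric action (affine transition maps of the form $\tilde{x} = Ax + b$ produce $\tilde{y} = Ay$, which is incompatible with a single $\mathbb{R}^n$ translation unless the charts are tied together by a parallel trivialization). In the tube-domain case $T\Omega = \Omega + \sqrt{-1}\mathbb{R}^n$ this is immediate from a global affine chart, while for a general parallelizable Hessian manifold one exploits the trivializing parallel frame. Once this point is set up carefully, the rest of the argument is simply a citation of Shi (including his statement that K\"ahlerity is preserved) and Chen-Zhu.
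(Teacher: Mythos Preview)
Your proposal is correct and follows essentially the same approach as the paper: the paper's proof consists of the single sentence ``Taken together, the previous two results show the following,'' where the two results are Shi's existence theorem and Chen--Zhu's uniqueness and isometry-preservation theorem. You have simply spelled out the details---in particular the construction of the global $\mathbb{R}^n$-action from parallelizability---that the paper leaves implicit.
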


For K\"ahler-Sasaki metrics where the underlying Hessian manifold $M$ is compact (but whose tangent bundle need not be trivial), we can also establish short time existence and uniqueness of this flow. To do so, we pass to the affine universal cover $\Omega$ of $M$. Doing so, we make use of the following result of Shima.

\begin{theorem*}[\cite{Shima} Theorem B]
Let $M$ be a connected Hessian manifold. If there
exists a subgroup $G$ of automorphisms of $M$ such that $G \backslash M$ is quasicompact, then the universal covering manifold of $M$ is a convex domain
in a real affine space. 
\end{theorem*}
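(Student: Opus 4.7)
The plan is to combine the developing map of the flat affine connection $D$ with the global strict convexity coming from the Hessian potential. Since $D$ is flat and torsion-free, the universal cover $\tilde M$ carries a local diffeomorphism $\operatorname{dev}: \tilde M \to \mathbb{R}^n$, equivariant with respect to the holonomy representation and unique up to an affine automorphism of the target. The goal is to upgrade $\operatorname{dev}$ to a diffeomorphism onto a convex open set. I would first assemble the local Hessian potentials into a global function $\tilde \Psi: \tilde M \to \mathbb{R}$ satisfying $\tilde g = D^2 \tilde \Psi$; this exists because any two local potentials differ by a $D$-affine function and $\tilde M$ is simply connected, so the obstruction cohomology class vanishes. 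Positivity of $g$ then forces $\tilde \Psi$ to be strictly convex along any $D$-geodesic, equivalently along any curve that develops to a straight line in $\mathbb{R}^n$.

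The next step is to use the quasicompactness hypothesis to turn this pointwise strict convexity into a global properness statement. I would lift $G$ to a group $\tilde G$ acting on $\tilde M$ by affine isometries with quasicompact quotient, and then correct $\tilde \Psi$ by a $\tilde G$-equivariant affine term so that sublevel sets of the corrected potential project to compact subsets of $\tilde G \backslash \tilde M$ and are therefore relatively compact in $\tilde M$. With this properness in hand, injectivity of $\operatorname{dev}$ and convexity of its image can be proved together: given two points in $\operatorname{dev}(\tilde M)$, lift the straight segment between them starting from a preimage; properness of the corrected $\tilde \Psi$ rules out the lift running out of $\tilde M$ in finite affine time, so the image is convex. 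A coincidence $\operatorname{dev}(p) = \operatorname{dev}(q)$ with $p \neq q$ is then excluded because two distinct lifts of the same developed segment would force $\tilde \Psi$ to agree to all orders along that affine direction at the common image point, contradicting strict convexity once the two lifts separate.

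The step I expect to be the main obstacle is the properness conversion above: translating ``quasicompact quotient'' into an actual coercivity estimate on $\tilde \Psi$ along developed lines. The affine correction must be chosen so that the resulting function is bounded below on fundamental domains of $\tilde G$ and grows sufficiently fast toward any ideal boundary point in $\mathbb{R}^n$. Without this step, even though $\tilde \Psi$ is strictly convex in the abstract, nothing prevents the developing image from being, for instance, a half-space covered non-injectively by $\tilde M$; it is precisely the cocompactness hypothesis on $G$ that forecloses this pathology and closes the proof.
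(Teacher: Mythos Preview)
The paper does not give its own proof of this theorem: it is quoted verbatim as a result of Shima (\cite{Shima}, Theorem~B) and invoked only as a black box, in order to pass to the affine universal cover of a compact Hessian manifold before applying Corollary~\ref{ExistenceUniqueness}. There is therefore nothing in the paper to compare your argument against.

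That said, your outline is broadly in the spirit of Shima's original argument: one does use the developing map of the flat connection $D$, one does patch the local potentials into a global strictly convex $\tilde\Psi$ on $\tilde M$, and quasicompactness of $G\backslash M$ is indeed the mechanism that forces the developing map to be a diffeomorphism onto a convex domain. Where your sketch is loosest is exactly where you flag it yourself: the ``properness conversion.'' Shima does not proceed by correcting $\tilde\Psi$ by an equivariant affine term; rather, he shows that the pullback Hessian metric on $\tilde M$ is complete (this is where quasicompactness enters), and then uses completeness together with strict convexity of $\tilde\Psi$ to run a geodesic continuation argument that simultaneously yields injectivity of $\operatorname{dev}$ and convexity of the image. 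Your injectivity paragraph (``agree to all orders along that affine direction \ldots contradicting strict convexity once the two lifts separate'') is not a workable argument as stated---strict convexity of a function does not by itself prevent a local diffeomorphism from identifying two points---and would need to be replaced by the completeness-based continuation argument.
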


Since the affine universal cover is a convex domain, its tangent bundle is a tube domain, so we can make use of the previous existence and uniqueness result. Furthermore, the Chen-Zhu theorem implies that the deck transformations remain isometries, so we quotient them out to solve the K\"ahler-Ricci flow on the original Hessian manifold $M$. Alternatively, we can establish existence and uniqueness for compact Hessian manifolds by using the Hesse-Koszul flow, which we discuss in the next subsection.


These results establish short-time existence and uniqueness for K\"ahler-Ricci flow on the spaces we will study in this paper, so we turn our attention to understanding the behavior in the large-time limit.

Using the intuition that K\"ahler-Ricci flow behaves similarly to a reaction-diffusion equation for Riemannian metrics, we might hope to show that the normalized flow converges to a steady state (i.e., a K\"ahler-Einstein metric). For an arbitrary K\"ahler metric, this need not be the case (see work of Song and Tian \cite{SongTian} studying singularity formation). 
However, in a recent paper \cite{Tong}, Tong showed that this is the case for complete K\"ahler metrics which have negatively pinched holomorphic sectional curvature.

\begin{theorem*}[\cite{Tong} Theorem 1.1]
Suppose $(\mathbb{M}^{2n}, \omega_0)$ is a complete K\"ahler manifold whose holomorphic sectional curvature is negatively pinched. Then the K\"ahler-Ricci flow with initial condition $\omega_0$ exists for all time and the normalized flow converges to a K\"ahler-Einstein metric of scalar curvature $-2$ which is uniformly equivalent to the original metric.
\end{theorem*}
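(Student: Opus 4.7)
The plan is to run the \emph{normalized} K\"ahler--Ricci flow ($\lambda = 2$ in \eqref{KRflowwithinitialconditions}) and derive time-independent a priori estimates. Since $\mathbb{M}$ is complete and non-compact one cannot apply Hamilton's tensor maximum principle directly; I would rely on the complete-manifold parabolic maximum principles of Shi and of Ni--Tam, together with barrier constructions that exploit the two-sided bound on the holomorphic sectional curvature of $\omega_0$.

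First I would work at the level of K\"ahler potentials, writing $\omega_t = \omega_0 + i\partial\bar\partial \varphi_t$ so that $\varphi$ satisfies a parabolic complex Monge--Amp\`ere equation of the shape
\[
\partial_t \varphi \;=\; \log \frac{(\omega_0 + i\partial\bar\partial \varphi)^n}{\omega_0^n} - \varphi + F_0,
\]
where $F_0$ is the Ricci potential of $\omega_0$, which is bounded by the curvature pinching. The parabolic maximum principle applied first to $\varphi$ and then to $\partial_t \varphi$ yields a uniform $C^0$ estimate $\|\varphi\|_\infty + \|\partial_t \varphi\|_\infty \le C$.

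The heart of the argument is the uniform upper second-order estimate $\omega_t \le C\,\omega_0$. I would apply the maximum principle to
\[
H \;=\; \log \operatorname{tr}_{\omega_t} \omega_0 \;-\; A\varphi
\]
for $A$ sufficiently large. Computing $(\partial_t - \Delta_{\omega_t})H$ produces curvature terms involving the bisectional curvature of $\omega_0$ which are precisely of the form controlled by a Royden/Chern--Lu Schwarz lemma: Royden's polarization identity converts the pinched upper bound on the holomorphic sectional curvature of $\omega_0$ into the inequality on the quartic form that appears, and the $-A\varphi$ term absorbs the remaining lower-order contributions while supplying a good negative coefficient on $\operatorname{tr}_{\omega_t}\omega_0$. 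The matching lower bound $\omega_t \ge c\,\omega_0$ then follows from this upper bound together with the $C^0$ control on $\partial_t \varphi$, because the Monge--Amp\`ere equation bounds $\log \det(g_0^{-1}g_t)$, so the product of the eigenvalues of $g_0^{-1}g_t$ is bounded above and below, and each eigenvalue being bounded above forces each to be bounded below.

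With metric equivalence established, higher-order estimates follow either from a Calabi-type computation for $|\nabla_{\omega_0} g_t|^2$ or from Evans--Krylov plus Schauder bootstrap, and Shi's local derivative-of-curvature estimates upgrade these to uniform $C^\infty_{\mathrm{loc}}$ bounds along the flow; this rules out finite-time singularities and gives long-time existence. For convergence of the normalized flow to a K\"ahler--Einstein metric of scalar curvature $-2$, I would monitor a suitable monotone energy (for instance $\int_{\mathbb{M}} |\partial_t \varphi|^2\,\omega_t^n$ on exhausting compact sets): combined with the uniform regularity, monotonicity forces $\partial_t \varphi \to 0$ locally smoothly along a subsequence, so the limit satisfies the K\"ahler--Einstein equation, and the uniform equivalence to $\omega_0$ is already encoded by the $C^0$ bound on $\varphi$. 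The hardest step is the upper second-order estimate: executing the Royden argument on a complete non-compact manifold requires a cutoff or maximum-principle-at-infinity device to legitimately take a supremum of $H$, and it is exactly the \emph{negative} pinching of the holomorphic sectional curvature of $\omega_0$ that yields the correct sign in the curvature term and keeps the maximum-principle argument alive.
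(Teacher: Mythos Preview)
The paper does not prove this statement at all: it is quoted verbatim as Theorem~1.1 of Tong \cite{Tong} and used as a black box in the section on analytic preliminaries (Subsection~\ref{Analyticpreliminaries}). There is therefore no ``paper's own proof'' to compare against.

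That said, your sketch is a faithful outline of the standard strategy for such results (and is in fact close to how Tong argues): parabolic Schwarz lemma via Royden's polarization to get $\omega_t \le C\omega_0$, Monge--Amp\`ere control of the volume form to get the lower bound, Shi-type derivative estimates for long-time existence, and a monotonicity argument for convergence. The one point the paper explicitly flags---and which you correctly identify as the crux---is that Royden's lemma \cite{Royden} is the essential ingredient converting the holomorphic sectional curvature pinching into the quartic bisectional-type inequality needed in the Schwarz computation; the paper remarks just after stating the theorem that this is why the hypothesis must be pinching for \emph{all} $(1,0)$-vectors, not merely polarized ones.
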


Using this result, Tong found an alternate proof for Wu and Yau's result on the existence of K\"ahler-Einstein metrics for negatively pinched spaces \cite{WuYau}. In this paper, we will use this theorem as a sufficient condition for the K\"ahler-Ricci flow to converge. This result does not imply that negative holomorphic sectional curvature is preserved under the flow. However, Wu and Yau showed that the holomorphic sectional curvature remains negatively pinched (with a weaker constant) for some definite amount of time.

It is worth emphasizing that Tong's result uses the assumption that the holomorphic sectional curvature is negatively pinched for all $(1,0)$ vectors. It seems likely that a version of this result holds for tube domains with negatively pinched polarized holomorphic sectional curvature given a sufficiently sharp lower bound on the orthogonal antibisectional curvature. Without the latter assumption, it does not seem possible to establish Royden's lemma \cite{Royden}, which plays an essential role in Tong's proof.

\subsection{The Hesse-Koszul Flow and affine geometry}

We can understand the K\"ahler-Ricci flow on tube domains in terms of a flow on the underlying Hessian manifold. Mirghafouri and Malek \cite{MirghafouriMalek} studied this flow, which evolves a Hessian manifold $(M, g_t ,D)$ by deforming the potential by the parabolic Monge-Amp\`ere equation\footnote{In their paper, the flow is expressed in terms of Koszul forms in affine geometry, but we will not use this terminology}:
\begin{equation} \label{HesseKoszulflow}
    \frac{\partial \Psi}{\partial t} = 2 \log \left( \det \left [ \frac{\partial \Psi}{\partial x^i \partial x^j}  \right ] \right ).
\end{equation}
Following the convention in \cite{PuechmorelTo}, we call this the \textit{Hesse-Koszul flow}. 
 Prior to this work, existence and uniqueness for the Hesse-Koszul flow had only been established for compact Hessian manifolds.\footnote{Corollary \ref{ExistenceUniqueness} provides short-time existence and uniqueness for the Hesse-Koszul flow for complete parallelizable Hessian manifolds whose curvature is bounded.} The long-time behavior of the Hesse-Koszul flow was recently studied by Puechmorel and T\^o \cite{PuechmorelTo}, who showed the following result.

\begin{theorem*}[\cite{PuechmorelTo} Theorem 2]
 Let $(M,g_0,D)$ be a compact Hessian manifold. Assume that the first affine Chern class of $M$ is negative. Starting from any Hessian metric g0, the normalized Hesse-Koszul flow exists for all time and converges in $C^\infty$ to a Hesse-Einstein metric\footnote{Hesse-Einstein metrics are the affine geometric equivalent of a K\"ahler-Einstein metrics.}
\end{theorem*}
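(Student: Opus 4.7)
The plan is to adapt Cao's classical argument for K\"ahler-Ricci flow on compact K\"ahler manifolds with $c_1 < 0$ to the Hessian setting. The assumption that the first affine Chern class is negative produces a reference Hesse-Einstein volume form $\Omega_{\text{ref}}$ (in the sense of affine geometry), and after normalization one can rewrite the Hesse-Koszul flow as a scalar parabolic Monge-Amp\`ere equation of the form
\begin{equation*}
\frac{\partial \varphi}{\partial t} \;=\; 2\log\frac{\det(g_0 + D^2 \varphi)}{\det g_0} + 2\varphi - f,
\end{equation*}
where $f$ is a fixed \emph{Ricci potential} of $g_0$ chosen so that stationary solutions are exactly Hesse-Einstein. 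First I would verify that this reformulation is globally well defined on $M$ even though a Hessian potential need only exist locally; this uses that the difference of two Hessian potentials on overlapping affine charts is an affine function, which is annihilated by $D^2$, so the class $[\varphi]$ lives in a suitable affine-cohomological quotient.

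Next I would establish the a priori estimates along the standard Yau hierarchy. The $C^0$ estimate should follow from a Moser iteration argument applied to the scalar flow, exploiting the $+2\varphi$ term coming from the negativity of the first affine Chern class, which provides a coercive sink that prevents $\|\varphi\|_\infty$ from blowing up. The $C^2$ estimate, i.e.\ uniform equivalence of $g(t) = g_0 + D^2 \varphi(t)$ with $g_0$, would be obtained by computing $(\partial_t - \Delta_{g(t)})(\operatorname{tr}_{g_0} g(t))$ in affine coordinates and applying a maximum principle; the curvature terms that appear are controlled because one is working on a fixed compact background Hessian manifold with bounded geometry. With $C^0$ and $C^2$ in hand, the parabolic Evans--Krylov theorem yields $C^{2,\alpha}$, and standard bootstrapping gives uniform $C^k$ bounds for all $k$, hence long-time existence.

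For convergence as $t \to \infty$, I would introduce a monotone functional playing the role of the Mabuchi K-energy: differentiate along the flow and show that, thanks to negativity of the affine Chern class, $\partial_t$ of this functional is a non-positive quantity that controls $\|\partial_t \varphi - \text{const}\|_{L^2}$. Combined with the uniform $C^k$ bounds, this forces $\partial_t \varphi \to \text{const}$ in every $C^k$ norm along subsequences; uniqueness of the Hesse-Einstein metric in its class then upgrades subsequential convergence to full $C^\infty$ convergence.

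The main obstacle will be the $C^0$ estimate, since the usual K\"ahler proof relies crucially on integration by parts against Monge-Amp\`ere measures on a compact K\"ahler manifold, whereas here the natural measure $\det(D^2 \Psi)\,dx$ is only defined on affine charts and must be glued using the affine structure; one must verify that the integration-by-parts identities survive this gluing, which is essentially the content of the affine Bedford--Taylor / Chern--Levine--Nirenberg calculus developed for Hessian manifolds. A secondary difficulty is the lack of a Calabi--Yau--type reference metric, so the normalization of $f$ must be done using affine Chern--Weil theory for the negative line bundle representing $-c_1^{\text{aff}}(M)$.
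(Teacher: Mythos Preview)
The paper does not prove this theorem at all: it is quoted verbatim as an external result from Puechmorel--T\^o \cite{PuechmorelTo} and used as a black box to deduce Corollary~\ref{CompactHessianEinstein}. There is therefore nothing in the paper to compare your proposal against.

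That said, your outline is essentially the strategy of the Puechmorel--T\^o paper itself, namely Cao's argument for compact K\"ahler manifolds with $c_1<0$ transplanted to the Hessian category. Two small remarks on your sketch. First, in the negative Chern class case the $C^0$ estimate does not require Moser iteration: the $+2\varphi$ term lets a direct parabolic maximum principle bound $\varphi$ and $\partial_t\varphi$ from above and below, exactly as in Cao's original proof. Second, the gluing worry you flag as the ``main obstacle'' is milder than you suggest: the quantity $\det(g_0+D^2\varphi)/\det g_0$ is already globally defined (it is a ratio of two global densities), so the scalar equation makes sense on $M$ without invoking affine Bedford--Taylor theory; the only place the affine structure genuinely enters is in identifying the reference form representing $-c_1^{\mathrm{aff}}(M)$.
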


By combining this result and Corollary \ref{NegativecostcurvedKahlerEinstein}, we obtain the following result, which is simply a restatement of Corollary \ref{CompactHessianEinstein} in the language of affine geometry.

\begin{corollary*}
Let $(M, g_0, D)$ be a compact Hessian manifold. Suppose that $g_0$ has non-positive anti-bisectional curvature and that the first affine Chern class of $M$ is negative. Then under the normalized Hesse-Koszul flow, $(M, g_t, D)$ converges to a Hesse-Einstein metric $g_\infty$ which is negatively cost-curved.
\end{corollary*}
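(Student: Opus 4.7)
The plan is to observe that this corollary is a direct translation of Corollary \ref{CompactHessianEinstein} into the language of affine/Hessian geometry, via the dictionary between the Hesse-Koszul flow on $M$ and the K\"ahler-Ricci flow on $TM$ equipped with its K\"ahler-Sasaki metric.

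First, I would identify the two flows. By definition (see Equations \eqref{KahlerRicciflow} and \eqref{HesseKoszulflow}), deforming the Hessian potential on $M$ by the parabolic Monge-Amp\`ere equation
\[
\tfrac{\partial}{\partial t}\Psi = 2\log\det\bigl[\partial_i\partial_j \Psi\bigr]
\]
is equivalent, after horizontal lift $\Psi^h$ (see Equation \eqref{horizontal lift}), to the K\"ahler-Ricci flow for $\omega_t = \sqrt{-1}\,\partial\bar\partial\Psi^h$ on $TM$. The normalization $\lambda = 2$ matches the two normalized flows. Uniqueness of the K\"ahler-Ricci flow and the Chen--Zhu preservation of the isometry group force the flow on $TM$ to retain its $\mathbb{R}^n$-fiber symmetry, so it remains K\"ahler-Sasaki and descends to a flow of potentials on the base.

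Second, I would invoke Corollary \ref{CompactHessianEinstein}. Its hypotheses (compact Hessian $M$, non-positive anti-bisectional curvature, negative first affine Chern class) are identical to the hypotheses here, and its conclusion is that the normalized K\"ahler-Ricci flow on $TM$ converges in $C^\infty$ to a K\"ahler-Einstein metric $\omega_\infty$ that is negatively cost-curved. By the previous step, $\omega_\infty$ is K\"ahler-Sasaki, so it is the lift of a Hesse-Einstein metric $g_\infty$ on $M$. Since the curvature of a K\"ahler-Sasaki metric is determined by the Hessian curvature of its base, the defining conditions of ``negatively cost-curved'' (non-positive anti-bisectional plus a uniform negative upper bound on polarized holomorphic sectional curvature) descend to well-defined affine-geometric invariants of $g_\infty$.

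The main work is not in this translation but in Corollary \ref{CompactHessianEinstein} itself, which rests on three ingredients: Puechmorel--T\^o's long-time existence and convergence of the normalized Hesse-Koszul flow under negativity of the first affine Chern class, Theorem \ref{Main theorem} ensuring that non-positive anti-bisectional curvature is preserved all the way to the limit, and a final upgrade from non-positive polarized holomorphic sectional curvature to a uniform strictly negative upper bound on the Einstein limit. This last step is where I expect the principal technical obstacle: non-positive anti-bisectional curvature together with $\operatorname{Ric}=\lambda g$, $\lambda<0$, does not by itself preclude the vanishing of $H$ on isolated polarized directions, so one needs a strong maximum principle applied to the reaction-diffusion equation governing $H$ along the flow, combined with the compactness of $M$ to promote pointwise strict negativity at the limit into a uniform gap.
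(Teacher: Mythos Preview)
Your proposal is correct and mirrors the paper's approach: the corollary is explicitly presented there as a restatement of Corollary \ref{CompactHessianEinstein} in affine language, obtained by combining Puechmorel--T\^o's convergence theorem with Theorem \ref{Main theorem} and the argument behind Corollary \ref{NegativecostcurvedKahlerEinstein}. For the final ``upgrade'' step you flag, the paper's mechanism is slightly more specific than a generic strong maximum principle: at a point and polarized direction $\E_1$ where $H(\E_1)=0$, stationarity of the Einstein metric forces the reaction term to vanish, the sharpened inequality \eqref{strongernegativityofQ} then forces $R(\E_1,\overline\E_1,\E_i,\overline\E_j)=0$ for all $i,j$, and Lemma \ref{BergersRicciformula} yields $\operatorname{Ric}(\E_1,\overline\E_1)=0$, contradicting the negative Einstein constant.
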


For complete Hessian manifolds with bounded curvature\footnote{Bounded curvature here means that the tangent bundle has bounded curvature, not simply the underlying Hessian manifold.} but non-trivial tangent bundle, the existence and uniqueness of the Hesse-Koszul flow has not been established. Presumably, this can be done by adapting Shi's work to this setting, after which one can then apply the Chen-Zhu uniqueness theorem immediately. However, we will not consider these spaces in this paper.

\subsubsection{Advantages and disadvantages of the Hesse-Koszul flow}

For readers whose primary interest is affine geometry, it might be more natural to rephrase our results in terms of Hessian manifolds and the Hesse-Koszul flow. Indeed, the distinction between K\"ahler-Ricci flow for tube domains and the Hesse-Koszul flow is mainly a matter of terminology. Using affine geometry, it is simpler to define the notion of signed anti-bisectional curvature. Furthermore, some of the calculations simplify when stated in terms of Hesse-Koszul flow, as there is no need to pass back and forth from the tangent bundle to the underlying Hessian manifold.  However, there are two disadvantages of working directly in terms of the Hesse-Koszul flow.

First, the analytic results that we rely on were proven for Ricci flow. In order to work with Hesse-Koszul flow, it would necessary to translate these results into the language of affine differential geometry. In particular, rephrasing Tong's result in terms of affine geometry is quite awkward.

Second, one of our primary motivations for considering these flows are to understand the interaction between Ricci curvature and holomorphic sectional curvature. There is an extensive body of literature studying this question in K\"ahler geometry but no corresponding body of literature studying the relationship between the second Koszul form and the Hessian sectional curvature for Hessian manifolds\footnote{Shima defines the Hessian sectional curvature to be the negative of what we call the polarized holomorphic sectional curvature \cite{Shimabook}.}.
For these two reasons, we will state our results in terms of K\"ahler-Ricci flow and complex geometry.

\section{Negative anti-bisectional curvature is preserved by K\"ahler-Ricci Flow}
\label{Proofofmaintheorem}

In this section, we prove Theorem \ref{Main theorem}, which shows that K\"ahler-Ricci flow preserves negative anti-bisectional curvature. We want a version that applies to compact and non-compact spaces alike. Because of this, instead of simply checking the null-vector condition, we adapt an argument by Shi, which originally showed that non-negative bisectional curvature is preserved along K\"ahler-Ricci flow (Theorem 5.3 of \cite{Shi}). For readers who are interested in the case of compact Hessian manifolds, it is possible to skip Subsection \ref{Shimaximumprinciple} and simply use $\widetilde R$ in place of $A$ in Subsection \ref{VerificationofNVC}



\subsection{Uhlenbeck's trick}

Let $(\mathbb{M}, \omega_t)$ be a family of complete K\"ahler-Sasaki metrics which evolve according to the unnormalized K\"ahler-Ricci flow. As discussed in Subsection \ref{BackgroundonKRflow}, the curvature of $\omega_t$ evolves via the equation
\begin{eqnarray}
\frac{ \partial} {\partial t} R_{i \overline j k \overline \ell} & = & \Delta R_{i \overline j k \overline \ell} \nonumber \\
& & + 2 \left( R_{i \overline j p \overline r} R_{s \overline q k \overline \ell} h^{\overline p  q} h^{r \overline s} +  R_{i \overline \ell p \overline r} R_{s \overline q k \overline j} h^{\overline p  q} h^{r \overline s}-  R_{i \overline q k \overline r} R_{p \overline j s \overline \ell} h^{\overline p  q} h^{r \overline s} \right) \label{Essentialreaction} \\
& & - \left( R_{i \overline q} R_{p \overline j k \overline \ell} h^{\overline p  q} + R_{p \overline j} R_{i \overline q k \overline \ell} h^{\overline p  q}+ R_{k \overline q} R_{i \overline j p \overline \ell} h^{\overline p  q}+R_{p \overline \ell} R_{i \overline j k \overline q} h^{\overline p  q} \right) \label{Uhlenbeckterms2}
\end{eqnarray}

The reaction terms in this expression (lines \eqref{Essentialreaction} and \eqref{Uhlenbeckterms2}) are particularly unwieldy, so we make use of a technique known as Uhlenbeck's trick to simplify the expression. We consider an abstract vector bundle $V$ which is isomorphic to the complex tangent bundle $T_{\mathbb{C}} \mathbb{M}$. Furthermore, we fix a metric $\tilde h_{AB}$ on the fibers of $V$ which is $\mathbb{R}^n$-symmetric\footnote{With Uhlenbeck's trick, normally one can choose $\tilde h$ arbitrarily, Here, it is important to respect the translation symmetry.}. Using this symmetry, we also have a time invariant metric on $\Omega$ as well, which we denote $\tilde g$.

We choose an isometry $\mathcal{U} = \left \{ \mathcal{U}_i^j \right \} $ between $V$ and $T \mathbb{M}$ and let this isometry evolve by the equation
\begin{equation} \label{Uhlenbecktrick}
    \frac{\partial}{\partial t} \mathcal{U}_j^i = h^{ik} \textrm{Ric}_{k \ell} \mathcal{U}_j^\ell.
\end{equation}

Furthermore, we can pull over the Levi-Civita connection of the metric $h$ to the abstract vector bundle $V$ and compute the curvature tensor, which we denote $\widetilde{R}$. Although this is conceptually more complicated, the advantage of doing this is that it greatly simplifies the curvature evolution equations.

\[ 
\frac{\partial}{\partial t} \widetilde{R}_{i \overline j k \overline \ell}  =  \Delta \widetilde{R}_{i \overline j k \overline \ell}+ 2 \left( \widetilde{R}_{i \overline j p \overline r} \widetilde{R}_{s \overline q k \overline \ell} \widetilde{h}^{\overline p  q} \widetilde{h}^{r \overline s} +  \widetilde{R}_{i \overline \ell p \overline r} \widetilde{R}_{s \overline q k \overline j} \widetilde{h}^{\overline p  q} \widetilde{h}^{r \overline s}-  \widetilde{R}_{i \overline q k \overline r} \widetilde{R}_{p \overline j s \overline \ell} \widetilde{h}^{\overline p  q} \widetilde{h}^{r \overline s} \right)
\]

We now pick holomorphic coordinates $\{ z \}$ so that 
$\tilde h$ is the identity at a point $p \in \MM$. We also insist that the real parts of $z$ give affine coordinates of $\Omega$ (i.e., the coordinates respect the polarization of $\mathbb{M}$). In other words, we pick coordinates which respect the affine structure and translation symmetry of $\mathbb{M}$.
Using the fact that Ricci flow preserves the K\"ahlerness of the metric (Theorem 5.1 of Shi \cite{Shi}), the curvature evolution further simplifies (at $p$) to the following:
\begin{eqnarray}\label{Curvatureevolutionsimple}
\frac{ \partial} {\partial t} \widetilde R_{i \overline j k \overline \ell} & = & \Delta \widetilde R_{i \overline j k \overline \ell} + 2\widetilde R_{i \overline j p \overline q} \widetilde R_{q \overline p k \overline \ell} + 2\widetilde R_{i \overline \ell p \overline q} \widetilde R_{q \overline p k \overline j}-  2\widetilde R_{i \overline p k \overline q} \widetilde R_{p \overline j q \overline \ell}
\end{eqnarray}

We now define the quadratic $Q$ to be
\begin{equation} \label{DefinitionofQ}
    Q(\widetilde R)_{i \overline j k \overline \ell} = 2 \left ( \widetilde  R_{i \overline j p \overline q}  \widetilde R_{q \overline p k \overline \ell} +\widetilde R_{i \overline \ell p \overline q} \widetilde R_{q \overline p k \overline j}- \widetilde  R_{i \overline p k \overline q} \widetilde R_{p \overline j q \overline \ell} \right ),
\end{equation} 
so that the curvature evolution can be written as
\begin{equation}\label{CurvatureevolutionwithQ}
    \frac{ \partial} {\partial t} \widetilde R_{i \overline j k \overline \ell}  =  \Delta \widetilde R_{i \overline j k \overline \ell}  + Q(\widetilde R)_{i \overline j k \overline \ell}.
\end{equation}

By definition, the anti-bisectional curvature is non-positive if and only if
for any $v, w  \in T^{real} \Omega$,
\begin{equation}
    R(v^{pol.}, \overline w^{pol.}, v^{pol.}, \overline w^{pol.}) \leq 0.
\end{equation}

From the fact that our choice of coordinates on $V$ respect the tube domain structure and polarization, we have that the following: 
\begin{equation}\label{AnegativeifftildeAnegative}
    R(v^{pol.}, \overline w^{pol.}, v^{pol.}, \overline w^{pol.}) \leq 0 \iff \widetilde R(v^{pol.}, \overline w^{pol.}, v^{pol.}, \overline w^{pol.}) \leq 0. 
\end{equation}  

As such, it remains to show that the anti-bisectional curvature of $\widetilde{R}$ is non-positive. To do this, we make use of a maximum principle for complete spaces that was proven by Shi.

\subsection{Shi's Maximum Principle}
\label{Shimaximumprinciple}
For complete manifolds with bounded curvature that evolve under Ricci flow, Shi established the following maximum principle.

\begin{theorem*}[\cite{Shi} Theorem 4.8]
Suppose that $(\mathbb{M}^n, g(t))$ is a complete Ricci flow with uniformly bounded curvature at each time-slice. Suppose that $\varphi(z,t)$ is a $C^\infty$ function on $\mathbb{M} \times [0,T]$ such that

\begin{equation*}
  \left \{  \begin{aligned}
    \frac{\partial \varphi}{\partial t}=\Delta \varphi+C_{1}\left|\nabla_{k} \varphi\right|^{2}+Q(\varphi, z, t) & \text { on } \mathbb{M} \times[0, T] \\
    \varphi(z, t) \leq C_{2}<+\infty, \quad & \text { on } \mathbb{M} \times[0, T] \\
    \varphi(z, 0) \leq 0, \quad & \text { on } \mathbb{M} \\
    Q(\varphi, z, t) \leq C_{3} \varphi, \quad & \text { if } \varphi \geq 0,
    \end{aligned} \right.
\end{equation*}
where $C_1,\, C_2$ and $C_3$ are non-negative constants. Then we have
\begin{equation}
    \varphi(z, 0) \leq 0, \quad \text { on } \mathbb{M} \times[0, T]  
\end{equation}
\end{theorem*}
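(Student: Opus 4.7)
The plan is to adapt the standard parabolic maximum principle by a barrier function argument, since on a complete non-compact manifold the supremum of $\varphi$ need not be attained and the compact version fails outright. First I would reduce to the simpler situation in which the reaction is nonpositive wherever $\varphi \geq 0$: substituting $\varphi \mapsto e^{-C_3 t}\varphi$ converts the hypothesis $Q \leq C_3\varphi$ into a new reaction $\widetilde Q = e^{-C_3 t}(Q - C_3 \varphi)$ that is nonpositive whenever the new $\varphi \geq 0$, while the bound $\varphi \leq C_2$ and the initial condition are preserved. So from now on we may assume $Q \leq 0$ whenever $\varphi \geq 0$.

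Next I would construct a smooth exhaustion $\rho : \mathbb{M} \to [1,\infty)$ with $\rho(z) \to \infty$ as $d(z,p_0) \to \infty$ for some fixed basepoint $p_0$, and with $|\nabla\rho|_{g(t)}$ and $|\Delta_{g(t)}\rho|$ bounded uniformly on $\mathbb{M}\times[0,T]$. Such a $\rho$ exists because $g(t)$ has uniformly bounded sectional curvature on the time interval: one smoothes the distance function in the sense of Greene-Wu and controls its first and second derivatives via the Laplace and Hessian comparison theorems, exploiting also that the metrics at different times are uniformly equivalent. For $\epsilon \in (0,1]$ and a constant $A > 0$ to be chosen, set
\[ \varphi_\epsilon(z,t) := \varphi(z,t) - \epsilon\bigl(\rho(z) + At + 1\bigr). \]
Because $\varphi$ is bounded above while $\rho(z)+At+1 \to \infty$ at spatial infinity, and $\varphi_\epsilon(\cdot,0) \leq -\epsilon < 0$, either $\varphi_\epsilon \leq 0$ throughout $\mathbb{M}\times[0,T]$, or there is a first time $t_0 \in (0,T]$ and a point $z_0 \in \mathbb{M}$ at which $\varphi_\epsilon(z_0,t_0) = 0$; at such a point one has $\nabla\varphi_\epsilon = 0$, $\Delta\varphi_\epsilon \leq 0$, and $\partial_t\varphi_\epsilon \geq 0$.

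Translating these into statements about $\varphi$ and substituting into the PDE, and using that $\varphi(z_0,t_0) = \epsilon(\rho+At_0+1) > 0$ so $Q(z_0,t_0) \leq 0$, one obtains
\[ \epsilon A \leq \epsilon\,\Delta\rho + C_1\epsilon^2|\nabla\rho|^2 \leq \epsilon\bigl(\sup|\Delta\rho| + C_1\sup|\nabla\rho|^2\bigr), \]
where the last step uses $\epsilon \leq 1$. Choosing $A$ strictly larger than $\sup|\Delta\rho| + C_1\sup|\nabla\rho|^2$ produces a contradiction, so $\varphi_\epsilon \leq 0$ on $\mathbb{M}\times[0,T]$; letting $\epsilon \downarrow 0$ gives $\varphi \leq 0$, as claimed. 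The main obstacle is the construction of the exhaustion $\rho$ with simultaneously bounded gradient and Laplacian along the evolving metric: this is where the completeness of $\mathbb{M}$ and the bounded-curvature hypothesis enter essentially, and once $\rho$ is in hand the remainder is a direct barrier computation.
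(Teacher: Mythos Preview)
The paper does not prove this statement; it is quoted verbatim from Shi's paper \cite{Shi} (Theorem 4.8) and used as a black box in the proof of Theorem \ref{Main theorem}. Your barrier-function argument is essentially the standard proof of such maximum principles on complete non-compact manifolds with bounded curvature, and is in line with Shi's original argument: reduce the reaction term, build a proper exhaustion with controlled gradient and Laplacian, and derive a contradiction at a first interior touching point. One small point worth tightening: under the substitution $\psi = e^{-C_3 t}\varphi$, the gradient term becomes $C_1 e^{C_3 t}|\nabla\psi|^2$, i.e.\ the coefficient is time-dependent; on the finite interval $[0,T]$ it is bounded by $C_1 e^{C_3 T}$, so the rest of the argument goes through with this larger constant, but you should say so explicitly.
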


In order to apply this result, we must find an appropriate function $\varphi$ to apply this result. At each point in space-time $(z,t) \in \mathbb{M} \times [0,T)$, we consider the space of polarized unit vectors
 \begin{equation}
     S(z,t) :=  \{ \V = v^{pol.}~|~ v \in T^{real} \Omega \textrm{ with } \widetilde g_z(v,v)=1 \}, 
 \end{equation}
Note that the norm here is given with respect to the fixed metric $\widetilde g$, which is invariant in time. We then define a function
\begin{equation}
    \varphi(z,t) = - \sup \{ \theta \in \mathbb{R} ~|~ A( v^{pol.}, \overline{w^{pol.}} , v^{pol.}, \overline{w^{pol.}} ) (\theta, z,t) \leq 0 \textrm{ for all } v,w \in S(z,t) \} 
\end{equation}
where the tensor $A(\theta, z,t)$ is defined as
\begin{equation}\label{DefinitionofA}
    A(\theta)_{i \overline j k \overline \ell} = \widetilde R_{i \overline j k \overline \ell} + \theta \tilde g_{ik}  \tilde g_{\overline j \overline \ell}.
\end{equation}

Note that the definition of $A(\theta, z ,t)$ is slightly different from the definition of $A$ used by Shi. This is due to the difference between bisectional and anti-bisectional curvature. Also, note that we are passing freely between vectors on $\Omega$ and their lifts on $\MM$.

We then define the tensor $A(z,t)$ to be 
\begin{equation}
    A := A(-\varphi(z,t),z,t).
\end{equation} In other words, at each point of space-time which choose the supremum of $\theta$ so that $A(\theta,z,t)$ is non-positive definite at that point. The unusual sign convention for $\varphi$ is done so that we can use Shi's maximum principle verbatim without reversing inequalities.
To use the maximum principle, much of the proof follows Theorem 5.3 from \cite{Shi} with slight modifications. However, the proof of the null-vector condition (i.e., that $Q$ is ``inward pointing") is somewhat different, so we provide the details for that step.

\subsection{Verification of the null-vector condition}
\label{VerificationofNVC}

\begin{lemma}
Suppose $\{A_{i \overline j k \overline \ell} \}$ is a tensor which has the same symmetries as $\{R_{i \overline j k \overline \ell} \}$. As before, we set
\begin{equation}\label{Quadratic}
    Q(A)_{i \overline j k \overline \ell}  = 2\left(  A_{i \overline j p \overline q} A_{q \overline p k \overline \ell} + A_{i \overline \ell p \overline q} A_{q \overline p k \overline j}- A_{i \overline p k \overline q} A_{p \overline j q \overline \ell} \right),  
\end{equation}
where we assume $\tilde g$ is the identity at a point $z$ (and that the metric respects the translation symmetry). Suppose for a fixed point $z \in \mathbb{M}$, we have that
\begin{enumerate}
    \item $A(\X, \overline \Y, \X , \overline \Y) \leq 0$ for all $\X = x^{pol.},\Y = y^{pol.}$ with $x,y \in T^{real} \Omega$, and
     \item $A(\V, \overline \W, \V , \overline \W) = 0$ for some $\V = v^{pol.}, \W=w^{pol.}$.
\end{enumerate}
Then 
\begin{equation} \label{SignofQ}
Q(A)(\V, \overline \W, \V, \overline \W) \leq 0.
\end{equation}

\end{lemma}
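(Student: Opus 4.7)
The plan is to treat the hypothesis as saying that $(\V,\W)$ realizes an interior maximum of $H(\X,\Y):=A(\X,\overline\Y,\X,\overline\Y)$ on the real vector space of polarized pairs, and then to reduce the desired inequality to linear algebra using the first- and second-order conditions at this maximum.

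First I would perturb by arbitrary polarized $\Z,\mathcal{T}$ and real $t,s$: the polynomial $g(t,s):=H(\V+t\Z,\W+s\mathcal{T})$ is non-positive everywhere and attains its maximum value $0$ at $(0,0)$. Using the K\"ahler symmetries $A_{i\bar{j} k\bar{\ell}}=A_{k\bar{j} i\bar{\ell}}=A_{k\bar{\ell} i\bar{j}}$ to collapse derivatives, the vanishing of the first derivatives forces
\[
A(\Z,\overline\W,\V,\overline\W)=0, \qquad A(\V,\overline{\mathcal{T}},\V,\overline\W)=0
\]
for all polarized $\Z,\mathcal{T}$, and the Hessian of $g$ being negative semidefinite at $(0,0)$ gives, after the same symmetry simplification,
\[
2 A(\Z,\overline\W,\Z,\overline\W) + 2 A(\V,\overline{\mathcal{T}},\V,\overline{\mathcal{T}}) + 8 A(\V,\overline\W,\Z,\overline{\mathcal{T}})\le 0.
\]

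Next I would choose a polarized unitary basis $\{e_p\}$ at the point (possible because $\tilde g$ is the identity there) and introduce the real matrices
\[
\mathbf{M}_{pq}:=A(\V,\overline{e_p},\V,\overline{e_q}),\quad \mathbf{N}_{pq}:=A(e_p,\overline\W,e_q,\overline\W),\quad \boldsymbol{\Lambda}_{pq}:=A(\V,\overline\W,e_p,\overline{e_q}).
\]
Both $\mathbf{M}$ and $\mathbf{N}$ are symmetric (swap first and third indices) and negative semidefinite by hypothesis~(1). Writing the Hessian condition for $\Z=\alpha^p e_p$, $\mathcal{T}=\beta^q e_q$ becomes the block matrix inequality $\left(\begin{smallmatrix}\mathbf{N} & 2\boldsymbol{\Lambda} \\ 2\boldsymbol{\Lambda}^T & \mathbf{M}\end{smallmatrix}\right)\preceq 0$, which by Schur complement is equivalent to $-\mathbf{N}\succeq 4\boldsymbol{\Lambda}(-\mathbf{M})^{-1}\boldsymbol{\Lambda}^T$. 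Independently, a direct contraction of $Q$ with $v^i w^j v^k w^\ell$ and the K\"ahler symmetries shows that the first two terms of $Q$ each equal $\operatorname{tr}(\boldsymbol{\Lambda}^2)=\sum_{p,q}\Lambda_{pq}\Lambda_{qp}$ while the third equals $\operatorname{tr}(\mathbf{M}\mathbf{N})$, so
\[
Q(A)(\V,\overline\W,\V,\overline\W)=2\bigl(2\operatorname{tr}(\boldsymbol{\Lambda}^2)-\operatorname{tr}(\mathbf{M}\mathbf{N})\bigr),
\]
and it suffices to show $\operatorname{tr}(\mathbf{M}\mathbf{N})\ge 2\operatorname{tr}(\boldsymbol{\Lambda}^2)$.

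Finally, taking the trace against the positive semidefinite matrix $-\mathbf{M}$ yields
\[
\operatorname{tr}(\mathbf{M}\mathbf{N})\ge 4\operatorname{tr}\bigl((-\mathbf{M})\boldsymbol{\Lambda}(-\mathbf{M})^{-1}\boldsymbol{\Lambda}^T\bigr)=4\|\mathbf{A}\|_F^2,
\]
where $\mathbf{A}:=(-\mathbf{M})^{1/2}\boldsymbol{\Lambda}(-\mathbf{M})^{-1/2}$. Decomposing $\mathbf{A}=\mathbf{S}+\mathbf{K}$ into symmetric and antisymmetric parts gives $\|\mathbf{A}\|_F^2=\|\mathbf{S}\|_F^2+\|\mathbf{K}\|_F^2$ while $\operatorname{tr}(\mathbf{A}^2)=\|\mathbf{S}\|_F^2-\|\mathbf{K}\|_F^2=\operatorname{tr}(\boldsymbol{\Lambda}^2)$ by cyclic permutation, so $4\|\mathbf{A}\|_F^2\ge 2\operatorname{tr}(\boldsymbol{\Lambda}^2)$, completing the proof. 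The hardest step will be this final trace manipulation: the Schur complement alone only bounds $-\mathbf{N}$ from below, and converting it into control of $\operatorname{tr}(\mathbf{M}\mathbf{N})$ requires the symmetric/antisymmetric split of $\mathbf{A}$; the factor $\tfrac12$ in $\operatorname{tr}(\mathbf{A}\mathbf{A}^T)\ge \tfrac12\operatorname{tr}(\mathbf{A}^2)$ is matched exactly by the factor $2$ produced by the coincidence of the first two terms of $Q$ under the K\"ahler symmetries, so the proof is tight. If $\mathbf{M}$ is singular, one handles this by replacing it with $\mathbf{M}-\varepsilon I$ and passing to the limit.
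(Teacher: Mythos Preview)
Your argument is correct and arrives at the same inequality as the paper, but the key linear-algebra step is handled by a genuinely different route.

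Both proofs begin identically: the first- and second-variation conditions at the maximum $(\V,\W)$ yield the negative semidefiniteness of the block matrix $\left(\begin{smallmatrix}\mathbf{N} & 2\boldsymbol{\Lambda}\\ 2\boldsymbol{\Lambda}^{T} & \mathbf{M}\end{smallmatrix}\right)$ (in your notation), and both reduce the claim to the trace inequality $\operatorname{tr}(\mathbf{M}\mathbf{N})\ge 4\operatorname{tr}(\boldsymbol{\Lambda}^{2})$. The paper proves this via a lemma attributed to Hamilton: conjugating $G_{1}=\left(\begin{smallmatrix}-\mathbf{N} & -2\boldsymbol{\Lambda}\\ -2\boldsymbol{\Lambda}^{T} & -\mathbf{M}\end{smallmatrix}\right)$ by $J=\left(\begin{smallmatrix}0 & I\\ -I & 0\end{smallmatrix}\right)$ produces a second positive semidefinite matrix $G_{2}$, and then $\operatorname{tr}(G_{1}G_{2})\ge 0$ gives the result in one stroke. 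You instead take the Schur complement, trace against $-\mathbf{M}$, and use the elementary fact $\operatorname{tr}(\mathbf{A}\mathbf{A}^{T})\ge \operatorname{tr}(\mathbf{A}^{2})$ for real matrices (your symmetric/antisymmetric split). Incidentally, that split actually gives the full factor $4$, not just the factor $2$ you claim, so your proof recovers the paper's sharper inequality $Q(A)(\V,\overline{\W},\V,\overline{\W})+\operatorname{tr}(\mathbf{M}\mathbf{N})\le 0$ as well. The paper's $J$-conjugation trick is slicker and needs no invertibility hypothesis on $\mathbf{M}$, whereas your Schur-complement route requires the $\mathbf{M}\to\mathbf{M}-\varepsilon I$ limiting argument; on the other hand, your approach is arguably more transparent, relying only on standard Schur complements rather than a special conjugation identity.
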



\begin{proof}

To prove this result, we adapt an argument of Cao \cite{Cao} (but which was originally shown Mok \cite{Mok}). We start with two small lemmas.

\begin{lemma}\label{FirstderivA}
 For any vector $\X = x^{pol.}$,
\begin{equation}
    A_{\V \overline \X \V \overline \W} =A_{\X \overline \W \V \overline \W} = 0. 
\end{equation}
\end{lemma}

To see this, consider $G(s)= A(\V+s \X, \overline \W, \V+s\X, \overline \W)$. $G(s)$ is maximized at $0$, so $G^\prime(0) = 0$. Computing $G^\prime(0)$, we find the following:
\begin{equation}
    G^\prime(0) = R_{\X \overline \W \V \overline \W} + R_{\V \overline \W \X \overline \W} =  2 R_{\X \overline \W \V \overline \W}.
\end{equation}
Similarly, we can show the same for $\overline G(s) = R(\V, \overline \W+ s \overline \X, \V, \overline \W+s \overline \X) $.

\begin{lemma}
 For any two vectors $\X = x^{pol.}$ and $\Y = y^{pol.}$,
\begin{equation} A_{\Y \overline \W \Y \overline \W} +A_{\V \overline \X \V \overline \X} + 4 A_{\Y \overline \X \V \overline \W} \leq 0. \end{equation}
\end{lemma}

We consider the function 
\begin{equation}H(s) := A(\V+s \Y, \overline \W + s \overline \X, \V+s \Y, \overline \W+ s \overline \X).\end{equation}
Since $H$ is maximized at $s=0$, we have that $H(0)=H^\prime(0) =0$ and $H^{\prime \prime}(0)\leq 0$.
Calculating $H^{\prime \prime}$, we find that
\begin{eqnarray}
H^{\prime \prime}(0) & = & A_{\Y \overline \W \Y \overline \W} +A_{\V \overline \X \V \overline \X} \nonumber \\
& & + A_{\Y \overline \X \V \overline \W} +A_{\V \overline \W \Y \overline \X } + A_{\Y \overline \W \V \overline \X } + A_{\V \overline \X \Y \overline \W } \nonumber \\
& = & A_{\Y \overline \W \Y \overline \W} +A_{\V \overline \X \V \overline \X} + 4 A_{\Y \overline \X \V \overline \W}.
\end{eqnarray}

We can use this $H$ to define a semi-positive definite bilinear form $\mathcal{H}$. To do so, we consider an  an orthonormal basis $\{ e_i\}$ of $T^{real} \Omega$ and its lifts $\{\E_i = e_i^{pol.}\}$, which form a polarized unitary basis of $T \MM$. For vectors $\Xi = \xi^i \E_i$ and $\Z = \zeta^i \E_i$ (where all the coefficients are real), we define $\mathcal{H}(\Xi,\Z)$ as follows:
\begin{equation}\label{mathcalH}
    \mathcal{H}(\Xi,\Z) = A(\E_i, \overline \W, \E_j, \overline \W) \xi^i \xi^j +A(\V, \overline \E_i,\V, \overline \E_j) \overline \zeta^i \overline \zeta^j + 4 A(\E_i, \overline \E_j,\V, \overline \W) \xi^i \overline \zeta^j
 \end{equation}

The estimate on $H^{\prime \prime}$ shows that
\begin{equation}\label{mathcalHisnegative} \mathcal{H}(\Xi,\Z) \leq 0.
\end{equation} 

We now make use of the following lemma to show that $Q$ is non-positive. This lemma was originally stated by Cao (\cite{CaoHarnack} Lemma 4.1) and is attributed to Hamilton. However, there are some typos in the original proof\footnote{ In particular, the matrices $G_2$ and $G_3$ defined by Cao need not be semi-positive definite. For instance, if we set  \[G_1 = \left[ \begin{array}{cccc}
0 & 0 & 0  & 0 \\
0 & 1 & 1 &  0  \\
0 & 1 & 1 &  0  \\
0 & 0 & 0  & 0 \\
\end{array} \right], \] both $G_2$ and $G_3$ will have negative eigenvalues.}, so we provide a detailed argument.

\begin{lemma}\label{NullVectorCrux}
Let $M_1$ and $M_2$ be two $n \times n$ real symmetric semi-positive definite
matrices, and let $N$ be a real $n \times n$ matrix such that the $2n \times 2n$ real symmetric
matrix \begin{equation}
     G_1 =
\left[
\begin{array}{cc}
M_1  & N \\
 N^T & M_2  \\
\end{array}
\right]
\end{equation} 
is semi-positive definite. Then, we have that
\begin{equation} \label{positivetrace}
    \Tr(M_1 M_2 -N^2) \geq 0.
\end{equation}
\end{lemma}

\begin{proof}
Consider the matrix 

\begin{equation} G_2 =
\left[
\begin{array}{cc}
M_2  & -N^T \\
 -N & M_1  \\
\end{array}
\right].  \end{equation}

If we conjugate $G_1$ by the skew-symmetric orthogonal matrix
\begin{equation} J =
\left[
\begin{array}{cc}
0  & Id_n \\
 -Id_n & 0  \\
\end{array}
\right], \end{equation}
we find that
the matrix $G_2$ is similar to $G_1$;
\begin{equation} \label{conjugate matrices}
    G_2 = J G_1 J^T.
\end{equation}

As a result, $G_1$ and $G_2$ have the same characteristic polynomial. Furthermore, since both matrices are symmetric, $G_2$ is semi-positive definite whenever $G_1$ is.

As a result, whenever $G_1$ is semi-positive definite, we have that \begin{equation}\Tr(G_1 G_2) \geq 0. \end{equation} Calculating this explicitly, we find that 
\begin{equation} G_{1} G_{2}=\left(\begin{array}{cc}
M_1 M_2-N N & -M_1 N^T + N M_1 \\
N^T M_2 - M_2 N  & M_2 M_1-N^{T} N^T
\end{array}\right).\end{equation}
Taking the trace (and noting that $\Tr(N^2) = \Tr(N^T N^T)$, we obtain the desired result.
\end{proof}

To make use of Lemma \ref{NullVectorCrux}, we set 
\begin{enumerate}
    \item $(M_1)_{ij} = -A(\E_i,\overline \W, \E_j, \overline \W)$
    \item $(M_2)_{ij} = -A(\V, \overline \E_i,\V, \overline \E_j)$, and
    \item $(N)_{ij} = - 2 A(\V, \overline \W, \E_i,\overline \E_j)$.
\end{enumerate}

Since $M_1, M_2$ and   $-\mathcal{H}$ are semi-positive definite, Equation \eqref{positivetrace} implies that
\begin{equation} \label{Quadraticdominance}
    A(\E_i,\overline \W, \E_j, \overline \W) A(\V, \overline \E_i,\V, \overline \E_j) \geq  4  A(\V,\overline \W, \E_i, \overline \E_j ) A(\V,\overline \W, \E_j, \overline \E_i ).
\end{equation}

Furthermore, we have that
\begin{equation} \label{Traceofproduct}
    A(\E_i,\overline \W, \E_j, \overline \W) A(\V, \overline \E_i,\V, \overline \E_j) \geq 0,
\end{equation}
since this is the trace of the product of semi-positive definite matrices.
Combining \eqref{Quadratic}, \eqref{Quadraticdominance} and \eqref{Traceofproduct}, we see that
\begin{equation} \label{negativityofQ}
Q(A)(\V, \overline \W, \V, \overline \W) \leq 0. \end{equation}

Note that \eqref{Quadraticdominance} implies the the sharper estimate \begin{equation}\label{strongernegativityofQ} Q(A) (\V, \overline \W, \V, \overline \W)+  A(\E_i,\overline \W, \E_j, \overline \W) A(\V, \overline \E_i,\V, \overline \E_j)   \leq 0.\end{equation}
Although we did not use this stronger inequality here, it will play an important role in Corollary \ref{NegativecostcurvedKahlerEinstein}.
\end{proof}

From here, it is possible to follow the rest of Shi's argument for Theorem 5.3 verbatim without making any further changes. For brevity, we refer to that paper for the details.

\section{Geometric Consequences}
In this section, we provide several geometric consequences of Theorem \ref{Main theorem}.

\subsection{Positive time control in terms of the orthogonal anti-bisectional trace curvature}
We first show that if we start with a metric of negative anti-bisectional curvature, we have control of the holomorphic sectional curvature at positive times solely in terms of the orthogonal anti-bisectional trace curvature.

\begin{corollary*}
If $\mathbb{M}$ has negative anti-bisectional curvature and the scalar curvature at time $t=0$ satisfies $ S \geq -K$ for some $K>0$, then the holomorphic sectional curvature satisfies the estimate
    \begin{equation}
        0 \geq H(\X) \geq - C(n) \left( \frac{n}{t+\frac{n}{K}}+ 2 \mathfrak{O} \right).
    \end{equation}
\end{corollary*}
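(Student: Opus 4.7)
The estimate combines two ingredients: a lower bound on the scalar curvature under K\"ahler--Ricci flow, and a pointwise comparison between the polarized holomorphic sectional curvature, the scalar curvature, and $\mathfrak{O}$. For the first, along the flow one has $\partial_t S = \Delta S + |\mathrm{Ric}|^2 \geq \Delta S + S^2/n$ by Cauchy--Schwarz, so Shi's maximum principle (in the form used in Section \ref{Shimaximumprinciple}) applied to $-1/S$ with the initial datum $S \geq -K$ yields the standard ODE-comparison estimate
\begin{equation*}
    S(x,t) \geq -\frac{n}{t + n/K} \qquad \text{for all } (x,t).
\end{equation*}

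The main conceptual step is the pointwise comparison. Fix a polarized unitary frame $\{\E_i\}$ with $\E_1 = \X$ and decompose $S = \sum_i H(\E_i) + \sum_{i \neq j} R_{i\bar{i} j \bar{j}}$. Each $H(\E_i)$ is non-positive by Theorem \ref{Main theorem} and Definition \ref{antibisectional}. To control the bisectional terms, I would use that each $\E_i + \E_j$ is again polarized, so $H(\E_i + \E_j) \leq 0$; expanding this expression in terms of $R_{i\bar{i} j\bar{j}}$ and the anti-bisectional $R_{i \bar{j} i \bar{j}}$ (after choosing the frame so that the ``mixed'' components $R_{i\bar{i} i \bar{j}}$ vanish) gives a pointwise upper bound on each bisectional $R_{i \bar{i} j \bar{j}}$. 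Summing over $i \neq j$ and substituting into the decomposition of $S$ leads to an inequality of the form
\begin{equation*}
    S \leq \alpha_n \sum_i H(\E_i) - \beta_n\, \mathfrak{O}(\E)
\end{equation*}
with positive dimensional constants $\alpha_n, \beta_n$. Since $H(\X) = H(\E_1) \geq \sum_i H(\E_i)$ (the other summands being non-positive), rearrangement produces the comparison $H(\X) \geq C_1(n)\, S - C_2(n)\, \mathfrak{O}(\E)$. Combining this with the scalar curvature bound and absorbing constants into a single $C(n)$ yields the stated estimate. The upper bound $H(\X) \leq 0$ is immediate from Theorem \ref{Main theorem} upon setting $v=w$ in Definition \ref{antibisectional}.

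\textbf{Main obstacle.} The subtle point is carrying out the polarized comparison with the correct sign in all dimensions. The crude version based on $H(\E_i + \E_j) \leq 0$ alone gives a clean sign when $n=2$ but produces the ``wrong'' sign on $\sum_i H(\E_i)$ once $n \geq 3$, because of the combinatorial factor $(n-1)$ appearing in $\sum_{i \neq j}(H(\E_i) + H(\E_j))$. To handle all dimensions uniformly I would exploit polarized vectors of the form $\cos\theta\,\E_i + \sin\theta\,\E_j$ for a range of $\theta \in \RR$, integrating $H$ against suitable weights in the spirit of the quantitative Berger proposition stated in the excerpt. The $\mathfrak{O}(\E)$ term is then precisely the defect measuring how far the polarized-only non-positivity of $H$ falls short of the full condition $H \leq 0$ on all $(1,0)$-vectors that the quantitative Berger proposition would need in order to be applied directly.
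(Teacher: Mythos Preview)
Your overall architecture matches the paper exactly: Chen's scalar curvature estimate $S \geq -n/(t+n/K)$ along the flow, combined with a quantitative Berger-type inequality $S + 2\mathfrak{O} \leq C(n)\,H(\X)$ for the polarized holomorphic sectional curvature. The upper bound $H(\X)\leq 0$ is indeed immediate from Theorem~\ref{Main theorem}.

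Where your execution diverges from the paper is in the proof of the Berger-type comparison. The discrete frame decomposition you try first has a real problem beyond the combinatorial sign: you cannot in general choose a polarized unitary frame making \emph{all} the mixed components $R_{i\bar{i}i\bar{j}}$ vanish simultaneously---taking $\E_1=\X$ at a minimum of polarized $H$ kills $R_{1\bar{1}1\bar{j}}$ via the first-variation condition, but not the analogous terms for $i\neq 1$. The paper bypasses this entirely by organizing the argument into two clean steps. First, integrate $H(\Z)$ over the \emph{real} sphere $S^{n-1}$ of polarized unit vectors; because the monomial integrals satisfy $\int |\Z_j|^4 = 2\int |\Z_j|^2|\Z_k|^2$ on the real sphere (as opposed to the complex case), the average picks up the $R_{i\bar{j}i\bar{j}}$ terms alongside the bisectional ones, and one obtains directly
\[
\int_{S^{n-1}} H(\Z)\,d\theta(\Z) \;=\; C(n)\bigl(S + 2\mathfrak{O}\bigr).
\]
This is where the $\mathfrak{O}$ term comes from---not as a correction to a frame identity, but as the exact content of the polarized Berger formula. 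Second, the paper shows that the minimum of polarized $H$ is comparable to this average by a two-plane trigonometric analysis: writing $f(\theta)=H(\cos\theta\,\X+\sin\theta\,\X^\perp)$ as a degree-four trigonometric polynomial and using the first- and second-derivative conditions at the minimum, one shows $H(\Y)\leq H(\X)/5$ for all polarized $\Y$ within angle $\pi/12$ of $\X$, hence on a set of definite spherical measure. Your ``fix'' of integrating over $\cos\theta\,\E_i+\sin\theta\,\E_j$ is the germ of this second step, but you should separate it cleanly from the first.

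One minor technical point: on complete non-compact manifolds the scalar curvature lower bound is not simply Shi's maximum principle applied to the ODE $\dot{X}\geq X^2/n$; one needs Chen's localization argument (his Corollary~2.3) to avoid assuming a priori upper bounds on $S$.
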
 
  
  \subsubsection{Chen's Estimate}
To prove this result, we start by using a result of Chen (\cite{Chen} Corollary 2.3 ) to observe that along the Ricci flow, the scalar curvature $S$ satisfies the estimate
  \begin{equation}\label{scalarcurvatureestimate}
      S \geq - \frac{n}{t+\frac{n}{K}}.
  \end{equation} 
  
  Here, $K$ is a lower bound of the scalar curvature at the initial time. For compact manifolds, this inequality is a straightforward application of the maximum principle, but Chen proved it for complete manifolds as well. On its own, this estimate on its own does not give uniform control over the geometry, as one expects the scalar curvature to become very large and positive.
  However, since the polarized holomorphic sectional curvature remains negative, we can use the following lemma (which is a quantitative version of Berger's estimate on the scalar curvature) to prove Corollary \ref{uniformcontrol}.

\subsubsection{A Modified Berger's Lemma}

\begin{lemma}\label{QuantitativeBergers}
Suppose $\mathbb{M}$ is a Kahler manifold with non-positive polarized holomorphic sectional curvature. Then, there exists a constant $C(n)$ depending only on the dimension so that for all polarized unit $(1,0)$ tangent vectors $\X$, the scalar curvature satisfies the inequality
\begin{equation} \label{QuantitativeBergerinequality}
    S+2\mathfrak{O} \leq C(n) (H(\X)).
\end{equation}
\end{lemma}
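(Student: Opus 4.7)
The plan is to use a Berger-type averaging argument adapted to the polarized setting. First, I would fix a point and a polarized unitary frame $\{\E_i\}_{i=1}^n$ and write an arbitrary polarized unit vector as $\X = \sum x^i \E_i$ with real coefficients satisfying $\sum (x^i)^2 = 1$; then $H(\X)$ is the homogeneous quartic $\sum R_{i\bar j k\bar l}\, x^i x^j x^k x^l$.

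Integrating over the polarized unit sphere $S^{n-1}$ and using the standard real-sphere moment
\begin{equation*}
\frac{1}{|S^{n-1}|}\int_{S^{n-1}} x^i x^j x^k x^l \, d\sigma = \frac{\delta^{ij}\delta^{kl}+\delta^{ik}\delta^{jl}+\delta^{il}\delta^{jk}}{n(n+2)},
\end{equation*}
together with the K\"ahler symmetry $R_{i\bar j k\bar l}=R_{k\bar j i\bar l}$, which lets the $\delta^{il}\delta^{jk}$ contraction be recognized as the scalar curvature, I obtain
\begin{equation*}
\frac{1}{|S^{n-1}|}\int_{S^{n-1}} H(\X)\, d\sigma = \frac{2S+\sum_i H(\E_i)+\mathfrak{O}}{n(n+2)}.
\end{equation*}
Since $H\leq 0$ on every polarized unit vector, the average is bounded above by $H_{\max}:=\max_\X H(\X)$, giving a first linear inequality involving $S$, $\mathfrak{O}$, and $\sum_i H(\E_i)$.

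To isolate the specific combination $S+2\mathfrak{O}$, I would supplement this with the pointwise inequalities obtained by testing $H$ at $\tfrac{1}{\sqrt 2}(\E_i\pm\E_j)$. Averaging over the two signs removes the odd cross terms and yields
\begin{equation*}
H(\E_i)+H(\E_j)+4R_{i\bar i j\bar j}+2R_{i\bar j i\bar j}\leq 0
\end{equation*}
for every $i\neq j$. Summing over pairs and using $\sum_{i\neq j}R_{i\bar i j\bar j}=S-\sum_i H(\E_i)$ together with $\sum_{i\neq j}R_{i\bar j i\bar j}=\mathfrak{O}$ produces a second linear relation of the form $(3-n)\sum_i H(\E_i)+2S+\mathfrak{O}\leq 0$.

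Choosing the frame so that $\E_1$ attains $H_{\max}$, so that $H(\E_i)\leq H(\E_1)=H(\X)$ for every $i$, I would combine these two linear inequalities with appropriate non-negative dimensional weights to eliminate the auxiliary quantity $\sum_i H(\E_i)$ and recover $S+2\mathfrak{O}\leq C(n)\, H(\X)$. The main obstacle I anticipate is arranging the coefficient on $\mathfrak{O}$ to come out as exactly $2$: the two averages above produce $2S+\mathfrak{O}$ most naturally, so generating the $S+2\mathfrak{O}$ combination may require a third independent identity, for instance from testing $H$ on three-vector polarized combinations $\tfrac{1}{\sqrt 3}(\E_i+\E_j+\E_k)$, or from a weighted integral of $H(\X)$ against $(x^1)^2$ on the polarized unit sphere, whose sixth-order moment contributes new linear combinations of $R_{i\bar j k\bar l}$ contractions.
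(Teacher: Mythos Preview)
Your averaging step is the same opening move as in the paper: integrate the polarized holomorphic sectional curvature over the real unit sphere and read off a linear combination of $S$, $\mathfrak{O}$ and $\sum_i H(\E_i)$. The coefficient problem you flag (getting $S+2\mathfrak{O}$ rather than $2S+\mathfrak{O}$) is real but secondary.

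The genuine gap is more basic. Every inequality you generate --- the spherical average bounded by $H_{\max}$, the tests at $\tfrac{1}{\sqrt 2}(\E_i\pm\E_j)$, the proposed three-vector tests or weighted moments --- has right-hand side either $0$ or a multiple of $H_{\max}$. But the lemma must hold for \emph{every} polarized unit $\X$, and since $C(n)>0$ and $H\le 0$, the binding case is $\X=\X_{\min}$, the minimizer of $H$. No nonnegative combination of inequalities whose right-hand sides are $0$ or $C\,H_{\max}$ can ever yield $S+2\mathfrak{O}\le C(n)\,H_{\min}$. You have chosen the wrong extremal vector: setting $\E_1=\X_{\max}$ gives an upper bound on $\sum_i H(\E_i)$, which is exactly the direction that cannot help you reach $H_{\min}$ on the right.

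The paper closes this gap with an argument you have not anticipated. It takes $\X=\X_{\min}$ and, for each orthogonal polarized $\X^\perp$, writes $f(\theta)=H(\cos\theta\,\X+\sin\theta\,\X^\perp)$ as a trigonometric polynomial of degree four. The first- and second-derivative conditions at the minimum $\theta=0$, together with $f(0)\le f(\pi/2)$, constrain the Fourier coefficients enough to prove $f(\theta)\le \tfrac{1}{5}H(\X_{\min})$ for all $|\theta|<\pi/12$. Hence $H$ stays below a fixed fraction of $H_{\min}$ on a spherical cap of definite measure around $\X_{\min}$, forcing the \emph{average} itself to be at most $c(n)\,H_{\min}$. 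This step --- converting ``average bounded by the maximum'' into ``average bounded by a multiple of the minimum'' --- is the heart of the proof, and it is what your proposal is missing.
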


Using the polarized unitary basis $\{ \E_i \}$ as before, the scalar curvature is given by
 \begin{equation}
      S = \sum R_{i\overline i j \overline j}
 \end{equation}
 and the orthogonal anti-bisectional trace curvature is given by
 \begin{equation} \mathfrak{O}= \sum_{i \neq j} R_{i \overline j i \overline j}  \end{equation}

We first prove a modified version of Berger's lemma, \cite{Berger} which shows that the scalar curvature is the average of the holomorphic sectional curvatures.
We let $d \theta(\Z)$ be the uniform probability measure on $\mathbb{S}^{n-1} \subset \left( T^{real}_z \Omega \right)^{pol.}$ and consider the integral
 \begin{equation}\label{averagepolarized}
      \int_{|\Z|=1, \Z \in \left( T_{z}^{real} \Omega \right)^{pol.}} R_{i\overline j k \overline \ell} \Z_i \Z_j \Z_k \Z_\ell \, d \theta(\Z). 
 \end{equation}

When we integrate polynomials over the sphere \cite{Folland}, we find that
\begin{equation}
\int_{S^{n-1}}|\Z_j|^4 \hspace{0.3mm}d \theta(\Z)=\frac{ 2 \Gamma[5/2] \, \Gamma[1/2]^{n-2}}{\Gamma[ \frac{3+n}{2}]},\quad j=1,\dots,n,
\end{equation}
and
\begin{equation}
\int_{S^{n-1}}|\Z_j|^2|\Z_k|^2\hspace{0.3mm}d \theta(\Z)=\frac{ 2 \Gamma[3/2]^2 \, \Gamma[1/2]^{n-3}}{\Gamma[ \frac{3+n}{2}]},\quad 1\le j\ne k\le n.
\end{equation}

As such, we have that
\begin{equation}
\int_{S^{n-1}}|\Z_j|^4 \hspace{0.3mm}d \theta(\Z)= 2 \int_{S^{n-1}}|\Z_j|^2|\Z_k|^2\hspace{0.3mm}d \theta(\Z).
\end{equation}

As a result, when we compute \eqref{averagepolarized}, all of the terms vanish except those where the indices appear in pairs (or are all the same). This implies the following:
\begin{eqnarray}
    \int_{|\Z|=1, \Z \in \left( T_{z}^{real} \Omega \right)^{pol.}} R_{i\overline j k \overline \ell} \Z_i \Z_j \Z_k \Z_\ell \, d \theta(\Z) & = & C(n) \sum_{i,j} R_{iijj}( 2\delta_{ij}+(1-\delta_{ij})) \nonumber \\
    & & + 2  C(n) \sum_{i \neq j} R_{i \overline j i \overline j} \nonumber \\
    & = & C(n) \left( S(z) + 2 \sum_{i \neq j} R_{i \overline j i \overline j} \right). \label{Integralformula}
\end{eqnarray}

This is where the terms on the left hand side of Inequality \eqref{QuantitativeBergerinequality} come from.

\subsubsection{Bounding the holomorphic sectional curvature from below}

To finish the proof of lemma \ref{QuantitativeBergers}, it suffices to show inequality \eqref{QuantitativeBergerinequality} for the polarized holomorphic tangent vector $\X$ which minimizes the holomorphic sectional curvature. As such, we must establish that the minimum holomorphic sectional curvature cannot be too much smaller than the average. 

Suppose $\X$ is a polarized vector which minimizes the holomorphic sectional curvature at $z$. Let $\X^\perp$ be a polarized unit holomorphic tangent vector perpendicular to $\X$. Then, we consider the polarized unit holomorphic tangent vector $\X_{\theta} = \cos \theta \X + \sin \theta \X^\perp$.

Define the function 
\begin{equation}\label{definitionoff}
    f(\theta) := R( \X_{\theta}, \overline{ \X_{\theta}}, \X_{\theta}, \overline{ \X_{\theta}}) .
\end{equation}

We write out $f(\theta)$ as a trigonometric polynomial. Since the curvature tensor is linear in each entry, the polynomial must be of the form
\begin{equation}
 f(\theta) = a_0 + a_1 \cos(2 \theta) +a_2 \sin(2 \theta)+ a_3 \cos(4 \theta) +a_4 \sin(4 \theta).   
\end{equation}

Since $f$ is minimized at $\theta = 0$, it follows that
\begin{equation}\label{firstderivf}
    a_2 + 2a_4 = 0,
\end{equation} 
\begin{equation}\label{secondderivf}
     a_1 + 4 a_3 \leq 0, \textrm{ and }
\end{equation}
  \begin{equation} \label{fcomparison}
     a_1 \leq 0. 
  \end{equation}
  The first two inequalities follow from computing the first and second derivatives of $f$. The third follows from the fact that $f(0)\leq f(\pi/2)$, and noting that the only term that changes in the trigonometric polynomial is $a_1$. By possibly switching $\X^\perp$ to $-\X^\perp$, we may assume $a_2 \geq 0$.

We now bound $a_0$ from above by considering $f(\pi/4)=a_0 + a_2 -a_3 $ and $f(\pi_2) = a_0 - a_1 +a_3 $. We consider two cases. 
\begin{enumerate}
    \item $a_1 \geq 3 a_3$, in which case $-a_1 \leq 3 a_3$ and so $a_2 -a_3 \geq -\frac{a_1+a_3}{4}$
    \item $a_1 \leq 3 a_3$, in which case $-a_1+a_3 \geq -\frac{a_1+a_3}{2}$.
\end{enumerate}

Since $f$ is non-positive, in either case we have that $a_0 \leq \frac{a_1 + a_3}{4}$. However, the average holomorphic sectional curvature is simply $a_0$, which shows that the average holomorphic sectional curvature in the $(\X,\X^\perp)$-plane is at most $1/5$ of the minimum holomorphic sectional curvature. 
Furthermore, since 
\begin{equation} a_0 + \frac{\sqrt{2}}{2}a_1 -\left(1-\frac{\sqrt{2}}{2} \right) a_2 = f(-\pi/8) \geq f(0) = a_0 +a_1+a_3, \end{equation}
it necessary follows that
\begin{equation}\label{a2bound} -\left(1-\frac{\sqrt{2}}{2} \right) a_1 -a_3 \geq  \left(1-\frac{\sqrt{2}}{2} \right) a_2.
\end{equation}

By combining \eqref{secondderivf} with \eqref{a2bound} and doing some straightforward numeric calculations, which find that for $|\theta| < \frac{\pi}{12}$, we have that

\begin{eqnarray}
f(\theta) &\leq& f\left(\frac{\pi}{12} \right)\nonumber \\
& = & a_0 + \frac{\sqrt{3}}{2}a_1 +\frac{a_3}{2} + \frac{2-\sqrt{3}}{4}a_2 \nonumber \\
& \leq & a_0 +\frac{\sqrt{3}}{2}a_1 +\frac{a_3}{2} + \frac{1}{14} a_2 \nonumber \\
& \leq & a_0.
\end{eqnarray}

As we established earlier, $a_0 \leq \frac{H(\X)}{5}$. As such, this shows that for any unit holomorphic tangent vector $\Y$ whose angle with $\X$ is less than $\frac{\pi}{12}$, the holomorphic sectional curvature satisfies
\begin{equation}\label{smallangleestimate}
    H(\X) \leq  \frac{ H(\X)}{5}.
\end{equation}

The set of holomorphic vectors whose angle with $\X$ is at most $\pi/12$ has positive measure in $\mathbb{S}^{n-1}$. Combining Equation \eqref{Integralformula} with Inequality \eqref{smallangleestimate}, we establish Corollary \ref{QuantitativeBergers}. \qed
\vspace{3mm}

 Before moving on, let us make several remarks about this result. First, for a metric with negative anti-bisectional curvature, it is possible to control the entire curvature tensor in terms of the polarized holomorphic sectional curvature and $\mathfrak{O}$. As such, this estimate shows that we can control the flow at positive times solely in terms of the function $\mathfrak{O}$.
 
 Second, Lemma \ref{QuantitativeBergers} can be adapted to arbitrary K\"ahler metrics with either non-negative or non-positive holomorphic sectional curvature. For example, we can adapt the proof to obtain the following result.

\begin{proposition*}
Suppose $\mathbb{M}$ is a Kahler manifold with non-negative holomorphic sectional curvature. Then, there exists a constant $C(n)$ depending only on the dimension so that for all unit $(1,0)$ tangent vectors $\X$, the scalar curvature satisfies the inequality
\[ S \geq C(n) H(\X). \]

Furthermore, for metrics with non-positive holomorphic sectional curvature, we obtain a similar result with the final inequality reversed.
\end{proposition*}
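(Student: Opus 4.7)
The plan is to follow the argument of Lemma \ref{QuantitativeBergers} essentially verbatim, with two modifications: we integrate over the full complex unit sphere $\mathbb{S}^{2n-1} \subset T^{(1,0)}_z \mathbb{M}$ in place of the polarized real sphere $\mathbb{S}^{n-1} \subset (T^{real}_z\Omega)^{pol.}$, and we allow the extremizing holomorphic tangent vector to be arbitrary rather than polarized. The first change is what removes the orthogonal anti-bisectional trace term: by $U(1)$-invariance of the integrand $R_{i\overline j k \overline \ell} Z^i \overline{Z^j} Z^k \overline{Z^\ell}$ under $Z \mapsto e^{i\phi} Z$, the only monomials surviving integration over $\mathbb{S}^{2n-1}$ are those whose holomorphic and anti-holomorphic index multisets agree. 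After applying the K\"ahler symmetries of $R$ together with the standard formulas for $\int_{\mathbb{S}^{2n-1}} |Z_j|^4 \, d\theta$ and $\int_{\mathbb{S}^{2n-1}} |Z_j|^2 |Z_k|^2 \, d\theta$, this yields Berger's classical formula
\begin{equation*}
    \int_{\mathbb{S}^{2n-1}} H(\Z)\, d\theta(\Z) = C_n \cdot S(z)
\end{equation*}
for a positive dimensional constant $C_n$, now with \emph{no} $\mathfrak{O}$ contribution.

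Next, in the non-negative HSC case, let $\X$ be a unit $(1,0)$-vector realizing the maximum of $H$ on $\mathbb{S}^{2n-1}$, and pick any unit vector $\Y \in T^{(1,0)}_z \mathbb{M}$ Hermitian orthogonal to $\X$. Since $\cos\theta$ and $\sin\theta$ are real, the function $f(\theta) := H(\cos\theta\, \X + \sin\theta\, \Y)$ is a real trigonometric polynomial of degree at most $4$ of the same form
\begin{equation*}
    f(\theta) = a_0 + a_1 \cos 2\theta + a_2 \sin 2\theta + a_3 \cos 4\theta + a_4 \sin 4\theta
\end{equation*}
as in Lemma \ref{QuantitativeBergers}, where the absence of odd-frequency terms reflects the symmetry $f(\theta + \pi) = f(\theta)$. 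Because $\theta = 0$ is now a maximum, the first-derivative condition $a_2 + 2 a_4 = 0$ still holds, the second-derivative condition becomes $a_1 + 4 a_3 \geq 0$, and the inequality $f(0) \geq f(\pi/2)$ gives $a_1 \geq 0$. Running the same case analysis (splitting on whether $a_1 \geq 3 a_3$ or $a_1 \leq 3 a_3$) and the same numeric estimate at $f(\pi/12)$, with all inequalities reversed, yields a uniform lower bound $f(\theta) \geq c \cdot H(\X)$ on the arc $|\theta| \leq \pi/12$ for some universal constant $c > 0$. Since this holds for every choice of $\Y \perp_{\mathbb{C}} \X$, the set $\{\Z \in \mathbb{S}^{2n-1} : H(\Z) \geq c\, H(\X)\}$ contains a conic neighborhood of $\X$ of positive measure, and integrating against Berger's formula delivers $S \geq C(n) H(\X)$. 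The non-positive HSC case is identical after reversing inequalities throughout (or by applying the non-negative case to $-R$).

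The main point to verify carefully is that the proof of Lemma \ref{QuantitativeBergers} continues to work when the real rotation plane $\operatorname{span}_{\mathbb{R}}(\X,\Y)$ sits inside a genuinely complex $2$-plane rather than inside a polarized real $n$-plane. The $U(1)$-invariance $H(e^{i\phi}\Z) = H(\Z)$ ensures that the choice of phase of $\Y$ does not affect $H$ on the projectivization $\mathbb{CP}^{n-1}$, so the trigonometric polynomial analysis and case work carry over without modification. The only genuine difference is in the dimensional constant $C(n)$, which now reflects integration over $\mathbb{S}^{2n-1}$ rather than $\mathbb{S}^{n-1}$ and the absence of the $\mathfrak{O}$ term in Berger's formula.
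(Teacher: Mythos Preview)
Your proposal is correct and follows precisely the adaptation the paper indicates (the paper does not spell out a separate proof, only remarking that Lemma \ref{QuantitativeBergers} can be adapted). You have correctly identified the two necessary changes: integrating over $\mathbb{S}^{2n-1}$ so that Berger's classical formula recovers $S$ without the $\mathfrak{O}$ term, and running the same trigonometric-polynomial argument with the inequalities reversed at the maximizing vector.
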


  \subsection{ K\"ahler-Einstein metrics with negative cost-curvature}
  
We can also use Theorem \ref{Main theorem} in combination with Tong's result on negatively curved spaces to find tube domains whose K\"ahler-Einstein metrics have negative anti-bisectional curvature. Corollary \ref{NegativeantibisectionalKahlerEinstein} (restated here for convenience) follows immediately from these two results.
  
  \begin{corollary*}
 Given a tube domain which admits a complete metric which of strongly negative holomorphic sectional curvature and anti-bisectional curvature, the unique complete K\"ahler-Einstein metric with scalar curvature -2 also has non-positive anti-bisectional curvature.
\end{corollary*}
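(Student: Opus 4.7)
The plan is to run the normalized K\"ahler-Ricci flow starting from $\omega_0$ and pass to the long-time limit, combining Theorem \ref{Main theorem} (to preserve non-positivity of $\mathfrak{A}$ along the flow) with Tong's convergence theorem (to guarantee that the flow limits onto the desired K\"ahler-Einstein metric in a strong enough topology).

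First, since $\omega_0$ has negatively pinched holomorphic sectional curvature, in particular its full curvature tensor is bounded, so Corollary \ref{ExistenceUniqueness} gives short-time existence, uniqueness, and preservation of the $\mathbb{R}^n$ translation symmetry of $T\Omega$; each time-slice $\omega_t$ is therefore again a K\"ahler-Sasaki metric induced by an evolving convex potential on $\Omega$. The normalized and unnormalized K\"ahler-Ricci flows differ only by a time-dependent rescaling of the metric, under which $\mathfrak{A}$ is rescaled by a positive factor. Since non-positivity of $\mathfrak{A}$ is invariant under such rescaling, Theorem \ref{Main theorem} applies equally well to the normalized flow and gives $\mathfrak{A}_{\omega_t}(\V,\W)\le 0$ for every pair of polarized vectors $\V=v^{pol.}$, $\W=w^{pol.}$ and every $t$ for which the flow exists.

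Next, the negatively pinched hypothesis is exactly what Tong's theorem (recalled in Subsection \ref{Analyticpreliminaries}) requires: the normalized K\"ahler-Ricci flow with $\lambda=2$ then exists for all $t\ge 0$ and converges, smoothly on compact subsets, to a complete K\"ahler-Einstein metric $\omega_\infty$ of scalar curvature $-2$ that is uniformly equivalent to $\omega_0$. The $C^\infty_{\mathrm{loc}}$ convergence lets me pass the pointwise inequality $R_{\omega_t}(\V,\overline{\W},\V,\overline{\W})\le 0$ to the limit for every fixed pair of polarized vectors, which yields $\mathfrak{A}_{\omega_\infty}\le 0$. The uniqueness of the complete K\"ahler-Einstein metric in the prescribed equivalence class then identifies $\omega_\infty$ with the metric named in the statement.

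The main obstacle to verify carefully is that Theorem \ref{Main theorem} is available on the entire interval $[0,\infty)$ rather than only a small initial time-slot: this requires the curvature of $\omega_t$ to remain bounded uniformly in $t$ so that Shi's maximum principle (which underlies Theorem \ref{Main theorem}) continues to apply, and it requires the flow to stay K\"ahler-Sasaki for all time. Both follow from Tong's uniform $C^\infty$ estimates together with Chen--Zhu uniqueness, so the argument closes without further input. The only other subtlety is bookkeeping between the normalized and unnormalized flows, which is harmless for a sign condition on $\mathfrak{A}$.
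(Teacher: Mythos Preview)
Your proposal is correct and follows exactly the approach the paper takes: the paper simply states that this corollary ``follows immediately'' from Theorem \ref{Main theorem} together with Tong's convergence theorem, and you have supplied the routine details (scale-invariance of the sign condition, bounded curvature from negatively pinched holomorphic sectional curvature, persistence of the K\"ahler-Sasaki structure via Chen--Zhu, and passage to the $C^\infty_{\mathrm{loc}}$ limit). There is nothing substantively different between your argument and the paper's.
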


When $\Omega$ is a smooth, bounded and strongly convex domain, it is possible to strengthen this result to show that the polarized holomorphic sectional curvature is negatively pinched.

\begin{corollary*}
Suppose that $\Omega$ is a smooth, bounded, and strongly convex domain and that $T \Omega$ admits a complete K\"ahler metric with negative anti-bisectional curvature and strongly negative holomorphic sectional curvature. Then the complete K\"ahler-Einstein metric also has negative cost-curvature.
\end{corollary*}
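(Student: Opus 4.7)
The plan is to combine Theorem~\ref{Main theorem}, Tong's convergence result, and the sharper inequality~\eqref{strongernegativityofQ}. The non-positive anti-bisectional curvature of the K\"ahler-Einstein limit is already delivered by Corollary~\ref{NegativeantibisectionalKahlerEinstein}, so the essential new content is the strict pinching of the polarized holomorphic sectional curvature of the limit. The strategy is to run the Shi-style maximum principle from Subsection~\ref{Shimaximumprinciple}, but applied to the strengthened test tensor
\[
A(\theta_0)_{i\bar j k \bar l} := \widetilde R_{i\bar j k \bar l} + \theta_0\,\widetilde g_{ik}\widetilde g_{\bar j \bar l}
\]
for a fixed small $\theta_0 > 0$, rather than to the borderline tensor $A(-\varphi)$ whose role in Theorem~\ref{Main theorem} was merely to preserve the non-strict condition.

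The first step is to check that the hypotheses provide an initial margin. Under the bounded, strongly convex assumption on $\Omega$, together with strongly negative ABC and negatively pinched HSC for $\omega_0$, one extracts a uniform $\theta_0 > 0$ such that $A(\theta_0)(\V,\overline\W,\V,\overline\W) \leq 0$ for all polarized vectors at time zero. The pinched HSC controls the diagonal $\V=\W$; the non-positive orthogonal ABC controls the orthogonal case $g(v,w)=0$; and the strong convexity of $\Omega$ furnishes the boundary asymptotics (via Klembeck-type estimates near $\partial\Omega$) needed to make the gap uniform, while the translation symmetry reduces interior uniformity to a compact question on $\Omega$.

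Next, one reruns the Shi-style maximum principle of Subsection~\ref{Shimaximumprinciple} to show that the stronger condition $A(\theta_0) \leq 0$ is preserved along the flow. The null-vector verification is where the sharper estimate~\eqref{strongernegativityofQ} becomes essential: the extra non-negative term $\sum_{i,j} A(\E_i,\overline\W,\E_j,\overline\W)\,A(\V,\overline\E_i,\V,\overline\E_j)$ supplies precisely the margin needed to absorb the additional contribution to the reaction equation for $A(\theta_0)$ coming from the constant correction $\theta_0\,\widetilde g\otimes \widetilde g$, which was absent in the argument for Theorem~\ref{Main theorem}; outside this null-vector step the maximum-principle argument is verbatim. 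By Tong's theorem the normalized K\"ahler-Ricci flow converges in $C^\infty_{\mathrm{loc}}$ to a K\"ahler-Einstein metric $\omega_\infty$, so specializing to $\V=\W=\X$ and passing to the limit yields $H(\X) \leq -\theta_0\,\|\X\|^4$ for polarized unit vectors in $\omega_\infty$, establishing negative cost-curvature.

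The main obstacle is the initial step: producing the uniform constant $\theta_0 > 0$ at $t=0$ from the stated hypotheses. Unlike the diagonal pinching $H(\V) \leq -K\|\V\|^4$, the bi-quadratic pinching $R(\V,\overline\W,\V,\overline\W) \leq -\theta_0\,g(v,w)^2$ is strictly stronger, and upgrading pinched HSC plus non-positive ABC to this form requires careful interplay between the Cauchy--Schwarz obstruction for the off-diagonal entries of $A(\theta_0)$, the compactness afforded by bounded strongly convex $\Omega$, and the boundary asymptotics of $\omega_0$. Once this initial uniform margin is in hand, the parabolic machinery of Theorem~\ref{Main theorem}, strengthened by~\eqref{strongernegativityofQ}, transports the pinching all the way to $\omega_\infty$.
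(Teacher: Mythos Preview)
Your proposal has genuine gaps, and the paper argues quite differently.

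The first gap you yourself flag: producing a uniform $\theta_0>0$ at $t=0$ with $\widetilde R(\V,\overline\W,\V,\overline\W)\le -\theta_0\,g(v,w)^2$ for all polarized $\V,\W$. The hypotheses give pinched polarized HSC (the diagonal $v=w$) and non-positive ABC (the non-strict cone), but nothing prevents the ABC from touching zero at a non-orthogonal pair $(v,w)$, in which case $A(\theta_0)(\V,\overline\W,\V,\overline\W)=\theta_0\,g(v,w)^2>0$ for every $\theta_0>0$. Your appeal to ``Klembeck-type estimates'' does not help: Klembeck's result concerns the Bergman metric, not the arbitrary $\omega_0$ you are given, and no boundary asymptotics for $\omega_0$ are assumed. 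The second gap is the preservation step: you assert that the extra term in~\eqref{strongernegativityofQ} ``supplies precisely the margin'' to handle the correction, but $Q(\widetilde R)=Q(A(\theta_0)-\theta_0\,\widetilde g\otimes\widetilde g)$ generates cross terms linear in $\theta_0$ whose sign you never analyze, and under the \emph{normalized} flow the additional $-\lambda\widetilde R$ term contributes $+\lambda\theta_0\,g(v,w)^2$ at a null pair, with the wrong sign. Neither issue is addressed. That the paper explicitly leaves quantitative pinching of the limit metric open is a strong hint that this route is not known to work.

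The paper bypasses both problems by arguing directly on the K\"ahler--Einstein metric $\omega_{K.E.}$ rather than tracking a quantitative condition along the flow. First, by van Coevering's boundary asymptotics for strictly pseudoconvex domains, $\omega_{K.E.}$ has pinched HSC near $\partial(T\Omega)$, so any failure of negative cost-curvature is confined to a relatively compact $T\Omega^\lambda$. By translation invariance and compactness there would then be a point and a polarized unit vector $\E_1$ with $H(\E_1)=0$. Now run the flow starting at $\omega_{K.E.}$ itself: since it is a fixed point, $\partial_t H(\E_1)=0$; but the evolution equation~\eqref{Curvatureevolutionsimple} combined with the sharp inequality~\eqref{Quadraticdominance} forces $\widetilde R(\E_1,\overline\E_1,\E_i,\overline\E_j)=0$ for all $i,j$, whence the Berger-type averaging gives $\mathrm{Ric}(\E_1,\overline\E_1)=0$, contradicting the Einstein condition with negative scalar curvature. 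The Einstein equation supplies the rigidity; no initial margin and no quantitative preservation are needed.
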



\begin{proof}

From Corollary \ref{NegativeantibisectionalKahlerEinstein}, the K\"ahler-Einstein metric has non-positive anti-bisectional curvature. To show that it has negative cost-curvature, we must show that the polarized holomorphic sectional curvature is strongly negative.

To do so, we note that for tube domains, the K\"ahler-Einstein metric $\omega_{K.E.}$ has negatively pinched holomorphic sectional curvature in a neighborhood of the strictly pseudoconvex boundary points (see \cite{vanCoevering} Proposition 3.5 for details \cite{vanCoevering}). When $\Omega$ is smooth and strongly convex, the entire boundary is strictly pseudoconvex, which shows that the K\"ahler-Einstein metric has negative cost curvature near its boundary.

As a result, when $\Omega$ is bounded, we can find a relatively compact open set $\Omega^\lambda \subset \Omega$ such that the holomorphic sectional curvature has a uniform lower bound on $T( \Omega \backslash \Omega^\lambda)$. For the sake of contradiction, suppose that $\omega_{K.E.}$ does not have negative cost-curvature. Since the holomorphic sectional curvature is strongly negative away from $T \Omega^\lambda$, $\Omega^\lambda$ is relatively compact and the metric is translation-invariant in the fibers, there must be a point $z \in T \Omega^\lambda$ and a polarized vector at $z$ (which we assume without loss of generality is $\E_1$) such that $H(\E_1) = 0$. 

We now consider the K\"ahler-Ricci flow with initial condition $\omega_{K.E.}$. Since $\omega_{K.E.}$ is K\"ahler-Einstein, we must have that \begin{equation}\frac{\partial}{\partial t} H(\E_1) \equiv 0. \end{equation}
On the other hand, since the anti-bisectional curvature of $\omega_{K.E.}$ is non-positive, by Equation \eqref{Curvatureevolutionsimple}, we have that
\begin{eqnarray}
\frac{ \partial} {\partial t} \widetilde H(\E_1) & = & \Delta \widetilde H(\E_1) + 4\widetilde R_{1 \overline 1 p \overline q} \widetilde R_{q \overline p 1 \overline 1} -  2\widetilde R_{1 \overline p 1 \overline q} \widetilde R_{p \overline 1 q \overline 1} \nonumber \\
& = & \Delta \widetilde H(\E_1) - \widetilde R_{1 \overline p 1 \overline q} \widetilde R_{p \overline 1 q \overline 1} \\ 
&  & + (4\widetilde R_{1 \overline 1 p \overline q} \widetilde R_{q \overline p 1 \overline 1} -  \widetilde R_{1 \overline p 1 \overline q} \widetilde R_{p \overline 1 q \overline 1}). \label{lastline}
\end{eqnarray}
From Inequality \eqref{Quadraticdominance}, \eqref{lastline} is non-positive. As such, we have that
\begin{eqnarray}
\frac{ \partial} {\partial t} \widetilde H(\E_1) & \leq & \Delta \widetilde H(\E_1) - \widetilde R_{1 \overline p 1 \overline q} \widetilde R_{p \overline 1 q \overline 1}.
\end{eqnarray}
Since $\Delta \widetilde H(\E_1) \leq 0$, we must have that $ \widetilde R_{1 \overline p 1 \overline q} \widetilde R_{p \overline 1 q \overline 1} \leq 0$ in order for $\frac{ \partial} {\partial t} \widetilde H(\E_1)$ to vanish.
Again appealing to Inequality \eqref{Quadraticdominance}, this implies that
\begin{eqnarray} \label{Vanishingquadratic}
   0 \geq \widetilde  R(\E_1,\overline \E_1, \E_i, \overline \E_j ) \widetilde R(\E_1,\overline \E_1, \E_j, \overline \E_i )
\end{eqnarray}

However, due to the fact that the metric is generated by a convex function on $\Omega$,
$\widetilde R(\E_1,\overline \E_1, \E_i, \overline \E_j )$ is symmetric in $i$ and $j$. As such, Inequality \eqref{Vanishingquadratic} shows that
\begin{equation} 0 \geq \sum_{i,j} |\widetilde R(\E_1,\overline \E_1, \E_i, \overline \E_j )|^2,\end{equation}
 which implies that for all $i$ and $j$,
\begin{equation}\label{vanishingbisectional}
  \widetilde  R(\E_1,\overline \E_1, \E_i, \overline \E_j) \equiv 0.
\end{equation}
As such, the holomorphic bisectional curvature $R( \E_1,\overline \E_1, \Z, \overline \Z)$ vanishes for all $(1,0)$-vectors $\Z$. We can then use the following modified version of Berger's lemma to calculation the Ricci curvature $\textrm{Ric}(\E_1,\overline \E_1)$.

\begin{lemma}\label{BergersRicciformula}
For any (1,0) vector $\X$, the Ricci curvature $Ric( \X,\overline \X)$ is given by the integral
\begin{equation}\label{Ricciasaverage}
   \operatorname{Ric}(\X, \overline \X) = \frac{n-1}{Vol( \mathbb{S}^{2n-3})} \int_{|\Z|=1, \Z \in T_{z}^{(1,0)} \mathbb{M}} R(\X, \overline \X,\Z, \overline \Z )\, d \theta(\Z).
\end{equation}
\end{lemma}

 Combining \eqref{vanishingbisectional} and \eqref{Ricciasaverage}, it follows that $\textrm{Ric}(\E_1,\overline \E_1) =0$, which contradicts the assumption that $\omega_{K.E.}$ is K\"ahler-Einstein with negative scalar curvature.
\end{proof}

As we noted in the introduction, it is possible to prove Corollary \ref{NegativecostcurvedKahlerEinstein} when the base is a compact Hessian manifold, in which case we can use the compactness of $M$ to use the standard tensor maximum principle.
It is of interest to determine whether this result can be sharpened to obtain quantitative estimates on the pinching of the polarized holomorphic sectional curvature. However, at present we are not able to do so.

\subsection{Towards a differential Harnack inequality}
\label{AntibisectionalandRicci}
Although negativity of the anti-bisectional curvature is a strong assumption, its relation to other curvature conditions is not straight-forward. For instance, it does not imply negative Ricci curvature, even in the special case where the potential is $O(n)$-symmetric (which were studied previously by the authors \cite{KhanZhangZheng}).
The lack of straightforward relationship between the anti-bisectional curvature and the Ricci curvature has an important consequence which pertains to differential Harnack estimates.

Differential Harnack estimates are some of the most important results for Ricci flow. The original version of this inequality was proved by Hamilton and gives $C^1$ control over the scalar curvature for positive time whenever the initial metric has positive curvature operator \cite{HamiltonLiYauHarnack}. Cao proved another version for K\"ahler-Ricci flow under the assumption that the bisectional curvature is positive \cite{CaoHarnack}. For metrics of mixed curvature, Perelman found a related inequality \cite{Perelmanentropy} which involves coupling the Ricci flow with a conjugate heat equation.

In order to find such inequalities, the general strategy is to consider a gradient Ricci soliton and find tensor expressions which vanish. Doing so provides candidate tensors which may have a sign along the flow.
The soliton calculations done by Cao and Hamilton use the fact the Ricci curvature has a sign to ensure that their expression is positive for small times, so cannot be used for the anti-bisectional curvature.
It would be of interest to find a similar inequality for metrics with negative anti-bisectional curvature, especially since doing so seems to require a different approach.

\begin{question}
Can we establish a differential Harnack inequality for K\"ahler-Ricci flow for metrics of negative anti-bisectional curvature?
\end{question}

\section{Orthogonal anti-bisectional curvature and K\"ahler Ricci Flow}
\label{NOABSection}
We now study the behavior of orthogonal anti-bisectional curvature under K\"ahler-Ricci flow. Recall that in optimal transport, the cost $\Psi(x-y)$ is weakly regular whenever the orthogonal anti-bisectional curvature is non-negative.

\begin{theorem*}
Suppose that $ \Omega \subset \mathbb{R}^2 $ is a convex domain and $\Psi :\Omega \to \mathbb{R}$ is a strongly convex function so that the K\"ahler manifold $(T \Omega, \omega_\Psi)$ 
\begin{enumerate}
    \item is complete,
    \item has bounded curvature, and
    \item has non-negative orthogonal anti-bisectional curvature.
\end{enumerate}
 Then the orthogonal anti-bisectional curvature remains non-negative along K\"ahler-Ricci flow. 
\end{theorem*}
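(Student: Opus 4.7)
The plan follows the same template as the proof of Theorem \ref{Main theorem}. First, apply Uhlenbeck's trick to reduce the curvature evolution to
\[
\frac{\partial}{\partial t}\widetilde R_{i\bar j k\bar\ell} = \Delta \widetilde R_{i\bar j k\bar\ell} + Q(\widetilde R)_{i\bar j k\bar\ell},
\]
with $Q$ as in \eqref{DefinitionofQ}. Next, invoke Shi's maximum principle applied to the auxiliary function
\[
\varphi(z,t) = -\inf\bigl\{\, \widetilde R(\V,\overline \W,\V,\overline \W) : \V = v^{pol.},\ \W = w^{pol.},\ \tilde g(v,v)=\tilde g(w,w)=1,\ \tilde g(v,w)=0 \,\bigr\},
\]
realized as in Subsection \ref{Shimaximumprinciple} via a shifted tensor $A(\theta)$ analogous to \eqref{DefinitionofA} but with the opposite sign convention, so that the non-negativity of $A(-\varphi(z,t))$ on orthogonal unit polarized pairs encodes the running infimum. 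All the machinery carries over except the null-vector check, which is now the crux of the argument: at any $(z,t)$ and any polarized orthonormal pair $(\V,\W)$ for which $\widetilde R(\V,\overline \W,\V,\overline \W) = 0$ realizes the minimum over orthogonal unit polarized pairs, one must verify that $Q(\widetilde R)(\V,\overline \W,\V,\overline \W) \geq 0$.

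The heart of the argument is then a direct algebraic calculation specific to complex dimension two. Choose a polarized unitary frame $\{\E_1,\E_2\}$ with $\V = \E_1$, $\W = \E_2$; because the K\"ahler-Sasaki structure is lifted from a real Hessian potential and the frame is polarized, every component of $\widetilde R$ in this frame is real, and in dimension two the tensor has only six independent components, which we abbreviate
\[
A = \widetilde R_{1\bar 1 1\bar 1},\ B = \widetilde R_{2\bar 2 2\bar 2},\ C = \widetilde R_{1\bar 1 2\bar 2},\ D = \widetilde R_{1\bar 2 1\bar 2},\ E = \widetilde R_{1\bar 1 1\bar 2},\ F = \widetilde R_{1\bar 2 2\bar 2}.
\]
Expanding \eqref{DefinitionofQ} with $i=k=1$, $j=\ell=2$ and $p,q$ summed over $\{1,2\}$, and applying the K\"ahler symmetries, gives
\[
Q(\widetilde R)_{1\bar 2 1\bar 2} = 4(E^2 + 2CD + F^2) - 2(AD + 2EF + DB).
\]
Imposing the constraint $D = 0$ collapses this to
\[
Q(\widetilde R)_{1\bar 2 1\bar 2}\big|_{D=0} = 4(E^2 - EF + F^2) = 4\Bigl[\bigl(E - \tfrac{F}{2}\bigr)^2 + \tfrac{3}{4}F^2\Bigr] \geq 0,
\]
which is exactly the required null-vector inequality. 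A further sharpening, not needed above, comes from the first-order condition obtained by differentiating $\widetilde R_{1(\theta)\bar 2(\theta)1(\theta)\bar 2(\theta)}$ with respect to the SO(2)-rotation parameter of the polarized orthonormal frame; this yields $2(F-E) = 0$, so at the minimum one has $E = F$ and the quadratic simplifies to $4E^2$.

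The main obstacle, and the reason the theorem fails in higher dimensions, is that the above identity is genuinely two-dimensional. When $p,q$ range over $\{1,\dots,n\}$ with $n \geq 3$, the subtracted piece $-2\widetilde R_{1\bar p 1\bar q}\widetilde R_{p\bar 2 q\bar 2}$ acquires uncontrolled cross terms such as $-2\widetilde R_{1\bar 3 1\bar 3}\widetilde R_{3\bar 2 3\bar 2}$: the minimum hypothesis only forces the orthogonal anti-bisectional curvature to vanish on the distinguished pair $(\E_1,\E_2)$, not on unrelated pairs like $(\E_1,\E_3)$, so no compensating structure is available to absorb these terms. Once the two-dimensional null-vector check is in hand, the remaining machinery --- handling the Lipschitz rather than smooth nature of $\varphi$ as an infimum via Hamilton's trick for suprema of smooth families, and verifying the growth hypotheses for Shi's cutoff maximum principle under the completeness and bounded-curvature assumptions --- carries over essentially verbatim from Section \ref{Proofofmaintheorem}.
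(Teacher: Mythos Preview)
Your proposal is correct and follows essentially the same route as the paper: reduce via Uhlenbeck's trick and Shi's maximum principle to a null-vector check, then compute $Q_{1\bar 2 1\bar 2}$ directly in a polarized orthonormal frame in dimension two, arriving at $4(E^2 - EF + F^2)$ once $D = 0$. The paper additionally invokes the first-order condition $E = F$ (its equation \eqref{firstderivativeNOAB}) to simplify this to $4E^2$, whereas you correctly observe that $E^2 - EF + F^2 \geq 0$ already holds for real $E,F$ without that condition --- a minor but pleasant simplification.
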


\begin{proof}

The proof is nearly identical to the proof that non-positive anti-bisectional curvature is preserved, except for the verification of the null-vector condition. As such, we will only provide the details for that step.

As before, we use Uhlenbeck's trick so that the curvature evolves by the equation. 
\begin{eqnarray}
\frac{ \partial} {\partial t} \widetilde R_{i \overline j k \overline \ell} & = & \Delta \widetilde R_{i \overline j k \overline \ell} + 2\widetilde R_{i \overline j p \overline q} \widetilde R_{q \overline p k \overline \ell} + 2\widetilde R_{i \overline \ell p \overline q} \widetilde R_{q \overline p k \overline j}-2 \widetilde R_{i \overline p k \overline q} \widetilde R_{p \overline j q \overline \ell}
\end{eqnarray}

We then define a function
\begin{eqnarray*}
    \varphi(z,t) & \\ =  &\inf \left \{ \theta \in \mathbb{R} ~|~ A_{ \V \overline \W \V \overline \W} (\theta, z,t) \geq 0 \textrm{ for } \V = v^{pol.},\W = w^{pol.}  \textrm{ with } \tilde g(v,w)=0 \right \} 
\end{eqnarray*}
where the tensor $A(\theta, z,t)$ is defined as
\begin{equation} A(\theta)_{i \overline j k \overline \ell} = \widetilde R_{i \overline j k \overline \ell} +  \theta \widetilde g_{ik}  \widetilde g_{\overline j \overline \ell}. \end{equation}

As before, we define the tensor 
\begin{equation}A(z,t) := A(\varphi(z,t),z,t), \end{equation}
which is now positive definite when restricted to lifts of pairs of orthogonal vectors. 

\begin{lemma} Suppose that $\{A_{i \overline j k \overline \ell}\}$ is a tensor with the same symmetries as $\{ \widetilde R_{i \overline j k \overline \ell}\}$.
Suppose that $\tilde g$ is the identity at one point and that $\V = v^{pol.},\W = w^{pol.}$ are vectors with $\tilde g(v,w) = 0$.

Suppose that
\begin{equation}
    A(\V, \overline \W, \V, \overline \W) =0,
\end{equation}  
and that for all other orthogonal pairs of vectors $\xi$ and $\eta$, \begin{equation}
     A(\xi^{pol.}, \overline \eta^{pol.}, \xi^{pol.}, \overline \eta^{pol.}) \geq 0.
\end{equation}

Then, if we let $Q$ be as in \eqref{DefinitionofQ}
\begin{equation} Q(A)(\V, \overline \W, \V, \overline \W) \geq 0.\end{equation}
\end{lemma}

By an affine change of coordinates, we can assume $\V= e_1^{pol.}$ and $\W=e_2^{pol.}$. Note that $e_1 +s e_2 \perp e_2- \overline s e_1$. As such, we have that
\begin{equation}\frac{d}{ds} A( (e_1+s e_2)^{pol.}, \overline{(e_2 -s e_1)^{pol.}}, ( e_1+s e_2)^{pol.}, \overline{(e_2 -s e_1)^{pol.}}) \vert_{s=0} = 0 \end{equation}
Simplifying the expression, we find the following.
\begin{equation} \label{firstderivativeNOAB}
    A_{1 \overline 2 1 \overline 1}= A_{1 \overline 2 2 \overline 2}
\end{equation}

Computing $Q(A)$, we find that \begin{eqnarray}
Q(A)_{1 \overline 2 1 \overline 2} &= & 2   A_{1 \overline 2 p \overline q}   A_{q \overline p 1 \overline 2} +  2 A_{1 \overline 2 p \overline q}   A_{q \overline p 1 \overline 2}-  2 A_{1 \overline p 1 \overline q}   A_{p \overline 2 q \overline 2} \nonumber \\
& = & 4   A_{1 \overline 2 p \overline q}   A_{q \overline p 1 \overline 2} - 2  A_{1 \overline p 1 \overline q}   A_{p \overline 2 q \overline 2} \nonumber \\
& = & 4   A_{1 \overline 2 1 \overline 1}   A_{1 \overline 1 1 \overline 2} + 4   A_{1 \overline 2 2 \overline 2}   A_{2 \overline 2 1 \overline 2} \textrm{ when } p = q\textrm{ (the other terms vanish)} \nonumber  \\
& & -  2 A_{1 \overline 1 1 \overline 2}   A_{1 \overline 2 2 \overline 2} -  2 A_{1 \overline 2 1 \overline 1}   A_{2 \overline 2 1 \overline 2}  \textrm{ when } p \neq q\textrm{ (the other terms vanish)} \nonumber \\
& = & 4  (  A_{1 \overline 1 1 \overline 2})^2. \label{Qidentity}
\end{eqnarray}

Identity \eqref{Qidentity} uses Equation \eqref{firstderivativeNOAB} to simplify the expression. This is non-negative, which completes the proof.

\end{proof}

 For $n \geq 3$, the quadratic $Q$ has terms which are negative definite, and so $Q$ need not have a sign. As such, this result is special to complex dimension two.

\subsection{Applications}
\label{NOABApplications}

From a geometric standpoint, the assumption of non-negative orthogonal anti-bisectional curvature alone does not provide strong control over the geometry of a tube domain. For instance, it is possible to find metrics with non-negative orthogonal anti-bisectional curvature which do not converge to a K\"ahler-Einstein metric under the flow. As such, the natural field of applications for Theorem \ref{SurfaceswithNOAB} is optimal transport rather than complex geometry. 

\subsubsection{Monge's cost $c(x,y) = |x-y|$}
Since K\"ahler-Ricci flow tends to smoothen metrics, one natural application of the flow is to prove regularity estimates for for optimal transport when the cost function is rough. In order to motivate this question and give a concrete example of such a function, consider the cost 
 \begin{equation} \label{Mongecost}
     c_0(x,y) = \|x-y\|,
 \end{equation}
which was used in Monge's original paper on optimal transport \cite{Monge}.

From the perspective of the modern regularity theorem, this function is quite pathological. In particular, it fails to be $C^1$ at the origin and is degenerate elsewhere (in that the kernel of $ c_{i,j}(x,y)$ is non-trivial).
These properties affects the behavior of optimal transport. For instance, the optimal transport with respect to the cost $c_0$ generally fails to be unique. Even more strongly, when we consider the Kantorovich relaxation, it is possible to find optimal couplings which are non-deterministic, in that they are not supported on the graph of a function $T:X \to Y$.

Nevertheless, Monge's cost can be uniformly approximated (in $C^\alpha$) by the costs 
\begin{equation} \label{Mongedeformation}
     c_\epsilon = \sqrt{ \epsilon + \|x-y\|^2},
\end{equation}
which are smooth, non-degenerate, and have non-negative MTW tensor. As a result,
it is possible to develop a type of regularity theory for Monge's cost in terms of the ray-monotone optimal transport map (see Chapter 3 of Santambrogio \cite{Santambrogio} for details). As such, Monge's cost is much better behaved than a generic Lipschitz cost function. 
However, the deformation from $c_0$ to $c_\epsilon$ is somewhat ad hoc from a conceptual standpoint, and it would be preferable to have a canonical way to deform the cost function. For this purpose, K\"ahler-Ricci flow seems to be a natural candidate. 

At present, it is not possible to do this, since Monge's cost corresponds to a ``K\"ahler metric" which is singular at the origin and degenerate elsewhere. Furthermore, this ``metric" is not complete. However, if we can find a K\"ahler-Ricci flow whose initial condition corresponds to the Monge's cost in some weak sense, we would be able to fit Monge's cost into the existing regularity theory. 

\subsubsection{Weak regularity using the K\"ahler-Ricci flow }

At first, it seems that in order to satisfy the MTW(0) condition, a cost must be $C^4$ (in that $\frac{\partial^2}{\partial x^i \partial x^j} \frac{\partial^2}{\partial y^k \partial y^l} c(x,y)$ is well defined). However, there are results to suggest that it is possible to obtain regularity for less smooth cost functions. For instance, Villani showed that the MTW(0) condition for the squared-distance cost is stable under Gromov-Hausdorff convergence \cite{VillaniStability}. Furthermore, Guillen and Kitagawa \cite{GuillenKitagawa} found a synthetic version of the $MTW(0)$ condition known as quantitative quasi-convexity which only requires that the potential be $C^3$. For complete $\Psi$-costs in two dimensions, KR weak regularity (Definition \ref{KRWeakregularity}) provides another version of $MTW(0)$, which is meaningful for even less regular costs. 

 Theorem \ref{SurfaceswithNOAB} shows that for cost functions which are sufficiently smooth, this definition is equivalent to the normal MTW(0) condition. However, it has the advantage of being meaningful whenever the parabolic flow \eqref{Psicostflow} exists and smoothens the convex potential.



It is natural to ask whether KR weak regularity provides a meaningful generalization of weak regularity. In other words, these two definitions are equivalent for costs which are sufficiently smooth. However, unless we can say something about the optimal transport when the cost function is less smooth than $C^4$, there is no reason to use the K\"ahler-Ricci flow to define weak regularity. To address this question, we provide the following result.
 
 \begin{corollary} \label{Weak continuity}
 Let $(X,\mu)$ and $(Y,\nu)$ be probability spaces and $\Psi_0: \Omega \to \mathbb{R}$ be a strongly convex $W^{2,p}$ function (with $p>2$) such that following assumptions hold.
 \begin{enumerate}
 \item $X,~Y$ and $\Omega$ are subsets of $\mathbb{R}^2$.
    \item $X-Y \subset \Omega$.
     \item $\Psi$ induces a complete Hessian metric on $\Omega$.
     \item $X$ and $Y$ are smooth and bounded.
     \item There exists $\epsilon >0$ such that whenever $t <\epsilon$, $X$ and $Y$ are uniformly strongly relatively $c$-convex with respect to the cost $c(x,y) =\Psi_t(x-y)$ where $\Psi_t$ solves the flow \eqref{Psicostflow}. For a definition of relative $c$-convexity, see \cite{MTW} Definition 2.3.
     \item $\Psi_0$ is KR weakly regular (in the sense of Definition \ref{KRWeakregularity}).
     \item The measures $\mu$ and $\nu$ are absolutely continuous with respect to the Lebesgue measure and $\log \left( \frac{d\mu(x)}{d \nu(y)} \right) \in L^\infty( X \times Y) $.
 \end{enumerate}
 
We then fix an open set $X^\lambda$ which is relatively compact in the interior of $X$. Then there is an optimal transport $T_0$ with respect to the cost $c_0(x,y) = \Psi_0(x-y)$. Furthermore, there exists constants $\alpha^\prime \in (0,1)$ and  $C>0$ so that
        \begin{equation} \label{Holderestimate}
            \|T_0\|_{C^{\alpha^\prime}(X^\lambda)} < C.
        \end{equation}  
    
 The constant $\alpha^\prime \in (0,1)$ depends on $p$, the subset $X^\lambda \subset X$ and the $L^\infty$-norm of $\log \left( \frac{d\mu}{d \nu} \right)$. The H\"older norm depends the $W^{2,p}$ norm of $\Psi_0$ on $X-Y$.
 \end{corollary}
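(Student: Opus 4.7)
The plan is to use the flow \eqref{Psicostflow} to regularize $\Psi_0$, apply the classical regularity theory of optimal transport to the resulting family of smooth costs $c_t(x,y) := \Psi_t(x-y)$, and pass to the limit as $t \to 0$. By hypothesis (6) together with Theorem \ref{SurfaceswithNOAB}, the flow produces smooth strongly convex potentials $\Psi_t$ on $\Omega$ whose K\"ahler lifts on $T\Omega$ have non-negative orthogonal anti-bisectional curvature, so by the correspondence recalled in Subsection \ref{Optimaltransportbackground} each cost $c_t$ is smooth, non-degenerate, and satisfies the MTW(0) condition. Since $\Psi_0 \in W^{2,p}$ with $p>2$, Sobolev embedding gives $\Psi_0 \in C^{1,\alpha}(X-Y)$ for some $\alpha \in (0,1)$, and standard parabolic Monge--Amp\`ere theory applied to \eqref{Psicostflow} yields a uniform-in-$t$ bound on $\|\Psi_t\|_{W^{2,p}(X-Y)}$ together with convergence $\Psi_t \to \Psi_0$ in $C^{1,\alpha}_{\mathrm{loc}}(X-Y)$ as $t \to 0$.

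For each small $t>0$, hypotheses (5) and (7) together with MTW(0) place the optimal transport problem for $c_t$ squarely in the setting of Ma--Trudinger--Wang \cite{MTW}, Loeper \cite{Loeper}, and Trudinger--Wang \cite{TW}: there exists a unique continuous optimal map $T_t:X \to Y$ solving the $c_t$-Monge--Amp\`ere equation \eqref{MongeAmpere}, and the associated potential enjoys interior H\"older estimates depending only on $p$, the $L^\infty$ bound on $\log(d\mu/d\nu)$, the uniform strong relative $c$-convexity of $X,Y$, the subset $X^\lambda$, and the $W^{2,p}$ norm of the cost. Because all of these input quantities are uniformly controlled as $t \to 0$, the bound $\|T_t\|_{C^{\alpha'}(X^\lambda)} \leq C$ can be taken with $\alpha'$ and $C$ independent of $t$.

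Arzel\`a--Ascoli then produces a sequence $t_k \downarrow 0$ along which $T_{t_k} \to T_0$ uniformly on $X^\lambda$, with $T_0 \in C^{\alpha'}(X^\lambda)$ satisfying \eqref{Holderestimate}. To identify $T_0$ as an optimal transport for $c_0$, I would lift to the Kantorovich formulation: the graph couplings $\pi_{t_k} := (\mathrm{id} \times T_{t_k})_\sharp \mu$ are tight and Kantorovich-optimal for $c_{t_k}$, and since $c_{t_k} \to c_0$ uniformly on $X \times Y$, standard stability of the Kantorovich problem under uniform convergence of the cost (see e.g.\ Villani \cite{OTON}, Ch.~5) forces any weak subsequential limit to be optimal for $c_0$; uniform convergence of $T_{t_k}$ on $X^\lambda$ then forces this limit to be concentrated on the graph of $T_0$ above $X^\lambda$.

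The hard part is the uniformity of the H\"older estimate in the second step: the classical MTW regularity theory is typically phrased in terms of $C^4$ bounds on the cost, whereas here we control only $\|\Psi_t\|_{W^{2,p}}$ uniformly in $t$. Bridging this gap requires a Caffarelli-type interior $C^{\alpha'}$ estimate for the $c$-Monge--Amp\`ere equation under MTW(0) with merely $W^{2,p}$ cost data, relying on the uniform strong relative $c$-convexity of $X,Y$ furnished by hypothesis (5) to absorb the lost smoothness. Once such an estimate is in hand, the remaining ingredients --- compactness of $\{T_t\}$ and stability of the Kantorovich problem in the cost --- are standard.
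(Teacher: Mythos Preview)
Your outline matches the paper's proof essentially step for step: regularize via the flow, obtain uniform interior H\"older estimates for the approximating maps $T_t$, extract a convergent subsequence, and identify the limit via stability of the Kantorovich problem (the paper cites Villani \cite{OTON}, Corollary 5.20 and Theorem 10.28, together with the observation that the support of the limiting coupling lies in the Kuratowski lim-inf of the graphs).

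The one place where the paper is more specific than you is precisely the step you flag as the ``hard part.'' Rather than invoking MTW/Loeper/Trudinger--Wang (which indeed need $C^4$ control), the paper appeals to Theorem 9.5 of Figalli--Kim--McCann \cite{FigalliKimMcCann}. That result is stated with a $C^2$ bound on the cost, but the paper observes (in a footnote) that the $C^2$ norm enters only through an error term of the form $\|D^2_{xx}c\|_{L^\infty} s^2|v|^2$, which can be replaced by $\|D_x c\|_{C^\alpha} s^{1+\alpha}|v|^{1+\alpha}$; the rest of the Figalli--Kim--McCann argument then goes through, possibly with a smaller final H\"older exponent. So the ``Caffarelli-type estimate under MTW(0) with $W^{2,p}$ cost'' you posit is not a separate new ingredient but a direct modification of \cite{FigalliKimMcCann}.
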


\begin{proof}

Consider the transport $T_t$ which is optimal with respect to the cost $c_t(x,y)=\Psi_t(x-y)$ for $t>0$. When $\Psi_0$ is $C^{2,\alpha}$, $\Psi_t$ will be $C^\infty$ for $t>0$ and bounded in $C^{2,\alpha}$ for small time. By Theorem \ref{SurfaceswithNOAB}, $c_t$ will also satisfy the MTW(0) condition.

Using the fifth assumption, it is possible to obtain a H\"older estimate for the transport $T_t$ on $X^\lambda$ for small times $t$. To do so, we first note that the $W^{2,p}$ estimate on the cost implies a $C^{1,\alpha}$-estimate via Morrey's inequality for some $\alpha >0$. 
Using this, we appeal to Theorem 9.5 of Figalli, Kim, McCann \cite{FigalliKimMcCann} to obtain a uniform H\"older estimate on $T_t$ for $t>0$. It is worth noting that the original proof of these result uses a $C^2$-estimate on $c$. However, the $C^2$-estimate only appears as an error term, so it is straightforward to adapt the proof to only require a $C^{1,\alpha}$-estimate, at the possible expense of weakening the final H\"older exponent\footnote{To apply the Figalli-Kim-McCann argument, we can replace the error term $\left\|D_{x x}^{2} c\right\|_{L^{\infty}(U \times V)} s^{2}|v|^{2}$ by $\left\| D_x c\right\|_{C^\alpha}(X \times Y) s^{1+\alpha}|v|^{1+\alpha}$. To apply the rest of the argument, we must have that $\alpha$ satisfies $\alpha > 1/K$, where $K$ is the constant determined by the dimension $n$ and the $L^\infty$ bound on  $\log \left( \frac{d\mu(x)}{d \nu(y)} \right)$. When $\alpha \leq 1/K$, we can instead set $K^\prime = 2/\alpha$ and repeat the rest of the argument in terms of $K^\prime$. This will decrease the final H\"older exponent, but otherwise leaves the argument unchanged.}
We can find a sequence $t_k \to 0$ so that the transport maps $T_{t_k}$ converge on $X^\lambda$ in $C^{ \alpha^\prime}$ for $ \alpha^\prime<\alpha$ to a limiting function $T_0$.

By the Kantorovich theorem, we know that an optimal $c_0$-coupling between $(X,\mu)$ and $(Y,\nu)$ exists (i.e., a coupling satisfying \eqref{Kantorovichproblem}). This coupling is unique and deterministic, by Theorem 10.28 of Villani \cite{OTON}. As a coupling, it depends continuously on the cost function in the weak-${}^\ast$ topology (see Corollary 5.20 for a precise statement \cite{OTON}). As such, we can pick a sub-sequence so that the transport maps $T_{t_k}$ converge to an optimal coupling $\pi_0$ with respect to $c_0$. However, the support of a $\pi_0$ is contained within the Kuratowski limit inferior of the supports of $T_{t_k}$, so in particular is contained within $T_0$. As such, $T_0$ is the optimal transport with respect to $c_0$ restricted to $X^\lambda$, and is bounded in $C^{ \alpha^\prime}$.
\end{proof}

Corollary \ref{Weak continuity} gives a continuity result for optimal transport when the cost function is $W^{2,p}$ and the flow \eqref{Psicostflow} exists. 
It seems likely that this flow is well defined for convex functions $\Psi_0$ which are $W^{2,p}$, which would allow us to remove the second assumption. For the purpose of comparison, it is worth discussing the corresponding situation for compact K\"ahler manifolds.
We cannot directly apply these results to our situation, but this does give some indication of what might be in our setting. Song and Tian \cite{SongTian} showed that if $\mathbb{M}$ is a compact K\"ahler manifold with K\"ahler form $\omega_0$ and volume form $\Omega$, the K\"ahler-Ricci flow exists, is unique and immediately smoothens the metric whenever the initial data is in the space $PSH(\mathbb{M}, \omega_0, \Omega)_p$ for some $p>1$. Here, $PSH(\mathbb{M}, \omega_0)$ is the space of all quasi-plurisubharmonic functions, which are upper semi-continuous functions on $\mathbb{M}$ satisfying $\omega_0+\sqrt{-1} \partial \bar{\partial} \psi \geq 0$. The space  $PSH(\mathbb{M}, \omega_0, \Omega)_p$ is defined as
 \begin{eqnarray} PSH(\mathbb{M}, \omega_0, \Omega)_p  \nonumber &\\
 = & \left \{  \psi \in PSH(\mathbb{M}, \omega_0) \cap L^\infty(\mathbb{M}) ~|~ \frac{\left(\omega_{0}+\sqrt{-1} \partial \bar{\partial} \psi \right)^{n}}{\Omega} \in L^{p}(\mathbb{M}) \right \}. 
 \end{eqnarray}

In our context, this is roughly equivalent to the Monge-Amp\'ere measure associated with $\Psi_0$ having a Radon-Nikodym derivative with respect to the Lebesgue measure which is $L^p$ for some $p>1$. However, showing that the flow exists and smoothens the metric in the non-compact case requires careful analysis, since it is not possible to use the maximum principle.

\subsubsection{General cost functions}

There are several limitations to Definition \ref{KRWeakregularity}. For instance, we are restricted to considering costs whose associated K\"ahler metric is complete. This restriction does not seem to be fundamental and can be solved by finding boundary conditions for K\"ahler-Ricci flow so that Theorem \ref{SurfaceswithNOAB} still holds.

The other major restriction is that we can only consider costs which are induced by a convex function. At present, this seems to be a fundamental obstacle; the associated geometry for a general cost function is pseudo-Riemannian, not K\"ahlerian \cite{KimMcCann, KimMcCannWarren}. For pseudo-Riemannian metrics, the Ricci flow is no longer weakly parabolic. As a result, we do not have a canonical way to evolve general cost functions. 



\section{Examples} \label{examples}

Initially, one might be concerned that non-positive anti-bisectional curvature is a very strong assumption, and that Theorem \ref{Main theorem} has limited application. In this section, we provide various examples of metrics satisfying this assumption, which suggests that such metrics exist in abundance.

\subsection{Simple examples}

We start by providing two simple examples.

\subsubsection{An example in one dimension}
\label{Simpleexamples}
Consider the one-dimensional domain $ \Omega =\mathbb{R}_{> 0}$ with the potential \begin{equation}
    \Psi(x)= -  \log(x).
\end{equation}
    
The tube domain $T \mathbb{R}_{> 0}$ is the half space $\mathbb{H}$ and the lift $\Psi^h$ induces its standard hyperbolic metric. This is a metric of constant negative holomorphic sectional curvature, so the anti-bisectional curvature is negative (as a Riemann surface, there is no orthogonal anti-bisectional curvature).
    
  We can also use this example to obtain a \textit{compact} Hessian manifold. Namely, if we consider the $\mathbb{Z}$-action $\psi$ on $\mathbb{R}_{> 0}$ given by $\psi(k,  x)=  2^k x$ for $k \in \mathbb{Z}, x \in \mathbb{R}_{> 0}$, we can consider the quotient manifold, which gives an affine structure on the circle which is distinct from the standard one. 
  

    \subsubsection{Two versions of the bidisk}
    Using the previous example, we can find a Hessian manifold whose tube domain is biholomorphically isometric to the bidisk (with its product metric) by considering the domain \begin{equation}
        \Omega = \{ x \in \mathbb{R}^2 ~|~ x_1,x_2 >0 \}
    \end{equation} with Hessian metric \begin{equation}\Psi(x) = - \log(x_1) - \log(x_2)  \end{equation}
    
    However, this potential is not unique. For instance, the potential
   \begin{equation} \Psi = -\log( \cos(x_1) + \cos(x_2)), \end{equation}
    defined on the domain   \begin{equation}\Omega = \{ |x_1| + |x_2| < \pi/2 \} \end{equation}
  also lifts to a metric which is biholomorphically isometric to the bidisk.
  
    This metric has negative holomorphic sectional curvature and non-positive anti-bisectional curvature (so is negatively cost-curved).

  \subsection{Other examples}
    
 We will now provide examples which require more computation, and do not just follow from the basic properties of the anti-bisectional curvature. Since computing the anti-bisectional curvature is fairly involved, we have written a Mathematica notebook which can be used to do the calculation and numerically check for negativity \cite{MTWNotebook}.
    
    \subsubsection{Calabi's example}
   In \cite{Calabi}, Calabi studied the tube domain $T B$ where $B$ is a unit ball in $\mathbb{R}^n$. In particular, he showed that this space admits a complete K\"ahler-Einstein metric which is not homogeneous. He also found a semi-explicit expression for this metric in terms of the solution to an ordinary differential equation.
   
   Using Corollary \ref{NegativecostcurvedKahlerEinstein}, we can show that this metric has negative cost-curvature.
  In order to do so, it is sufficient to find a single metric on this space which has non-positive anti-bisectional curvature and negative holomorphic sectional curvature. For this purpose, we consider the potential
       \begin{equation} \Psi(x) = -\log \left( 1-\sum_{i=1}^n (x_i)^2 \right). \end{equation}

The metric $\omega_\Psi$ becomes asymptotically hyperbolic near the boundary, and there are orthogonal vectors whose anti-bisectional curvature vanishes to third-order as one approaches $\partial T \mathbb{B}$. However, the metric $\omega_\Psi$ is negatively cost-curved. Furthermore, it also has negative holomorphic sectional curvature.  We postpone a derivation of this example to the appendix. 

    \subsubsection{A cone in $\mathbb{R}^2$}
    There are tube domains whose base is non-compact which admit negatively cost-curved metrics. For example, on the cone \begin{equation}
        C  =\{ (x_1,x_2) ~|~ |x_1| > |x_2|\},
    \end{equation} the convex potential
   \begin{equation} \Psi= -\log(x_1^2-x_2^2)  \end{equation}
lifts to a complete metric whose anti-bisectional curvature satisfies the identity
   \begin{equation} 2 \mathfrak{A}(v^{pol.}, (w^\sharp)^{pol.}) = -1 + \sin[2 \theta] \sin[2 \phi], \end{equation} 
where
\begin{eqnarray} v &=& \cos[\theta] \frac{\partial}{ \partial {x_1}} +\sin[\theta] \frac{\partial}{ \partial {x_2}}, \\
 w &=& \sin[\phi] dx_1 -\cos[\phi] dx_2.\end{eqnarray} 

Hessian metrics on convex cones have been previously studied (see, e.g., Chapter 4 of \cite{Shimabook} and \cite{Vinberg}). It would be of interest to find other cones whose tangent bundles admit negatively cost-curved metrics.

\subsection{Surfaces with non-negative orthogonal anti-bisectional curvature}

In recent work of the authors and J. Zhang \cite{KhanZhangZheng}, we studied the problem of finding $O(n)$-symmetric metrics on tube domains whose orthogonal anti-bisectional curvature is non-negative. The motivation for this was to find weakly regular cost functions which only depended on the Euclidean distance between points. In that paper, we showed that for such a metric to be complete, the underlying Hessian manifold must be $\mathbb{R}^n$. We also found several examples, such as the lift of the convex potential
\begin{equation} \Psi(x)= \|x \| - C \log(\|x\|+C ) \textrm{ for } C>0. \end{equation}

\subsection{Solitons and Statistical Mirror Symmetry}
\label{SolitonExample}
In recent work of Zhang and the first named author \cite{ZhangKhan}, we study T-duality in the context of tube domains. We refer to this phenomena as ``statistical mirror symmetry," due to its roots in information geometry.

As a particular example, we show that the Siegel-Jacobi space $\mathbb{H} \times \mathbb{C}$ with its special-affine invariant metric is mirror to the Siegel half-space, which is a space of constant negative holomorphic sectional curvature on the tube domain $T \Omega$ where
\begin{equation} \Omega = \{ (x_1,x_2) ~|~ x_1 > x_2^2\} . \end{equation}

In forthcoming work, we solve the K\"ahler-Ricci flow explicitly on both spaces and show that they remain coupled eternally. 
Furthermore, the Siegel-Jacobi space is a complex two-dimensional K\"ahler-Ricci soliton with non-negative orthogonal bisectional curvature and the Siegel half-space is a negatively cost-curved K\"ahler-Einstein metric (see \cite{ZhangKhan}, Subsubsection 4.4.4). As such, these two flows provide examples which satisfy the curvature conditions studied in this paper.



 \section{Acknowledgements}
 
 The first named author would like to thank Robert McCann, Mizan Khan, and Jun Zhang for some helpful suggestions. He was partially supported by AFOSR Grant FA9550-19-1-0213 (``Brain-Inspired Networks for Multifunctional Intelligent Systems and Aerial Vehicles'', MURI subcontract from UCLA). The second named author is partially supported by a research grant from NSFC with grant number 12071050.

\begin{appendix}
\section{Verification of Calabi's example}

In this section, we verify that the potential $\Psi= \log \left( 1 - \sum_{i=1}^n x_i^2 \right)$ lifts to a negatively cost-curved metric. We start by considering the case in complex dimension two. Let $\Omega$ be the unit ball in $\mathbb{R}^2$ and consider the point $(x^1,x^2) \in \Omega$. Let $(v,w)$ be the vector-covector pair \[ v = \cos(\theta) \frac{\partial}{\partial x_1} + \sin(\theta)  \frac{\partial}{\partial x_2} \] 
 \[ w = \sin(\phi) dx_1 - \cos(\phi) dx_2. \]
 
 Since the potential is rotationally symmetric, we can rotate the space so that $x_2=0$. Calculating the anti-bisectional curvature, we find the following.
 
\begin{eqnarray*} \mathfrak{A}\left( v^{pol.}, \left(w^\sharp \right)^{pol.} \right)  &= &  \cos[2 (\theta - \phi)] -\frac{2 (1 + x_1^2 + 2 x_1^4)}{(1 + x_1^2)^3} +  \frac{
 2 (-x_1^2 + x_1^4) \cos[2 \phi]]}{(1 + x_1^2)^3}  \\
\end{eqnarray*}

We now want to show that this expression is negative. To do so, note that it will be maximized in $\theta$ whenever $\phi = \theta$ (i.e., $g(v,w^\sharp)=0$). As such, we have that
\begin{eqnarray*} \mathfrak{A}\left( v^{pol.}, \left(w^\sharp \right)^{pol.} \right)  & \leq & 1 -\frac{2 (1 + x_1^2 + 2 x_1^4)}{(1 + x_1^2)^3} + \frac{
 2 (-x_1^2 + x_1^4) \cos[2 \phi]]}{(1 + x_1^2)^3}  \\
\end{eqnarray*}

Furthermore, the coefficient in front of the last term is non-positive definite, so the final term is maximized (in $\phi$) when $\phi = \frac{\pi}{2}$. As such, we have that
\begin{eqnarray*} \mathfrak{A}\left( v^{pol.}, \left(w^\sharp \right)^{pol.} \right)  & \leq &  1 + \frac{ -2 (1 + x_1^2 + 2 x_1^4)  + 2 (x_1^2 - x_1^4)}{(1 + x_1^2)^3} \\
& = &   1 + \frac{ -2 (1 +  3 x_1^4)}{(1 + x_1^2)^3}.
\end{eqnarray*}

To show that this is negative, we let $s = x_1^2$ and consider the quantity 
\begin{eqnarray*} (1+s)^3 \mathfrak{A}\left( v^{pol.}, \left(w^\sharp \right)^{pol.} \right)  & \leq &  (1+s)^3  -2 (1 +  3 x_1^4) \\
& = & (s-1)^3 \\
& < & 0 \textrm{ (for }s < 1).
\end{eqnarray*}

This shows that the anti-bisectional curvature is negative. To see that the metric is negatively cost-curved, note that when $v = w^\sharp$,

\[\mathfrak{A}\left( v^{pol.}, \left(w^\sharp \right)^{pol.} \right) =-2 +\frac{(-1 + s) (1 + s^2 - 2 s \cos[2 \theta])}{(1 + s)^3}  \]
which forces the expression to be strongly negative.

To show that the holomorphic sectional curvature is strongly negative, we note that the metric becomes asymptotically hyperbolic near the boundary, so it suffices to verify this property when $s$ is strictly smaller than $1$.
We then consider the non-polarized vector $\X = \partial z_1 + (a+ \sqrt{-1} b) \partial z_2$ and compute its holomorphic sectional curvature.
\begin{equation}
 (1 - s)^4 (1 + s) R(\V, \overline \V,\V, \overline \V)= -2 \left(
  \begin{aligned}
 &3 + 3 s + 9 s^2 + s^3 - 2 b^2 (1 - s)^3 \\
 &+ a^4 (1 - s)^2 (3 + s) + b^4 (1 - s)^2 (3 + s) \\
&+ 2 a^2 (1 - s) \left(-3 - 2 s - 3 s^2 + b^2 (-3 + 2 s + s^2) \right)
       \end{aligned} \right)
\end{equation}

We use Mathematica to find the maximum for this in terms of $a$ and $b$. Doing so we find that
\[ R(\V, \overline \V,\V, \overline \V) \leq -\frac{2(3+3s+9s^2+s^3)}{(s-1)^4(s+1)}. \]

This is strictly negative for $s <1$, so we find that the holomorphic sectional curvature is negative.

\subsection{Negative anti-bisectional curvature in higher dimensions}
In higher dimensions, we can again take advantage of symmetry to rotate so that all the coordinates except $x_1$ vanish, \begin{eqnarray*} v &=& \cos( \theta) \partial x_1 + \sin(\theta) \partial x_2, \textrm{ and } \\
w &=& \sin(\phi) dx_1 - \cos(\phi) \cos(\alpha) d x_2 - \cos(\phi) \sin(\alpha) d x_3. \end{eqnarray*}
In this case, again denoting $x_1^2 = s$ we can express the anti-bisectional curvature in the following way.

\begin{eqnarray*}
(1 + s)^3 \mathfrak{A}\left( v^{pol.}, \left(w^\sharp \right)^{pol.} \right) & = &  - (1 - s) (1 + 2 s + s^2) \cos[\theta]^2 \cos[\phi]^2 \cos[
     \alpha]^2  \\ 
     & & - (3 + 7 s + 5 s^2 + s^3) \cos[\phi]^2 \cos[\alpha]^2 \sin[ \theta]^2 \\
     & &- (3 + 3 s + 9 s^2 + s^3) \cos[\theta]^2 \sin[
     \phi]^2 \\
     & & - (1 - s) (1 - 2 s + s^2) \sin[\theta]^2 \sin[
     \phi]^2 \\
    & &  + (1 + s)^3 \cos[\alpha] \sin[2 \theta] \sin[
     2 \phi] \\
     & & - (1 - s) (1 + 2 s + s^2) \cos[\theta]^2 \cos[\phi]^2 \sin[
     \alpha]^2 \\
     & & - (1 - s) (1 + 2 s + s^2) \cos[\phi]^2 \sin[\theta]^2 \sin[\alpha]^2.
     \end{eqnarray*}
     We can simplify this expression using a routine but tedious calculation to find the following.
     \begin{eqnarray*}
 (1 + s)^3 \mathfrak{A}\left( v^{pol.}, \left(w^\sharp \right)^{pol.} \right)   & = &  - (1 - s) (1  + s)^2 \cos[\phi]^2 \\ 
  &  & - (1 - s)^3  \sin[ \phi]^2 \\
 & & -2 (1 + s)^3 \left( \cos[\theta] \sin[\phi] - \cos[\phi] \cos[\alpha] \sin[\theta] \right)^2. 
 \end{eqnarray*}
 
 Doing so, we find that
      \begin{eqnarray*}
 (1 + s)^3 \mathfrak{A}\left( v^{pol.}, \left(w^\sharp \right)^{pol.} \right)  & \leq & -(1-s)^3.
     \end{eqnarray*}
This shows that the metric has negative anti-bisectional curvature. We can then use the same calculation as in the two-dimensional case to show that the holomorphic sectional curvature is strongly negative.

\section{The polarized holomorphic sectional curvature and $\mathfrak{O}$ control the Riemann curvature tensor for metrics with negative anti-bisectional curvature}
\label{PHSCandO}

Here we show that for a metric with negative anti-bisectional curvature, we can control the Riemann curvature tensor in terms of the polarized holomorphic sectional curvatures and the orthogonal anti-bisectional trace curvature.

As before, we work in a polarized unitary frame. Let $s$ be a free parameter and consider
\begin{eqnarray*}
\mathfrak{A}(\E^i, \E^j+t \E^k) & = & \mathfrak{A}(\E^i, \E^j) + 2 s R(\E^i, \overline \E^j,\E^i, \overline \E^k) + s^2 \mathfrak{A}(\E^i, \E^k).
\end{eqnarray*}
The left hand side is non-positive. By our assumptions, we have bounds on the first and third terms, which allows us to obtain two sided bounds on the middle term as well.

Now we consider the quantity
\begin{eqnarray*}
H(\E^i + \E^j) & = & H(\E^i) + 4  R(\E^i, \overline \E^j,\E^i, \overline \E^i) + 4 R(\E^i, \overline \E^i,\E^j, \overline \E^j)  \\
& &+ 2 \mathfrak{A}(\E^i,  \E^j) + 4  R(\E^i, \overline \E^j,\E^j, \overline \E^j) + H(\E^i).
\end{eqnarray*}

Again, the left hand side is negative and bounded from below. From the previous estimate (setting $i=j$) and our assumption, we have two sided bounds on all the terms except for $4 R(\E^i, \overline \E^i,\E^j, \overline \E^j)$. Rearranging the expression, we obtain two-sided bounds on this term.

Now we consider the expression
\begin{eqnarray*}
\mathfrak{A}(\E^i +t \E^j, \E^i+s \E^k) & = & 2 ts R(\E^i, \overline \E^i, \E^j, \overline \E^k) + \textrm{bounded terms},
\end{eqnarray*}
where both $s$ and $t$ are free parameters.
The left hand side is negative, so we derive a two-sided bound on the first term by choosing the parameters appropriately.

Finally, we consider
\begin{eqnarray*}
\mathfrak{A}(\E^i +t \E^k, \E^j+s \E^\ell) & = & 2 ts R(\E^i, \overline \E^j, \E^k, \overline \E^\ell) + \textrm{terms with a repeated index}.
\end{eqnarray*}
Since all the terms with a repeated index have been bounded and the left hand side is non-positive, we can choose $t$ and $s$ to obtain two-sided bounds on the first term on the right hand side. This gives a bound on the curvature in our polarized unitary frame, which shows that the curvature is bounded.


\end{appendix}

\end{document}